\newtheorem{thm}{Theorem}[section]
\newtheorem{cor}[thm]{Corollary}
\newtheorem{lemma}[thm]{Lemma}
\newtheorem{prop}[thm]{Proposition}
\theoremstyle{definition}
\newtheorem{cond}[thm]{Condition}
\numberwithin{equation}{section}
\def\al{\alpha}
\def\be{\beta}
\def\ga{\gamma}
\def\de{\delta}
\def\ep{\varepsilon}
\def\te{\theta}
\def\la{\lambda}
\def\si{\sigma}
\def\vp{\varphi}
\def\Z{\mathbb{Z}}
\def\R{\mathbb{R}}
\def\C{\mathbb{C}}
\def\N{\mathbb{N}}
\def\hf{\frac12}
\def\cC{\mathcal C}
\def\cD{\mathcal D}
\def\cF{\mathcal F}
\def\cH{\mathcal H}
\def\cK{\mathcal K}
\def\cO{\mathcal O}
\newcommand{\Res}[1]{\underset{#1}{\mathrm{Res}}\,}
\newcommand{\rphis}[5]{\,_{#1}\vp_{#2} \left( \genfrac{.}{.}{0pt}{}{#3}{#4}
\ ;#5 \right)}
\begin{document}
\title[Indeterminate moment problem]
{The indeterminate moment problem \\ for the $q$-Meixner polynomials}

\author{Wolter Groenevelt and Erik Koelink}
\address{Technische Universiteit Delft, DIAM, PO Box 5031,
2600 GA Delft, the Netherlands}
\email{w.g.m.groenevelt@tudelft.nl}
\address{Radboud Universiteit, IMAPP, FNWI, Heyendaalseweg 135, 6525 AJ Nijmegen,
the Netherlands}
\email{e.koelink@math.ru.nl}

\begin{abstract} For a class of orthogonal polynomials related to the
$q$-Meixner polynomials corresponding to an indeterminate moment problem
we give a one-parameter family of orthogonality measures. For these
measures we complement the orthogonal polynomials to an orthogonal basis
for the corresponding weighted $L^2$-space explicitly. The result is
proved in two ways; by a spectral decomposition of a suitable operator
and by direct series manipulation.
We discuss extensions to
explicit non-positive
measures and the relation to other indeterminate moment problems for
the continuous $q^{-1}$-Hahn and $q$-Laguerre polynomials.
\end{abstract}

\keywords{Indeterminate moment problem, orthogonal polynomials, $q$-Meixner polynomials}

\maketitle


\section{Introduction}\label{sec:introduction}

Stieltjes \cite{Stie} introduced and studied indeterminate moment problems on the
half-line in connection with continued fractions. Since  Stieltjes' work the study of the
moment problem has flourished and we refer to the book by Akhiezer \cite{Akhi} for
more information as well as to Kjeldsen \cite{Kjel} for an historic overview.
In this paper we study an indeterminate moment problem related to the
$q$-Meixner polynomials, which can be considered as an extension of Stieltjes'
example of an indeterminate moment problem related to the Stieltjes-Wigert
polynomials via the $q$-Laguerre polynomials, see the scheme \cite[p.24]{Chri}.
We give a one-parameter family of orthogonality measures whose support is
contained in the half-line $[-1,\infty)$
such that the $q$-Meixner polynomials are orthogonal with respect to these
measures, see Proposition \ref{prop:orthogpolMeixner}. Note however that the
$q$-Meixner polynomials considered here are relabeled $q$-Meixner polynomials as in e.g. \cite{KoekS},
but the conditions on the parameters for orthogonality are mutually exclusive.
From the general theory for the moment problem \cite{Akhi} it is
known that the polynomials are not dense in the corresponding weighted
$L^2$-spaces, and we give an explicit basis for the weighted $L^2$-space
complementing the orthogonal polynomials and the precise result is given in
Corollary \ref{cor:thmLDselfadjointandspectraldecomp}.
We present two proofs of the result. The first proof is based on a spectral
decomposition of a suitable $q$-difference operator $L$, and this proof is
presented in Section \ref{sec:spectraldecomposition}. The second proof
consists of a direct proof, based solely on basic hypergeometric series,
and is given in Section \ref{sec:directproofs}. We discuss an extension to
non-positive orthogonality measures with support not contained in any half-line,
but in this case we do not have completeness statements.

The indeterminate moment problem considered in this paper fits into the
$q$-Askey scheme, and these have been studied by Christiansen \cite{Chri}.
It nearly fits in the $q$-Meixner scheme of \cite{Chri}, but the conditions
on the parameters are different. We discuss some related limit transitions
motivated by the scheme \cite[p.24]{Chri} in Section \ref{sec:limittransitions}.

The method of proof using a spectral decomposition of a suitable
$q$-difference operator is based on the fact that these indeterminate
moment problems are related to orthogonal polynomials in
the $q$-Askey scheme \cite{KoekS}, so that they are also eigenfunctions
to an explicit difference operator. This approach has been used
successfully for indeterminate moment problems for the case
of continuous $q^{-1}$-Hahn polynomials \cite{KoelS},
$q$-Laguerre polynomials \cite{CiccKK}, Stieltjes-Wigert polynomials
\cite{ChriK-JAT}, symmetric Al-Salam--Chihara polynomials \cite{ChriK-CA}.
Usually, the difference operator is well-known, but there are problems
in determining on which Hilbert space of functions the operator
should act. As it turns out, the papers \cite{CiccKK} and \cite{KoelS}
were guided by a suitable interpretation using a quantum group
analogue of $SU(1,1)$. In case of \cite{CiccKK} the interpretation was
related to the spectral decomposition of a suitable element in a
representation of a non-commutative Hopf algebra, and in case of \cite{KoelS} it is related to
the decomposition of the analogue of the Casimir operator. In this paper
the motivation comes again from this quantum group, and we actually
give two new proofs of the self-dual orthogonality relations \cite[Thm.~6.14]{GroeKK} that
arise from the unitarity of the principal unitary series representations of the
quantum group analogue of the normaliser of $SU(1,1)$ in $SL(2,\C)$.
In the group case the orthogonality relations correspond to the
unitarity of the principal unitary series representations of $SU(1,1)$ are the
orthogonality relations of the Meixner-Krawtchouk functions, see
 \cite[\S 6.8.4]{VileK}.
We prove the result of \cite{GroeKK} in a more general setting, since we can also
easily write down more solutions to the moment problem, of which, however,
some are no longer positive.

The contents of the paper are as follows. In Section \ref{sec:qintegralavaluation} we give
a direct proof of the various orthogonality measures for the $q$-Meixner
polynomials and of a related indeterminate moment problem with only finitely many moments.
This is an easy exercise in basic hypergeometric series. In Section \ref{sec:spectraldecomposition}
we present the first proof, whose main results are stated in Theorem \ref{thm:LDselfadjointandspectraldecomp}
and its Corollary \ref{cor:thmLDselfadjointandspectraldecomp} which states the result on the
level of special functions. In Section \ref{sec:directproofs} we present a direct proof of
Corollary \ref{cor:thmLDselfadjointandspectraldecomp}, which actually extends it to a somewhat more
general set of parameters.
In Section \ref{sec:orthorelsonR} we present some direct and indirect proofs related to the
non-positive measures solving the moment problem. Finally, in Section \ref{sec:limittransitions} we discuss briefly relations with other indeterminate moment problems.

\emph{Notation.}
Throughout this paper we assume that $q \in (0,1)$ is fixed. We use standard notations for $q$-shifted factorials, $\te$-functions and basic hypergeometric series from the book by Gasper and Rahman \cite{GaspR}. For $x \in \C$ and $n \in \N \cup \{\infty\}$, $\N=\{0,1,2,3\cdots\}$, the $q$-shifted factorial $(x;q)_n$ is defined by
$(x;q)_n = \prod_{k=0}^{n-1} (1-xq^k)$,
and for $x\not= 0$ the (normalized) Jacobi $\te$-function $\te(x)$ is defined by
$\te(x) = (x,q/x;q)_\infty$.
For products of $q$-shifted factorials and products of $\te$-functions we use the notations
\[
(x_1,x_2,\ldots, x_k;q)_n =\prod_{j=1}^k (x_j;q)_n, \qquad \te(x_1,x_2,\ldots,x_k)=\prod_{j=1}^k \te(x_j).
\]
The basic hypergeometric series $_r\varphi_s$ is defined by
\[
\rphis{r}{s}{x_1,x_2,\ldots, x_r}{y_1,y_2,\ldots,y_s}{q,z} = \sum_{k=0}^\infty \frac{ (x_1,x_2,\ldots,x_r;q)_k }{ (q,y_1,y_2,\ldots,y_s;q)_k } \Big((-1)^k q^{k(k-1)/2} \Big)^{1+s-r} z^k.
\]
From this definition of the $\te$-function it follows that
$\te(x)=\te(q/x)$, $\te(x) =-x\te(qx)$, $\te(x) = -x \te(1/x)$.
We often use these identities without mentioning them. Iterating the second identity gives the $\te$-product identity
\begin{equation} \label{eq:thetaproduct}
\te(xq^k) = (-x)^{-k} q^{-\hf k(k-1)} \te(x), \qquad k \in \Z.
\end{equation}


\section{$q$-integral evaluation and orthogonal polynomials}\label{sec:qintegralavaluation}

In this section we give elementary proofs of some orthogonality relations on the
polynomial level. One for measures with only a finite number of moments, and one
with all moments. The last one corresponds obviously to an indeterminate moment
problem, which is studied in this paper.

The Jackson $q$-integral is defined by
\begin{gather*}
\int_{0}^\al f(x) \, d_qx =  (1-q) \sum_{k=0}^\infty f(\al q^k)\al q^k,\\
\int_\al^\be f(x) \, d_qx =  \int_0^\be f(x) \, d_qx - \int_0^\al f(x) \, d_qx,\\
\int_0^{\infty(\al)} f(x) \, d_qx = (1-q) \sum_{k=-\infty}^\infty f(\al q^k) \al q^k, \\
\int_\be^{\infty(\al)} f(x) \, d_qx = \int_0^{\infty(\al)} f(x) \, d_qx\, +\,
\int_{0}^\be f(x) \, d_qx, \\
\int_{\infty(\be)}^{\infty(\al)} f(x) \, d_qx = \int_0^{\infty(\al)} f(x) \, d_qx\, - \,
\int_0^{\infty(\be)} f(x) \, d_qx,
\end{gather*}
for $\al,\be \in \C\setminus \{0\}$, and $f$ is a function such that the sums converge absolutely,
see \cite[Ch.~1]{GaspR}. Note that $\int_0^{\infty(\al)} f(x) \, d_qx$  is $q$-periodic
in $\al$, and similarly we have that $\int_{\infty(\be)}^{\infty(\al)} f(x) \, d_qx$
is $q$-periodic in both $\al$ and $\be$.
In case $\al=\be q^l$ for $l\in \N$ we consider the
$q$-integral
\begin{equation*}
\int_{\be q^l}^\be f(x) d_qx = (1-q) \sum_{k=0}^{l-1}  f(\be q^k)\be q^k
\end{equation*}
as a finite sum.

\begin{lemma}\label{lem:full2psi2sum} For $|c/ab| <1$ we have
\begin{equation*}
\int_{\infty(t_-)}^{\infty(t_+)}
\frac{ (-qx, -cx;q)_\infty}{(-ax, -bx;q)_\infty}\, d_qx\, = \,
(1-q)\, t_+\,   \frac{(q, c/a, c/b;q)_\infty}{(a,b, c/ab;q)_\infty}
\frac{\te(abt_-t_+, t_-/t_+, a, b)}{\te(-at_+, -at_-,-bt_+,-bt_-)}
\end{equation*}
where $t_\pm\in\C$ so that the denominator of the integrand
has no zeroes at $t_\pm q^\Z$.
\end{lemma}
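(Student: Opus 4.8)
The plan is to compare both sides as meromorphic functions of $t_+$, with $t_-,a,b,c$ fixed, and to pin them down by a Liouville-type argument on the $q$-torus $\C^*/q^\Z$. First I would unfold the $q$-integral: by definition $\int_{\infty(t_-)}^{\infty(t_+)} = \int_0^{\infty(t_+)} - \int_0^{\infty(t_-)}$, and using $(yq^k;q)_\infty = (y;q)_\infty/(y;q)_k$ for all $k\in\Z$ the integrand factors so that
\[
\int_0^{\infty(t)}\frac{(-qx,-cx;q)_\infty}{(-ax,-bx;q)_\infty}\,d_qx = (1-q)\,t\,\frac{(-qt,-ct;q)_\infty}{(-at,-bt;q)_\infty}\sum_{k\in\Z}\frac{(-at,-bt;q)_k}{(-qt,-ct;q)_k}q^k,
\]
a bilateral ${}_2\psi_2$ at argument $q$. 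Since $\int_0^{\infty(t)}$ is $q$-periodic in $t$, the left-hand side of the Lemma is $q$-periodic in $t_+$, and (after a short computation with the $\te$-product identity \eqref{eq:thetaproduct}) so is the right-hand side; I would check in passing that the prefactor $t_+$ and the four $\te$-functions conspire to give periodicity factor $1$. This ${}_2\psi_2$ is not balanced, so rather than summing it directly I pass to a residue comparison.

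The heart of the argument is matching poles and residues. The integrand has simple poles at $x\in -q^\Z/a\cup -q^\Z/b$, so the bilateral sum, as a function of $t_+$, has simple poles exactly on $-q^\Z/a\cup -q^\Z/b$, which is precisely the zero set of $\te(-at_+)\te(-bt_+)$ in the denominator of the right-hand side, while the numerator $\te(abt_-t_+,t_-/t_+)$ is holomorphic on $\C^*$. By $q$-periodicity it suffices to match the residue at $t_+=-1/a$, the point $-1/b$ following by the $a\leftrightarrow b$ symmetry. The subtlety is that all terms with $k\le 0$ contribute to the pole at $t_+=-1/a$. Collecting these residues and simplifying the resulting $q$-shifted factorials via \eqref{eq:thetaproduct} (the Gaussian powers $q^{-\hf j(j+1)}$ cancel), I expect the residue to collapse to a constant times
\[
\tfo{a,\,qa/c}{qa/b}{q,\,c/ab},
\]
which the $q$-Gauss summation \cite{GaspR} evaluates as $\dfrac{(q/b,c/b;q)_\infty}{(qa/b,c/ab;q)_\infty}$; this is exactly where the hypothesis $|c/ab|<1$ enters. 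Comparing with the elementary residue of the $\te$-quotient then yields the matching. This residue bookkeeping, together with recognizing the $q$-Gauss structure hidden inside the sum over $k\le 0$, is the main obstacle.

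Finally I would close with Liouville. Once the residues agree, the difference of the two sides is $q$-periodic in $t_+$ and holomorphic on all of $\C^*$; being bounded on the compact annulus $|q|\le |t_+|\le 1$, it is bounded on $\C^*$ by periodicity, so the singularity at $0$ is removable and the difference is a bounded entire function of $t_+$, hence constant by Liouville's theorem. To identify the constant I set $t_+=t_-$: the left-hand integral is then empty and vanishes, while the right-hand side vanishes because $\te(t_-/t_+)=\te(1)=0$, so the constant is $0$ and the identity follows. Throughout I would take $a,b,c$ generic so that the poles are simple and $-q^\Z/a\ne -q^\Z/b$, the general case following by continuity in the parameters.
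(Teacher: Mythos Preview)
Your approach is sound and genuinely different from the paper's. The paper does not prove this lemma at all: it simply observes that the identity is a reformulation of a known ${}_2\psi_2$-summation formula recorded as Exercise~5.10 in Gasper--Rahman \cite{GaspR} (with a small typo correction), so the whole content is absorbed into a citation. You, by contrast, propose an ab initio proof via an elliptic-function style Liouville argument in the multiplicative variable $t_+$: match the simple poles on $-q^\Z/a\cup -q^\Z/b$, compute the residue at $t_+=-1/a$ by summing the contributions from $k\le 0$ and collapsing them to a $q$-Gauss ${}_2\vp_1$ (this is also where $|c/ab|<1$ is used), and then kill the remaining holomorphic $q$-periodic difference by evaluating at $t_+=t_-$. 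What you gain is a self-contained derivation that exposes exactly why the $\te$-factors on the right are forced; what the paper gains is brevity and a direct link to the standard literature on bilateral summations. Your residue bookkeeping is only sketched, but the structure is correct and the argument would go through for generic parameters, with the general case by continuity as you note.
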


We are only interested in the case $t_-<0$, $t_+>0$, which we assume
from now on. Using \eqref{eq:thetaproduct} we can check that the right hand side
is indeed $q$-periodic in $t_-$ and $t_+$.

Lemma \ref{lem:full2psi2sum} is a just a reformulation of the ${}_2\psi_2$-summation
formula given in \cite[Exerc. 5.10]{GaspR} (with the correction that
$e/ab$ and $q^2f/e$ in the numerator on the left hand side have to be
replaced by $c/qf$ and $q^2f/c$). Note that by fixing $t_-=-1$ we see
that term $(-qx;q)_\infty$ gives zero for $x\in -q^{-\N-1}$ so that this case
leads to
\begin{equation}\label{eq:t-=-1oflemmafull2psi2sum}
\int_{-1}^{\infty(t)}
\frac{ (-qx, -cx;q)_\infty}{(-ax, -bx;q)_\infty}\, d_qx\, = \,
(1-q)\, t\,   \frac{(q, c/a, c/b;q)_\infty}{(a,b, c/ab;q)_\infty}
\frac{\te(-abt, -1/t)}{\te(-at,-bt)}
\end{equation}
for $|c/ab| <1$,
where $t=t_+$ and so that the denominator of the integrand
has no zeroes at $t q^\Z$ and at $-q^\N$.
Note that in the special case $c=-q^{-r}/t$ the numerator is zero at the
points $x=tq^k$, $k<r$, so that it is actually a $q$-integral of the
form $\int_{-1}^{tq^k}$ which can be proved directly using the
non-terminating $q$-Vandermonde summation \cite[(II.23)]{GaspR}. In this
case the restriction as in Lemma \ref{lem:full2psi2sum} is no longer
required, and we are in the case of the orthogonality measure for the
big $q$-Jacobi polynomials, see e.g. \cite[Ch.~7]{GaspR}, \cite{KoekS}.

The special case $c=0$ of Lemma \ref{lem:full2psi2sum} and
\eqref{eq:t-=-1oflemmafull2psi2sum} gives
\begin{equation}\label{eq:c=0oflemmafull2psi2sum}
\int_{\infty(t_-)}^{\infty(t_+)}
\frac{ (-qx ;q)_\infty}{(-ax, -bx;q)_\infty}\, d_qx\, = \,
(1-q)\, t_+\,   \frac{(q ;q)_\infty}{(a,b ;q)_\infty}
\frac{\te(abt_-t_+, t_-/t_+, a,  b)}{\te(-at_+, -at_-,-bt_+,-bt_-)}
\end{equation}
and
\begin{equation}\label{eq:c=0oft-=-1oflemmafull2psi2sum}
\int_{-1}^{\infty(t)}
\frac{ (-qx ;q)_\infty}{(-ax, -bx;q)_\infty}\, d_qx\, = \,
(1-q)\, t\,   \frac{(q;q)_\infty}{(a,b;q)_\infty}
\frac{\te(-abt, -1/t)}{\te(-at,-bt)}
\end{equation}
again assuming the denominator of the integrand
has no zeroes.

The restriction $t_-<0$, $t_+>0$ leads to a discrete
measure with infinite support on the real line $\R$, which
has finitely many moments in case of Lemma \ref{lem:full2psi2sum}
and \eqref{eq:t-=-1oflemmafull2psi2sum}, and where all moments
exist
in case of \eqref{eq:c=0oflemmafull2psi2sum} and
\eqref{eq:c=0oft-=-1oflemmafull2psi2sum}. It is now
straightforward to determine the corresponding orthogonal
polynomials.

\begin{prop}\label{prop:finiteorthogpol}
Define the polynomial
\[
P_n(x; a,b,c; q) \, = \, b^{-n} (b,qb/c;q)_n\, \rphis{3}{2}{q^{-n}, abq/c, -bx}{b, qb/c}{q, q},
\]
then
\begin{gather*}
\int_{\infty(t_-)}^{\infty(t_+)} P_n(x;a,b,c)\, P_m(x;a,b,c)\,
\frac{ (-qx, -cx;q)_\infty}{(-ax, -bx;q)_\infty}\, d_qx\, = \,
\de_{n,m} H_n(a,b,c) I(a,b,c;t_-,t_+), \\
H_n(a,b,c)\, = \, (-c)^{-n} q^{\hf n(n+1)} \frac{(abq^n/c;q)_n}{(abq/c;q)_{2n}}
(a,b, aq/c, bq/c;q)_n
\end{gather*}
for $|c/ab|<q^{n+m}$, and this is in particular true in case
$t_-=-1$, cf. \eqref{eq:t-=-1oflemmafull2psi2sum}. Here
$I(a,b,c;t_-,t_+)$ is the right hand side of the integral in Lemma \ref{lem:full2psi2sum}.
\end{prop}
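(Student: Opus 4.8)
The plan is to prove the orthogonality relations by reducing the $q$-integral of the product of two polynomials to the basic integral evaluation of Lemma \ref{lem:full2psi2sum}. Since both $P_n$ and $P_m$ are polynomials in $x$ of degrees $n$ and $m$, their product is a polynomial of degree $n+m$, and the integrand $(-qx,-cx;q)_\infty/(-ax,-bx;q)_\infty$ is precisely the weight appearing in Lemma \ref{lem:full2psi2sum}. The first step is therefore to observe that it suffices to integrate monomials $x^k$, or more conveniently the building blocks $(-bx;q)_j$, against the weight. The convergence restriction $|c/ab|<q^{n+m}$ is exactly what is needed so that, after multiplying the weight by the polynomial factors, the shifted parameters still satisfy the hypothesis of Lemma \ref{lem:full2psi2sum}.

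Next I would exploit the structure of the ${}_3\varphi_2$. The key identity is that the polynomial factor $(-bx;q)_j$ can be absorbed into the weight: writing
\[
(-bx;q)_j \frac{(-qx,-cx;q)_\infty}{(-ax,-bx;q)_\infty} = \frac{(-qx,-cx;q)_\infty}{(-ax,-bq^jx;q)_\infty},
\]
we see that integrating a single term of the $_3\varphi_2$ against the weight is again an instance of Lemma \ref{lem:full2psi2sum}, now with $b$ replaced by $bq^j$. Thus each term of the series expansion of $P_n$ produces a known closed-form value, and the $q$-integral of $P_n(x;a,b,c)\,x^k$ (or against $(-bx;q)_k$) becomes a finite basic hypergeometric sum in the index $j$ of the $_3\varphi_2$ defining $P_n$. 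The orthogonality then amounts to showing that when $m<n$ this sum, evaluated against any polynomial of degree $m$, vanishes; equivalently, that the lower-degree moments of $P_n$ against the weight are zero.

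The cleanest route for the vanishing is to recognize that the $_3\varphi_2$ defining $P_n$ is a terminating series whose $q$-integral against $(-bx;q)_k$ collapses, via the substitution above and Lemma \ref{lem:full2psi2sum}, into a ratio of $\te$-functions times a terminating ${}_2\varphi_1$ (or ${}_3\varphi_2$) sum in $j$ that can be evaluated by the $q$-Chu--Vandermonde or $q$-Saalsch\"utz summation from \cite{GaspR}. The balancing condition built into the parameters $abq/c$, $b$, $qb/c$ of the $_3\varphi_2$ is designed to make this inner sum Saalsch\"utzian, so that it evaluates to a product that contains the factor $(q^{-n+k};q)$ or similar, forcing the result to vanish for $k<n$ by symmetry and to produce the stated normalization $H_n(a,b,c)$ when $k=n$ (i.e. $m=n$). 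The main obstacle I anticipate is the bookkeeping in this reduction: correctly tracking the $\te$-function prefactors through the parameter shift $b\mapsto bq^j$ using the $\te$-product identity \eqref{eq:thetaproduct}, and verifying that all the $t_\pm$-dependent $\te$-quotients reassemble into the single common factor $I(a,b,c;t_-,t_+)$ independent of $n$ and $k$, so that it can be pulled out and the remaining pure sum identified as a classical summation. Once that prefactor is shown to be constant in $j$, the computation of $H_n$ reduces to evaluating one Saalsch\"utzian sum and simplifying the resulting $q$-shifted factorials.
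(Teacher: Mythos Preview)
Your strategy is the same as the paper's: absorb the polynomial factors into the weight so that each term is again an instance of Lemma~\ref{lem:full2psi2sum}, and then recognize the resulting finite sum as a classical summation. One refinement will make the computation go through cleanly. You propose to test $P_n$ against $(-bx;q)_k$, but the terms of $P_n$ already carry factors $(-bx;q)_j$, and the product $(-bx;q)_j\,(-bx;q)_k$ does \emph{not} equal $(-bx;q)_{j+k}$ (the two products share factors), so the absorption into the weight is not a simple shift $b\mapsto bq^{j+k}$. The paper instead tests $P_n$ against $(-ax;q)_l$; then
\[
(-ax;q)_l\,(-bx;q)_j\,\frac{(-qx,-cx;q)_\infty}{(-ax,-bx;q)_\infty}
=\frac{(-qx,-cx;q)_\infty}{(-aq^lx,-bq^jx;q)_\infty},
\]
which is exactly Lemma~\ref{lem:full2psi2sum} with $(a,b)\mapsto(aq^l,bq^j)$. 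This is legitimate because $P_m$ is symmetric in $a$ and $b$, so $P_m$ is equally well a linear combination of $(-ax;q)_l$, $0\le l\le m$. Your anticipated bookkeeping obstacle then dissolves: using \eqref{eq:thetaproduct} one checks that the ratio $I(aq^l,bq^j,c;t_-,t_+)/I(a,b,c;t_-,t_+)$ is $\te$-free and factors as a product of $q$-shifted factorials in $l$ and $j$ separately. The remaining $j$-sum is only a terminating ${}_2\varphi_1$ with argument $q$, summable by $q$-Vandermonde (not $q$-Saalsch\"utz), and the factor $(q^{l+1-n};q)_n$ it produces gives the vanishing for $l<n$ and the value $H_n(a,b,c)$ for $l=n$.
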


The normalization is chosen so that $P_n(x;a,b, c;q)$ is symmetric in $a$ and $b$,
which can be proved directly using \cite[(3.2.2), (3.2.5)]{GaspR}. The polynomials
are related to the big $q$-Jacobi polynomials, see \cite[\S 7.3]{GaspR}, \cite{KoekS},
but the range of the parameters does not fit the conditions for orthogonality of the big $q$-Jacobi
polynomials.

In case $t_-=-1$ the result is contained in \cite[\S 8]{KoelS} for the part of the discrete
spectrum under the additional assumption $0<a,b<1$, $c<a$, corresponding to the first part
of $S$ in \cite[(8.1)]{KoelS}. In case of arbitrary $t_-<0$ the weight fits into the results
of
\cite{Groe}, and we also find a finite set of orthogonal polynomials, see \cite[\S3-4]{Groe},
which also gives conditions on the parameters for the weight function to be non-negative.
In light of these remarks one can consider these polynomials as $q$-analogues of the
Routh (or Romanovsky) polynomials, see \cite[\S 20.1]{Isma}. Note also that \cite{KoelS}
and \cite{Groe} actually contain different proofs of respectively \eqref{eq:t-=-1oflemmafull2psi2sum}
and Lemma \ref{lem:full2psi2sum} for the restricted parameter sets.

\begin{prop}\label{prop:orthogpolMeixner}
Define the polynomial
\[
m_n(x) \, = \, m_n(x; a,b;q) \, = \, \frac{1}{(a;q)_n}\, \rphis{2}{1}{q^{-n}, -bx}{b}{q, aq^n},
\]
then
\begin{gather*}
\int_{\infty(t_-)}^{\infty(t_+)} m_n(x)\, m_m(x)\,
\frac{ (-qx;q)_\infty}{(-ax, -bx;q)_\infty}\, d_qx\, = \,
\de_{n,m} h_n(a,b) I(a,b;t_-,t_+) \\
h_n(a,b)\, = \, \frac{q^{-n}\, (q;q)_n}{(a,b;q)_n}
\end{gather*}
and this is in particular true in case
$t_-=-1$;
\[
\int_{-1}^{\infty(t)} m_n(x)\, m_m(x)\,
\frac{ (-qx;q)_\infty}{(-ax, -bx;q)_\infty}\, d_qx\, = \,
\de_{n,m} h_n(a,b) I(a,b;t).
\]
Here $I(a,b;t_-,t_+)$, respectively $I(a,b;t)$, denote
the right hand side of \eqref{eq:c=0oflemmafull2psi2sum},
respectively \eqref{eq:c=0oft-=-1oflemmafull2psi2sum}.
\end{prop}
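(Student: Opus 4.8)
The plan is to reduce the whole statement to the single $q$-integral evaluation \eqref{eq:c=0oflemmafull2psi2sum} together with the $\theta$-product identity \eqref{eq:thetaproduct}, so that orthogonality becomes a terminating basic hypergeometric summation. Since $\{(-bx;q)_j\}_{j\ge 0}$ is a basis of the polynomials and $m_n$ is given explicitly in this basis by its ${}_2\varphi_1$, everything hinges on the moments
\[
\mu_m \;=\; \int_{\infty(t_-)}^{\infty(t_+)} (-bx;q)_m\,\frac{(-qx;q)_\infty}{(-ax,-bx;q)_\infty}\,d_qx .
\]
First I would compute $\mu_m$. Using $(-bx;q)_m/(-bx;q)_\infty = 1/(-bxq^m;q)_\infty$, the integrand is exactly the one in \eqref{eq:c=0oflemmafull2psi2sum} with $b$ replaced by $bq^m$, so $\mu_m$ is the right-hand side of \eqref{eq:c=0oflemmafull2psi2sum} under that substitution. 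The only $m$-dependence then sits in $(bq^m;q)_\infty$ and in the four factors $\te(abq^mt_-t_+),\te(bq^m),\te(-bq^mt_+),\te(-bq^mt_-)$; applying \eqref{eq:thetaproduct} to each, all powers of $q$ and all $t_\pm$-factors cancel, leaving the clean formula $\mu_m = (b;q)_m\,a^{-m}\,I(a,b;t_-,t_+)$, where $I(a,b;t_-,t_+)=\mu_0$. Taking $t_-=-1$ in the same way yields the second identity.

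Next I would prove the orthogonality. By symmetry assume $m\le n$; expanding the lower-degree factor $m_m$ through its ${}_2\varphi_1$, it suffices to evaluate $\int m_n(x)(-bx;q)_l\,w(x)\,d_qx$ for $0\le l\le m$, where $w(x)=(-qx;q)_\infty/(-ax,-bx;q)_\infty$. Writing $(-bx;q)_l w$ as the same weight $w^{(l)}$ with $b$ replaced by $bq^l$, and expanding $m_n$ in its ${}_2\varphi_1$, each summand is an integral $\int (-bx;q)_k\,w^{(l)}\,d_qx$. Re-expanding $(-bx;q)_k$ in the shifted Pochhammer basis $\{(-bxq^l;q)_i\}_i$ and using the moment computation above at parameter $bq^l$ (so that $\int (-bxq^l;q)_i\,w^{(l)}=(bq^l;q)_i a^{-i}\mu_l$), each such integral becomes an explicit multiple of $\mu_l$. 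Substituting back collapses the double sum; the vanishing for $l<n$ gives orthogonality, while the surviving $l=n$ term produces the norm. The required collapse is a terminating $q$-Chu--Vandermonde summation (with the diagonal value, matched against the stated normalisation, giving $h_n(a,b)=q^{-n}(q;q)_n/(a,b;q)_n$).

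The main obstacle is precisely the bookkeeping in this last step: the products $(-bx;q)_k(-bx;q)_l$ do \emph{not} combine into a single $q$-shifted factorial, so one must carry out the change of base point in the Pochhammer basis and then pin down the exact $q$-summation that forces the off-diagonal inner products to vanish while reproducing $h_n$ on the diagonal. I would also flag why one cannot shortcut this via Proposition \ref{prop:finiteorthogpol}: although $m_n$ is the formal $c\to 0$ analogue of $P_n$, for $c\neq 0$ the weight has only finitely many moments and the higher ones blow up as $c\to 0$, so the orthogonal polynomials do not converge. Indeed the naive limit of (normalised) $P_n$ is ${}_2\varphi_1(q^{-n},-bx;b;q,aq)$ rather than $m_n$ (whose argument is $aq^n$), and a short check shows this limit is orthogonal to the constants but not to $(-bx;q)_1$. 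Hence the direct computation above is genuinely necessary.
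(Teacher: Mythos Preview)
Your moment computation $\mu_m=(b;q)_m\,a^{-m}\,I(a,b;t_-,t_+)$ is correct, and the overall strategy of reducing everything to \eqref{eq:c=0oflemmafull2psi2sum} together with \eqref{eq:thetaproduct} is exactly right. But the ``main obstacle'' you flag is self-inflicted. You expand $m_n$ in the basis $\{(-bx;q)_k\}$ and then test against $(-bx;q)_l$, which forces you into the awkward products $(-bx;q)_k(-bx;q)_l$ and a change of base point in the Pochhammer basis. The paper avoids this entirely by testing against $(-ax;q)_l$ instead. The weight carries \emph{two} parameters, and
\[
(-ax;q)_l\,(-bx;q)_k\,\frac{(-qx;q)_\infty}{(-ax,-bx;q)_\infty}
\;=\;\frac{(-qx;q)_\infty}{(-aq^lx,-bq^kx;q)_\infty},
\]
so \eqref{eq:c=0oflemmafull2psi2sum} applies directly with $(a,b)\mapsto(aq^l,bq^k)$; no re-expansion is needed. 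A short calculation with \eqref{eq:thetaproduct} gives
\[
\int_{\infty(t_-)}^{\infty(t_+)}(-ax;q)_l(-bx;q)_k\,w(x)\,d_qx \;=\; (a;q)_l(b;q)_k\,a^{-k}b^{-l}q^{-kl}\,I(a,b;t_-,t_+),
\]
and inserting the ${}_2\vp_1$ expansion of $m_n$ collapses the $k$-sum to
$\sum_{k=0}^n\frac{(q^{-n};q)_k}{(q;q)_k}\,q^{(n-l)k}=(q^{-l};q)_n$, which vanishes for $0\le l<n$. This is the terminating $q$-binomial theorem, not $q$-Chu--Vandermonde, and there is no double sum to manage. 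The surviving $l=n$ term, combined with the $a\leftrightarrow b$ symmetry of $m_n$, then yields $h_n(a,b)$. This is precisely the paper's proof: it reruns the argument of Proposition~\ref{prop:finiteorthogpol} at $c=0$, with the $q$-binomial theorem replacing the $q$-Vandermonde summation used there.

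Your final paragraph is therefore beside the point. You are right that the \emph{polynomials} $P_n$ do not converge to $m_n$ as $c\to0$ (the argument $aq$ versus $aq^n$), so one cannot deduce Proposition~\ref{prop:orthogpolMeixner} from the \emph{statement} of Proposition~\ref{prop:finiteorthogpol}. But the paper makes no such attempt: it transfers the \emph{method}, and at $c=0$ that method is in fact simpler.
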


We have defined $m_n$ in such a way that it is symmetric in $a$ and $b$ by
\cite[(III.2)]{GaspR}.

Note that Proposition \ref{prop:finiteorthogpol} deals with
orthogonality for only a finite number of polynomials, whereas
Proposition \ref{prop:orthogpolMeixner} deals with orthogonal
polynomials. Since the polynomials $m_n$ are independent of
$t_\pm$, we see that we have an indeterminate moment problem
in case we have positivity of the measures involved, see
Condition \ref{cond:onparameters}.

The $q$-Meixner polynomials are defined by
\begin{equation}\label{eq:defqMeixnerpols}
M_n(x;b,c;q)\, = \, \rphis{2}{1}{q^{-n}, x}{bq}{q, -\frac{q^{n+1}}{c}},
\end{equation}
see \cite{GaspR}, \cite{KoekS}, which are orthogonal on the set $q^{-\N}$
for $0<b<q^{-1}$, $c>0$. It follows that
\begin{equation}\label{eq:linktoqMeixnerpols}
m_n(x;a,b;q) \, = \, \frac{1}{(a;q)_n}\, M_n(-bx; \frac{b}{q}, -\frac{q}{a}; q)
\end{equation}
and we note that the conditions for parameters of the $q$-Meixner polynomials
translate to $0<b<1$, $a<0$ which does not fit Condition \ref{cond:onparameters}.

\begin{proof}[Proof of Proposition \ref{prop:finiteorthogpol}]
Observe that for $k,l\in\N$ with $|c/ab| <q^{l+k}$ we have
\[
\int_{\infty(t_-)}^{\infty(t_+)}  (-ax;q)_l (-bx;q)_k
\frac{ (-qx, -cx;q)_\infty}{(-ax, -bx;q)_\infty}\, d_qx\, = \,
I(aq^l,bq^k,c;t_-,t_+)
\]
by Lemma \ref{lem:full2psi2sum}. A straightforward calculation
using \eqref{eq:thetaproduct}
gives
\[
\frac{I(aq^l,bq^k,c;t_-,t_+)}{I(a,b,c;t_-,t_+)} \, = \,
\frac{(a, q^{-l}c/a;q)_l}{b^l\, (cq^{-l}/ab;q)_l}
\, \frac{(b, bq/c;q)_k}{(abq^{1+l}/c;q)_k}
\]
so that
\begin{equation*}
\begin{split}
&\int_{\infty(t_-)}^{\infty(t_+)} P_n(x;a,b,c;q)\, (-ax;q)_l\,
\frac{ (-qx, -cx;q)_\infty}{(-ax, -bx;q)_\infty}\, d_qx
\, = \, \\ & I(a,b,c;t_-,t_+)\, b^{-n} (b,qb/c;q)_n\,
 \frac{(a, q^{-l}c/a;q)_l}{b^l\, (cq^{-l}/ab;q)_l}\,
\rphis{2}{1}{q^{-n}, abq/c}{abq^{l+1}/c}{q, q} \,
\end{split}
\end{equation*}
The ${}_2\vp_1$-series can be evaluated by the $q$-Vandermonde
summation formula \cite[(1.5.3)]{GaspR}, giving
$\frac{(q^{l+1-n};q)_n}{(abq^{l+1}/c;q)_n} (\frac{abq^n}{c})^n$
which equals zero for $l<n$. This proves Proposition
\ref{prop:finiteorthogpol} in case $m<n$. The case
$m=n$ follows by taking into account the symmetry of
$P_n$ in $a$ and $b$ and the above calculation.
\end{proof}

\begin{proof}[Proof of Proposition \ref{prop:orthogpolMeixner}]
The proof copies the proof of Proposition \ref{prop:finiteorthogpol}
in the case $c\to 0$ and using the $q$-binomial theorem \cite[(II.4)]{GaspR} instead
of the $q$-Vandermonde summation.
\end{proof}

Note that Proposition \ref{prop:orthogpolMeixner} concerns a
set of orthogonal polynomials for each degree. In case $t_+>0$, $t_-<0$ we
want to see which conditions on $a$ and $b$ lead to a positive measure.
We see that the weight function is positive on $t_+q^\Z$ in case
$a=\bar b$ (assuming $a\in \C\setminus \R$), or $a>0$, $b>0$ or
$a<0$, $b<0$  so that there exists $k_0\in\Z$ with
$q^{k_0}<-at_+< q^{k_0-1}$, $q^{k_0}<-bt_+< q^{k_0-1}$. However,
for general $t_-<0$ it is not possible to have a positive
weight function on $t_-q^\Z$ for the conditions mentioned. In case
$t_-=-1$, however we only have to deal with the positivity of
$\frac{(q^{k+1};q)_\infty}{(aq^k,bq^;q)_\infty}$ for $k\in\N$.
This is the case for $a=\bar b$ (assuming $a\in \C\setminus \R$), or
$a<1$, $b<1$ or if there exists $k_0\in -\N$ with
$q^{k_0}<a< q^{k_0-1}$, $q^{k_0}<b< q^{k_0-1}$.

\begin{cond}\label{cond:onparameters} $t=t_+>0$, $t_-=-1$ and
one of the following conditions on $a$ and $b$ holds:
\begin{enumerate}[(i)]
\item $a = \bar b$, with $a\in \C\setminus \R$;
\item $0<a<1$ and $0<b<1$;
\item for some $k_0\in -\N$ with
$q^{k_0}<a< q^{k_0-1}$ and $q^{k_0}<b< q^{k_0-1}$;
\item for some $k_0\in \Z$ with $q^{k_0}<-at< q^{k_0-1}$ and  $q^{k_0}<-bt< q^{k_0-1}$.
\end{enumerate}
\end{cond}

Condition \ref{cond:onparameters} ensures that the measure in \eqref{eq:c=0oft-=-1oflemmafull2psi2sum}
is non-negative, so that the polynomials $m_n$ are orthogonal with respect to a
positive measure with support contained in  $[-1,\infty)$.
Note that for fixed $t$ these four cases are mutual exclusive. In case (i),
$a,b$ are non-real, in case (ii) and (iii) $a$ and $b$ are positive and in
case (iv) $a$ and $b$ are negative.
Since $t$ is arbitrary, we see that we have an indeterminate
moment problem in the $q$-Askey scheme, and in Christiansen's classification
this fits in the $q$-Meixner class, see \cite[p.~25ff]{Chri}. Note that
from general considerations \cite{Akhi} the polynomials are not dense
in the corresponding weighted $L^2$-space.
We study the case of positive measure
in more detail in Section \ref{sec:spectraldecomposition}, and we
give an alternative direct proof in Section \ref{sec:directproofs}.


\section{Spectral decomposition}\label{sec:spectraldecomposition}

In this section we introduce an operator $L$ which is self-adjoint
for the weighted $L^2$-space corresponding to weight given by
\eqref{eq:c=0oft-=-1oflemmafull2psi2sum} under the positivity
condition of Condition \ref{cond:onparameters}. For this we
follow the strategy employed in \cite{KoelS}, as well as
\cite{KoelLaredo}, so we explicitly determine the spectral decomposition
of a suitable operator having the polynomials of
Proposition \ref{prop:orthogpolMeixner} as eigenfunctions.

We assume that $a$, $b$, $t$  satisfy Condition \ref{cond:onparameters} and
we define the weight function
\begin{equation}\label{eq:defweight}
w(x)\, = \, w(x; a,b; q) \, = \, \frac{ (-qx;q)_\infty}{(-ax, -bx;q)_\infty}.
\end{equation}
We define $\cF_q$ to be the space of complex-valued functions on
$-q^\N \cup tq^\Z$ and the Hilbert space
\begin{equation}\label{eq:defHilbertspace}
\cH_t\, = \, \cH_t(a,b) \, = \, \big\lbrace f\in \cF_q \mid
\int_{-1}^{\infty(t)} |f(x)|^2\, w(x;a,b; q)\, d_qx < \infty \big\rbrace
\end{equation}
with the corresponding inner product.

Define the difference operator for $f\in \cF_q$
\begin{equation} \label{eq:defL}
\begin{split}
(Lf)(x) &\, =\,   A(x)[f(qx)-f(x)]\, +\, B(x)[f(x/q)-f(x)],  \\
A(x)&\, =\, \Big(a+\frac{1}{x}\Big)\Big(b+\frac{1}{x}\Big),
\qquad B(x)\, =\,  \frac{q}{x}\Big(1+\frac{1}{x}\Big).
\end{split}
\end{equation}
Note that $A$ and $B$ are real-valued on $\R$ for $a,b,t$ satisfying Condition
\ref{cond:onparameters}.

At this point we note that the $q$-Meixner polynomials
\eqref{eq:defqMeixnerpols} and their orthogonality relations can be determined
in the same way using the operator $L$, but for a measure supported
on $-bq^{-\N}$. In that case $L$ reduces to the standard second order
difference equation for the $q$-Meixner polynomials \cite{KoekS}.

For  $f\in \cF_q$ we define
\[
\begin{split}
&f(0^+)\, =\, \lim_{k \to \infty}f(tq^k), \qquad
f(0^-)\, =\, \lim_{k \to \infty}f(-q^k), \\
&f'(0^+)\, =\, \lim_{k \to \infty}(D_qf)(tq^k), \
f'(0^-)\, =\, \lim_{k \to \infty}(D_qf)(-q^k),
\end{split}
\]
provided the limits exists and where $D_qf(x) = \frac{f(x)-f(qx)}{(1-q)x}$, $x\not= 0$,
is the $q$-derivative \cite[Ch. 1]{GaspR}.

We can formulate the main result of this section after introducing the notation
\begin{equation} \label{eq:phi=2phi2}
\phi_\ga(x)\, =\, \phi_\ga(x;a,b;q)\, =\, \rphis{2}{2}{-1/x, -1/\ga}{a,b}{q,ab\ga x}.
\end{equation}
for the $q$-Meixner functions.

\begin{thm}\label{thm:LDselfadjointandspectraldecomp} Define
\[
\cD\, =\,  \left\{ f \in \cH_t \ | \ Lf \in \cH_t,\ f(0^-)=f(0^+),\ f'(0^-)= f'(0^+) \right\}
\, \subset\, \cH_t,
\]
then $(L, \cD)$ is self-adjoint, and  the  spectrum of $(L, \cD)$ consists of
\[
\begin{split}
\si(L)\, &=\,  {-ab} \cup \{ -ab(1-q^k) \colon k\in \N\} \cup
\{ -ab(1+q^k/abt) \colon k\in \Z\} \\
\, &= \, \mu(0) \cup \mu(-q^\N) \cup \mu(q^\Z/abt)
\end{split}
\]
where $\mu(\ga) = -ab(1+\ga)$.
Moreover, $\mu(-q^\N) \cup \mu(q^\Z/abt)$ corresponds to the point spectrum
$\si_p(L)$, which is simple. The eigenvector is given by the Meixner
function $\phi_\ga(\cdot) \, = \, \phi_\ga(\cdot;a,b;q)\in \cD$, where
$L\, \phi_\ga \, = \, \mu(\ga)\, \phi_\ga$, $\ga \in -q^\N \cup q^\Z/abt$.
\end{thm}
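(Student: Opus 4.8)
The plan is to realize $(L,\cD)$ as a self-adjoint extension of a symmetric $q$-difference operator and then diagonalise it explicitly, following the strategy of \cite{KoelS,KoelLaredo}. First I would record the Pearson-type detailed-balance identity
\[
A(x)\,w(x)\,x \, = \, B(qx)\,w(qx)\,qx, \qquad x\in tq^\Z\cup -q^\N ,
\]
which follows immediately from $w(x)/w(qx)=(1+qx)/\big((1+ax)(1+bx)\big)$ and the explicit forms of $A,B$, both sides reducing to $(1+qx)/x$. This is exactly what makes $L$ formally symmetric for the inner product of $\cH_t$: a summation by parts turns $\langle Lf,g\rangle-\langle f,Lg\rangle$ into a sum of boundary terms. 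Three of the four boundaries are harmless: at $x=-1$ one has $B(-1)=0$, so $L$ never reaches $-q^{-1}\notin -q^\N\cup tq^\Z$ and the endpoint is regular; and at $x\to+\infty$ (the points $tq^k$, $k\to-\infty$) the endpoint should be shown to be limit point, so that, consistently with $\cD$ carrying no condition there, the boundary contribution of $L^2$-functions vanishes. What survives is a Wronskian-type bilinear form in the values $f(0^\pm),f'(0^\pm)$ and $g(0^\pm),g'(0^\pm)$, and the matching conditions defining $\cD$ are designed precisely so that this form vanishes on $\cD\times\cD$, giving symmetry.

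The structure I would then make explicit is that $L$ does not connect the two chains $tq^\Z$ and $-q^\N$, so it is the direct sum of two Jacobi-type operators coupled only through their common endpoint $x=0$. Near $0$ the leading equation is $\psi(qx)-(1+q)\psi(x)+q\psi(x/q)\approx 0$, whose indicial equation $(u-1)(u-q)=0$ (with $u=q^\rho$) admits both exponents $\rho=0,1$; hence $0^+$ and $0^-$ are limit-circle endpoints, and together with the regular endpoint $-1$ and the limit-point endpoint $+\infty$ the minimal operator has deficiency indices $(2,2)$. The two complex conditions $f(0^-)=f(0^+)$ and $f'(0^-)=f'(0^+)$ then single out a Lagrangian subspace of the boundary form, and by the (Glazman--Krein--Naimark) theory of boundary triples this yields a self-adjoint extension, namely $(L,\cD)$. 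This is the step where I would have to be careful to justify the boundary calculus directly in the discrete $q$-difference setting rather than quote it from the continuous Sturm--Liouville case.

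To locate the spectrum I would verify from the second-order $q$-difference (contiguous) relations for the ${}_2\varphi_2$ that $L\phi_\ga=\mu(\ga)\phi_\ga$ with $\mu(\ga)=-ab(1+\ga)$. Since $\phi_\ga$ is given by a single series that converges and extends continuously through $x=0$ — using $x^k(-1/x;q)_k\to q^{\hf k(k-1)}$ as $x\to 0$, which makes $\lim_{x\to 0}\phi_\ga(x)$ direction-independent, and similarly for the $q$-derivative — the matching conditions at $0$ hold automatically, so $\phi_\ga\in\cD$ reduces to square-integrability, i.e.\ to the behaviour as $x\to+\infty$. Exploiting the self-duality $\phi_\ga(x)=\phi_x(\ga)$ (the upper parameters $-1/x,-1/\ga$ and the argument $ab\ga x$ are symmetric in $x,\ga$, the lower parameters $a,b$ being inert), the admissible spectral values $\ga$ land on the grid dual to $tq^\Z\cup -q^\N$, namely $\ga\in -q^\N\cup q^\Z/abt$: for $\ga=-q^n$ the series terminates (as $-1/\ga=q^{-n}$) and lies trivially in $\cH_t$, while on the non-terminating branch a $\te$-function quasi-periodicity condition forces $\ga\in q^\Z/abt$.

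The main obstacle is the completeness of $\{\phi_\ga\colon\ga\in -q^\N\cup q^\Z/abt\}$ in $\cH_t$, which is equivalent to the spectral statement. I would obtain it either by constructing the resolvent $(L-\la)^{-1}$ as a Green's kernel built from the decaying solution at $+\infty$ and the solution fixed by the boundary data at $0$, and showing that its only poles sit at $\la=\mu(\ga)$ for these $\ga$ with residues the rank-one eigenprojections, or by reading off the orthogonality and the squared norms of the $\phi_\ga$ from the ${}_2\psi_2$-summation behind Lemma \ref{lem:full2psi2sum}. Self-adjointness together with completeness then forces $\si(L)=\overline{\si_p(L)}$; since each eigenspace is one-dimensional (the equation on each chain with its endpoint constraint has a one-dimensional $L^2$ solution space, and the two matching conditions couple them) the point spectrum $\mu(-q^\N)\cup\mu(q^\Z/abt)$ is simple, while the unique accumulation point $\mu(0)=-ab$ — approached as $n\to\infty$ in $-q^n$ and as $k\to\infty$ in $q^k/abt$, but carrying no eigenvector since $\ga=0$ is excluded — is the only remaining part of the spectrum.
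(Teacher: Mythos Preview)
Your plan matches the paper's: formal symmetry via a Casorati boundary term, limit-point at $+\infty$, limit-circle at $0^\pm$ with the matching conditions selecting the self-adjoint extension, and then the spectrum via the Green kernel and residues of the resolvent. The paper executes exactly this programme, following \cite{KoelS}, reading off the spectral measure by Stieltjes--Perron inversion (Proposition~\ref{prop:spectralmeasure}).

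One point to sharpen. In your Green-kernel pairing of ``the decaying solution at $+\infty$'' with ``the solution fixed by the boundary data at $0$'', note that \emph{both} solutions the paper uses lie in the two-dimensional space $V_{\mu(\ga)}$ of eigenfunctions satisfying the matching at $0$ (Lemma~\ref{lem:dimVmu}); the conditions at $0$ therefore do not single out $\phi_\ga$. What does is the eigenvalue equation at the regular endpoint $x=-1$: since $B(-1)=0$, the equation there is a genuine constraint linking $f(-1)$ and $f(-q)$, and $\phi_\ga$ is the unique normalised element of $V_{\mu(\ga)}$ satisfying it (Proposition~\ref{prop:boundarycondforphiat-1}). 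The decaying solution $\Phi_\ga$ is then produced not abstractly but by an explicit connection formula $\Phi_\ga=(a,b;q)_\infty\phi_\ga-c(\ga)\psi_\ga$, and the pole/zero bookkeeping among $c(\ga)$, the auxiliary solution $\psi_\ga$, and the Casorati determinant $D(\Phi_\ga,\phi_\ga)$ (Lemmas~\ref{lem:phiPhiCasoratidet} and \ref{lem:polesK}) is what actually pins the poles of the Green kernel to $-q^\N\cup q^\Z/abt$; this is the main computation your sketch leaves implicit. Finally, the paper excludes $\mu(0)$ from the point spectrum not by arguing that ``$\ga=0$ is excluded'' but as a consequence of completeness and self-duality (end of the proof of Theorem~\ref{thm:LDselfadjointandspectraldecomp}), which is cleaner than trying to analyse $\phi_0$ directly.
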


We prove Theorem \ref{thm:LDselfadjointandspectraldecomp} in this section.
As a corollary to its proof we find the following orthogonality relations.

\begin{cor}\label{cor:thmLDselfadjointandspectraldecomp}
$\{ \phi_\ga(\cdot;a,b;q)\colon \ga \in -q^\N \cup q^\Z/abt\}$ is an
orthogonal basis for $\cH_t$, and the orthogonality relations
\begin{equation*}
\begin{split}
\int_{-1}^{\infty(t)}
\phi_\ga(x;a,b;q) \overline{\phi_\la(x;a,b;q)} \, w(x;a,b;q)\, d_qx\, &= \,
\de_{\ga,\la} \, H_\ga(a,b;q) \, I(a,b;t) \\
H_\ga(a,b;q)\, &= \, \frac{(q, -a\ga, -b\ga;q)_\infty}{|\ga|\, (a,b,-q\ga;q)_\infty}
\end{split}
\end{equation*}
where $I(a,b;t)$ is the right hand side of  \eqref{eq:c=0oft-=-1oflemmafull2psi2sum} and
$\ga,\la \in -q^\N \cup q^\Z/abt$.
\end{cor}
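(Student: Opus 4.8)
The plan is to read off both orthogonality and completeness directly from the spectral decomposition established in Theorem \ref{thm:LDselfadjointandspectraldecomp}, and to obtain the normalization constants $H_\ga$ from the residues of the resolvent of $(L,\cD)$. Since $(L,\cD)$ is self-adjoint, eigenvectors belonging to distinct eigenvalues are orthogonal; because $\ga \mapsto \mu(\ga) = -ab(1+\ga)$ is injective, the Meixner functions $\phi_\ga$ with $\ga \in -q^\N \cup q^\Z/abt$ are pairwise orthogonal in $\cH_t$. For completeness I would use that, by the theorem, $\sigma(L)$ is the closure of the simple point spectrum $\sigma_p(L)=\mu(-q^\N)\cup\mu(q^\Z/abt)$, the only additional point being the accumulation point $-ab=\mu(0)$. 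I would then argue that the spectral measure is purely atomic, supported on $\sigma_p(L)$: the resolvent constructed in the proof of the theorem has only simple poles at the eigenvalues and extends continuously across $-ab$, so $\{-ab\}$ carries no spectral mass. Consequently $\{\phi_\ga\}$ is an orthogonal basis of $\cH_t$.

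For the norms I would split $-q^\N \cup q^\Z/abt$ into its two pieces. When $\ga=-q^n$ with $n\in\N$, the upper parameter $-1/\ga=q^{-n}$ terminates the $_2\vp_2$ in \eqref{eq:phi=2phi2}, so $\phi_{-q^n}$ is a polynomial of degree $n$ in $x$ and an eigenfunction of $L$ for $\mu(-q^n)=ab(q^n-1)$. Since $L$ preserves the degree filtration with distinct diagonal entries $ab(q^n-1)$, the polynomial eigenspace for $\mu(-q^n)$ is one-dimensional, so $\phi_{-q^n}$ is proportional to the $q$-Meixner polynomial $m_n$ of Proposition \ref{prop:orthogpolMeixner}; comparing values at a convenient point fixes the proportionality up to a factor of modulus one. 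Hence $\|\phi_{-q^n}\|^2=h_n(a,b)\,I(a,b;t)$, and a direct simplification of $h_n(a,b)=q^{-n}(q;q)_n/(a,b;q)_n$ using $(aq^n;q)_\infty=(a;q)_\infty/(a;q)_n$ and its analogues shows this equals $H_{-q^n}(a,b;q)\,I(a,b;t)$. Thus the claimed formula holds on the polynomial part with no new computation.

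The substantive work concerns $\ga \in q^\Z/abt$, where $\phi_\ga$ is a genuine nonterminating $_2\vp_2$ and $H_\ga$ is new. Here I would extract $H_\ga$ from the residue of the resolvent at $z=\mu(\ga)$: take $\phi_\ga$ together with a second solution of $(L-\mu(\ga))f=0$ regular at the remaining endpoint, form the Green's kernel as their product divided by the $q$-Wronskian, and express the latter through a $c$-function built from $\te$-functions and $q$-shifted factorials. The poles of the resolvent occur precisely at the zeros of this $c$-function, which reproduce $\sigma_p(L)$, and the residue at $\mu(\ga)$ equals $(H_\ga(a,b;q)\,I(a,b;t))^{-1}$; evaluating it yields the stated $H_\ga$. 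The self-duality of $\phi_\ga$ under $x\leftrightarrow\ga$ can be exploited to organize this computation, as it relates the behaviour of $\phi_\ga$ in the spectral variable to its behaviour in $x$.

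The main obstacle is this residue computation together with the rigorous verification that the spectral measure charges nothing at the accumulation point $-ab$: one must control the $c$-function precisely enough to identify its zeros with the point spectrum and to collapse the residues, given as ratios of $\te$-functions and $q$-shifted factorials, to $H_\ga$. The delicate boundary analysis at the singular point $x=0$, where the two branches $-q^\N$ and $tq^\Z$ of the support meet and the matching conditions $f(0^-)=f(0^+)$, $f'(0^-)=f'(0^+)$ enter, is already absorbed into Theorem \ref{thm:LDselfadjointandspectraldecomp}; granting it, only the explicit residue evaluation remains. As an independent cross-check I would note that Section \ref{sec:directproofs} rederives $H_\ga$ by direct manipulation of basic hypergeometric series.
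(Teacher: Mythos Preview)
Your overall strategy---orthogonality from self-adjointness of $(L,\cD)$, norms from residues of the resolvent---is the paper's strategy, and the residue computation you describe is carried out uniformly in Lemma~\ref{lem:Res(1/D)=w}. Two points are worth flagging.

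First, your completeness argument differs from the paper's and, as stated, has a gap. You claim the resolvent ``extends continuously across $-ab$'', but the resolvent operator cannot be bounded at a spectral point; what is true is that the Green kernel $K_\ga(x,y)$ is analytic at $\ga=0$ for each fixed $x,y$ (Lemma~\ref{lem:polesK}). Turning that into the vanishing of $E(\{\mu(0)\})$ via Stieltjes inversion requires justifying a limit interchange you have not addressed. The paper sidesteps this entirely: it first obtains the orthogonality relations with the explicit $H_\ga$, observes that they are \emph{self-dual} (i.e.\ $H_\ga$ is essentially $(|\ga|w(\ga))^{-1}$), and infers completeness from the dual relations, since the point evaluations already form a basis of $\cH_t$. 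Only afterwards does the paper use this completeness to conclude $\mu(0)\notin\sigma_p(L)$ and finish Theorem~\ref{thm:LDselfadjointandspectraldecomp}. So the logical dependence runs Corollary $\Rightarrow$ last part of Theorem, not the other way around.

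Second, your case split for the norms is unnecessary. The residue in Lemma~\ref{lem:Res(1/D)=w} gives $\|\phi_\ga\|^2=(K_t|\ga|w(\ga))^{-1}$ uniformly for all $\ga\in S_{sing}$, so there is no need to invoke Proposition~\ref{prop:orthogpolMeixner} separately for $\ga\in -q^\N$. (Incidentally, $\phi_{-q^n}=m_n$ exactly by \eqref{eq:qMeixnerfunctionpols}, not merely up to a unimodular factor.) Your cross-check via Section~\ref{sec:directproofs} is apt: that section gives an independent proof by series manipulation.
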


Corollary \ref{cor:thmLDselfadjointandspectraldecomp} gives an independent  proof of
Proposition \ref{prop:orthogpolMeixner} as well as the $q$-integral evaluation
\eqref{eq:c=0oft-=-1oflemmafull2psi2sum} as a special case for $\ga,\la\in -q^\N$,
respectively $\ga=\la=-1$, as follows from
\eqref{eq:qMeixnerfunctionpols}.
Corollary \ref{cor:thmLDselfadjointandspectraldecomp} is also proved in this
section, and a direct proof based on series manipulation is given in
Section \ref{sec:directproofs}, whereas the polynomial case corresponds
to Proposition \ref{prop:orthogpolMeixner}.

Note that Corollary \ref{cor:thmLDselfadjointandspectraldecomp} gives rise
to many solutions of the moment problem corresponding to the orthogonal
polynomials $m_n(\cdot;a,b;q)$, e.g. by varying over $t$, integrating
over $t\in (q,1]$ to get a orthogonality measure which is partially
absolutely continuous or by
multiplying the weight by a suitable $1+C^{-1}\phi_{q^k/abt}(x)>0$,
which can be done if $|\phi_{q^k/abt}(x)|\leq C$ which is the case
for $|q^k/abt|>1$ (by Lemma \ref{lem:asymptoticPhi} and \eqref{eq:defPhi}).
The results of Proposition \ref{prop:orthogpolMeixner} and
Corollary \ref{cor:thmLDselfadjointandspectraldecomp} do not fit
precisely in the $q$-Meixner tableau of the indeterminate $q$-Askey
scheme, see \cite{Chri}, but the Krein parametrization for this case should
follow analogously. It is not clear how to proceed to find the corresponding
Pick function for this solution to the moment problem.

With Condition \ref{cond:onparameters}(i) and $t\in q^\Z$
this corresponds to \cite[Thm.~6.14]{GroeKK}, where $a$ and $b$ are
related to the label of the unitary principal series representation. Since the result corresponds
to the unitarity of the unitary principal series representations, we may view
Corollary \ref{cor:thmLDselfadjointandspectraldecomp} as a $q$-analogue
of the Krawtchouk-Meixner functions, see \cite[\S 6.8.4]{VileK}.

The orthogonality relations of Corollary \ref{cor:thmLDselfadjointandspectraldecomp}
are self-dual, as follows from the fact that $H_\ga$ is essentially $(|\ga|w(\ga))^{-1}$.


\subsection{Self-adjointness}\label{ssec:selfadjointness}

Since the operator $L$ is an unbounded operator on $\cH_t$, we
need to describe a suitable domain. This is described in
Proposition \ref{prop:Lisselfadjoint}.

We consider the truncated inner product
for $l\in \N$, $m,n \in \Z$
\begin{equation}\label{eq:truncatedinnerporduct}
\langle f, g \rangle_{l;m,n} =
 \int_{-1}^{-q^{l+1}} f(x) \overline{g(x)} \, w(x)\, d_qx +
\int_{t_+q^{m+1}}^{t_+q^n} f(x) \overline{g(x)}\, w(x)\, d_qx
\end{equation}
for arbitrary $f,g\in \cF_q$.
Recall the convention that the $q$-integrals are finite sums,
see Section \ref{sec:qintegralavaluation}.
Taking the limits $l\to\infty$ and $m\to -\infty$, $n\to\infty$
gives back the inner product in $\cH_t$ for $f,g\in \cH_t$.

For $f,g\in \cF_q$ we define the Casorati determinant (or the Wronskian) $D(f,g) \in \cF_q$ by
\begin{equation} \label{eq:D(f,g)}
\begin{split}
D(f,g)(x)\, &=\, \Big( f(x) g(qx) - f(qx) g(x)\Big) v(x)\,
=\, \Big((D_qf)(x)g(x) - f(x)(D_qg)(x)\Big)u(x),
\end{split}
\end{equation}
where $D_q$ is the $q$-derivative and
\[
v(x) = \frac{1-q}{x} \frac{ (-qx;q)_\infty }{(-aqx,-bqx;q)_\infty}, \qquad
u(x)=(1-q)x v(x).
\]

\begin{lemma} \label{lem:trinprod(f,g)=D(f,g)}
For $f,g \in \cF_q$ we have
\[
\begin{split}
\langle Lf, g \rangle_{l;m,n} &- \langle f,L g \rangle_{l;m,n}\, = \,
 D(f,\overline{g})(-q^{l})\,  +\,  D(f,\overline{g})(tq^{n-1}) \, -\, D(f,\overline{g})(tq^{m}).
\end{split}
\]
\end{lemma}

Note that Lemma \ref{lem:trinprod(f,g)=D(f,g)} in particular implies that $L$, restricted
to the finitely supported functions $\cH_t$, is a symmetric operator.

\begin{proof}
Using the real-valuedness of $A$ and $B$ on $\R$ we find
\[
\begin{split}
&\Big((Lf)(x)\overline{g(x)} - f(x)\overline{(Lg)(x)}\Big) (1-q)x w(x) \,
= \, A(x) (1-q) x w(x) \Big( f(qx)\overline{g(x)}  - f(x) \overline{g(qx)} \Big)\\
 &\qquad\qquad\qquad -\,  B(x) (1-q)x w(x) \Big( f(x)\overline{g(x/q)} - f(x/q) \overline{g(x)} \Big) \,
= \, D(f,\overline{g})(x/q) - D(f,\overline{g})(x)
\end{split}
\]
for real $x$.
Plugging this into $\langle Lf, g\rangle_{l;m,n} -\langle f, Lg\rangle_{l;m,n}$ we see
that \eqref{eq:truncatedinnerporduct} gives two finite telescoping sums leading to the result.
\end{proof}

Lemma \ref{lem:trinprod(f,g)=D(f,g)} shows that the Casorati determinant plays
an important role in determining a dense domain for $L$ such that we have
a self-adjoint operator.
We observe that
\begin{equation} \label{eq:behaviourwatzero}
w(tq^k) \, = \, 1+ \cO(q^k), \qquad w(-q^k) \, = \, 1+ \cO(q^k), \qquad k\to \infty
\end{equation}
and, using the theta-product identity \eqref{eq:thetaproduct},
\begin{equation} \label{eq:behaviourwatinfty}
w(tq^k) \, = \,
\frac{ \te(-tq) }{ \te(-at,-bt) } \left(\frac{abt}{q}\right)^k q^{\hf k(k-1)}\Bigl(1+ \cO(q^{-k})\Bigr)\qquad k\to -\infty.
\end{equation}
Using the asymptotic behaviour of the weight function we conclude that
for $f\in\cH_t$ we have
\begin{equation}\label{eq:asymptforeltsofH}
\lim_{k\to\infty} f(tq^k) q^{\hf k} \, = \, 0, \quad \lim_{k\to\infty} f(-q^k) q^{\hf k} \, = \, 0,
\quad \lim_{k\to -\infty} f(tq^k) (abt)^{\hf k} q^{\frac14 k(k-1)}\, = \, 0.
\end{equation}

\begin{lemma} \label{lem:D(f,g)(infty)}
Let $f,g \in \cH_t$, then
$\lim_{k \to -\infty}D(f,\overline{g})(tq^k) = 0$.
\end{lemma}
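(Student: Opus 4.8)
The plan is to reduce the statement to the $L^2$-decay estimates already recorded in \eqref{eq:asymptforeltsofH}, after first re-expressing the weight $v$ occurring in \eqref{eq:D(f,g)} in terms of $w$. Using the elementary identity $(-ax;q)_\infty = (1+ax)(-aqx;q)_\infty$ and its analogue in $b$, the definition of $v$ collapses to
\[
v(x)\, =\, \frac{(1-q)(1+ax)(1+bx)}{x}\, w(x),
\]
so that the growth of $v$ at infinity is governed by that of $w$, multiplied by a factor that is linear in $x$ for large $|x|$.

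First I would feed this identity into the asymptotics \eqref{eq:behaviourwatinfty} of $w$ along the sequence $x=tq^k$, $k\to-\infty$. Since $(1+atq^k)(1+btq^k)/(tq^k)\sim abt\,q^k$, the factor $q^k$ cancels the factor $q^{-k}$ hidden in $(abt/q)^k$, leaving
\[
v(tq^k)\, =\, \cO\!\Big((abt)^k\, q^{\hf k(k-1)}\Big),\qquad k\to-\infty.
\]
Next I would write $D(f,\overline g)(tq^k)=\big(f(tq^k)\overline{g(tq^{k+1})}-f(tq^{k+1})\overline{g(tq^k)}\big)v(tq^k)$ and control the bracketed difference using \eqref{eq:asymptforeltsofH} for $f$ and for $g$, in the form $f(tq^j)=o\big((abt)^{-\hf j}q^{-\frac14 j(j-1)}\big)$ and likewise for $g$. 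Each of the two products, being a product of two such factors at consecutive indices $k$ and $k+1$, is then
\[
o\!\Big((abt)^{-k-\hf}\, q^{-\hf k^2}\Big).
\]

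The decisive point, and the main (though entirely routine) obstacle, is the bookkeeping of the quadratic $q$-exponents. On multiplying the bracketed difference by $v(tq^k)$, the power $q^{-\hf k^2}$ coming from the decay of $f$ and $g$ combines with the power $q^{\hf k(k-1)}$ from $v$ to leave exactly $q^{-\hf k}$, while the powers of $abt$ collapse to the constant $(abt)^{-\hf}$. Hence
\[
D(f,\overline g)(tq^k)\, =\, o\!\Big((abt)^{-\hf}\, q^{-\hf k}\Big),\qquad k\to-\infty,
\]
and since $q^{-\hf k}\to0$ as $k\to-\infty$ (because $q\in(0,1)$), the limit vanishes. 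The only care needed is to verify that the two quadratic exponents cancel precisely and that the surviving linear exponent carries the sign that forces decay rather than growth.
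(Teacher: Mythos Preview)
Your proof is correct and follows essentially the same route as the paper's: both rewrite $v$ in terms of $w$ (the paper via $v(x)=(1-q)xA(x)w(x)$, you via the equivalent $(1-q)(1+ax)(1+bx)w(x)/x$), feed in the asymptotics \eqref{eq:behaviourwatinfty} to obtain $v(tq^k)=\cO\big((abt)^k q^{\hf k(k-1)}\big)$, and then combine this with the decay \eqref{eq:asymptforeltsofH} for $f$ and $g$ at the consecutive indices $k,k+1$ to find that each cross-term is $o(q^{-\hf k})\to 0$. Your little-$o$/big-$\cO$ bookkeeping is a compact repackaging of exactly the limit computation the paper writes out.
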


Lemma \ref{lem:D(f,g)(infty)} shows that we don't require a condition at $\infty$ for
the definition of the domain of $L$.

\begin{proof} Since $v(x)=(1-q)xA(x)w(x)$ we find  from
\eqref{eq:behaviourwatinfty} that
\begin{equation}\label{eq:asymptoticsv}
v(tq^k)\, = \frac{(1-q) \te(-tq) }{t\te(-atq,-btq)}\, (abt)^k q^{\hf k(k-1)}
\Big(1 + \cO(q^{-k})\Big), \qquad k \to -\infty.
\end{equation}
Hence, for $f,g \in \cH_t$ we have  by \eqref{eq:asymptforeltsofH}
\[
\begin{split}
&\lim_{k\to\infty} f(tq^k)g(tq^{k+1})v(tq^k)\, = \,
K \lim_{k \to -\infty} f(tq^k)g(tq^{k+1}) (abt)^k q^{\hf k(k-1)}\,  = \, \\
&K(abt)^{-\hf} \lim_{k \to -\infty} q^{-k/2} \Big(f(tq^k)(abt)^{k/2} q^{\frac14k(k-1)}\Big) \Big(g(tq^{k+1}) (abt)^{\hf(k+1)} q^{\frac14 k(k+1)}\Big) = 0,
\end{split}
\]
with the constant $K = \frac{(1-q)\te(-tq)}{t\te(-atq,-btq)}$, so that
$\lim_{k \to -\infty} D(f,g)(tq^{k})=0$
by \eqref{eq:D(f,g)}.
\end{proof}

Recall the definition of $\cD$ in Theorem \ref{thm:LDselfadjointandspectraldecomp},
then we see that $\cD$ is dense in $\cH_t$, since it contains the dense subspace of
finitely supported functions.

\begin{prop} \label{prop:Lisselfadjoint}
The operator $(L,\mathcal D)$ is self-adjoint.
\end{prop}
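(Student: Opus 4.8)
The plan is to prove self-adjointness of $(L,\cD)$ by showing that $L$ is symmetric on $\cD$ and that its deficiency indices both vanish, or equivalently by directly computing the adjoint and verifying $L^* = L$ on $\cD$. Since we already know from Lemma \ref{lem:trinprod(f,g)=D(f,g)} that $L$ restricted to finitely supported functions is symmetric, the first task is to upgrade symmetry to the full domain $\cD$. For $f,g \in \cD$, I would take the limit $l\to\infty$, $m\to-\infty$, $n\to\infty$ in Lemma \ref{lem:trinprod(f,g)=D(f,g)}. The boundary term at infinity, namely $D(f,\overline g)(tq^{m})$ as $m\to-\infty$, vanishes by Lemma \ref{lem:D(f,g)(infty)}. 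The remaining two boundary terms, $D(f,\overline g)(-q^{l})$ as $l\to\infty$ and $D(f,\overline g)(tq^{n-1})$ as $n\to\infty$, are the Casorati determinants evaluated at the two sequences approaching the origin $0^{-}$ and $0^{+}$. Here the boundary conditions $f(0^-)=f(0^+)$ and $f'(0^-)=f'(0^+)$ built into $\cD$ are exactly what I need: writing $D(f,\overline g)$ in the $q$-derivative form from \eqref{eq:D(f,g)}, namely $\big((D_qf)(x)\overline{g}(x) - f(x)(D_q\overline g)(x)\big)u(x)$, and using $u(x)=(1-q)xv(x)\to 0$ together with the convergence of $f,g$ and their $q$-derivatives to the common boundary values, I would show the two contributions near the origin cancel. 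This gives symmetry of $(L,\cD)$.

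The harder direction is showing $\cD$ is all of the domain of $L^*$, i.e. that there is no proper self-adjoint extension and no room to enlarge $\cD$. The natural approach is to describe $\cD(L^*)$ explicitly: a function $g\in\cH_t$ lies in $\cD(L^*)$ iff $f\mapsto\langle Lf,g\rangle$ is bounded on the finitely supported functions, which via the telescoping computation forces $Lg\in\cH_t$ and constrains the boundary behaviour. So $\cD(L^*)$ consists of those $g\in\cH_t$ with $Lg\in\cH_t$ such that the boundary form at the origin, $D(f,\overline g)(-q^l)+D(f,\overline g)(tq^{n-1})$ in the limit, vanishes for every $f\in\cD$. I would then argue that since the Casorati determinant near $0^\pm$ depends only on the boundary values $g(0^\pm)$ and $g'(0^\pm)$ (which exist for $g\in\cD(L^*)$ because $Lg\in\cH_t$ controls the difference-equation and hence the local behaviour at the origin), the vanishing of the boundary form against all admissible $f\in\cD$ pins down the matching conditions $g(0^-)=g(0^+)$ and $g'(0^-)=g'(0^+)$, placing $g\in\cD$.

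The main obstacle, and the step requiring genuine care, is the analysis at the origin: I must establish that for $g\in\cD(L^*)$ the limits $g(0^\pm)$ and $g'(0^\pm)$ exist (this is not automatic from membership in $\cH_t$, but should follow from the condition $Lg\in\cH_t$ via \eqref{eq:behaviourwatzero}, which shows the weight is $1+\cO(q^k)$ near the origin so the difference operator $L$ is regular there) and that the boundary sesquilinear form reduces to a finite-dimensional expression in the four quantities $g(0^+)-g(0^-)$ and $g'(0^+)-g'(0^-)$. Concretely, the point $x=0$ is a confluence of the two half-lines $-q^\N$ and $tq^\Z$, and the operator $L$ has coefficients $A(x)=(a+1/x)(b+1/x)$, $B(x)=(q/x)(1+1/x)$ with an apparent singularity at $x=0$; I would need to check that the weighted boundary terms $v(-q^l)$ and $v(tq^{n-1})$ have finite nonzero limits (or the correct compensating behaviour) so that the boundary form is nondegenerate in these four boundary data. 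Once the form is shown to be the standard symplectic pairing of the boundary values, the matching conditions defining $\cD$ are precisely the Lagrangian (maximal isotropic) subspace, and self-adjointness follows. I expect the regularity argument at $x=0$ to be the delicate technical core, while the limits and cancellations elsewhere are routine consequences of the asymptotics already recorded in \eqref{eq:asymptforeltsofH} and Lemma \ref{lem:D(f,g)(infty)}.
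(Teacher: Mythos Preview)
Your overall strategy coincides with what the paper has in mind (the paper gives no argument of its own and defers to \cite[Prop.~2.7]{KoelS}, which follows precisely this pattern: establish symmetry on $\cD$ via the boundary form, then identify $\cD(L^*)$ by showing that the boundary form at the origin is a nondegenerate symplectic pairing on the four boundary data $g(0^\pm),g'(0^\pm)$, with $\cD$ the Lagrangian subspace cut out by the matching conditions). Your outline of the adjoint computation, including the point that the existence of $g(0^\pm),g'(0^\pm)$ for $g\in\cD(L^*)$ must be extracted from $Lg\in\cH_t$ together with \eqref{eq:behaviourwatzero}, is sound.

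There is, however, one concrete error in your symmetry step. You assert $u(x)=(1-q)xv(x)\to 0$ as $x\to 0$, but from the definitions
\[
u(x)=(1-q)^2\,\frac{(-qx;q)_\infty}{(-aqx,-bqx;q)_\infty}\ \longrightarrow\ (1-q)^2\neq 0.
\]
This is not a cosmetic slip: if $u$ vanished at the origin the boundary form would be trivially zero regardless of boundary values, the matching conditions defining $\cD$ would be superfluous, and $L$ restricted to finitely supported functions would already be essentially self-adjoint---contradicting the remark the paper makes immediately after the Proposition. The correct mechanism is that each of $D(f,\overline g)(-q^l)$ and $D(f,\overline g)(tq^{n-1})$ tends, as $l,n\to\infty$, to the common nonzero value $(1-q)^2\bigl(f'(0)\,\overline{g(0)}-f(0)\,\overline{g'(0)}\bigr)$ once the matching conditions for $f$ and $g$ are imposed, and these two contributions enter the boundary form with \emph{opposite} signs (one from each half-line, via the orientation of the telescoping sums), hence cancel. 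Your later picture---the boundary form nondegenerate in the four boundary data, $\cD$ the maximal isotropic subspace---is exactly right; it is precisely the nonvanishing of $u(0^\pm)$ that makes this nondegeneracy hold and forces both matching conditions.
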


Proposition \ref{prop:Lisselfadjoint} proves the first statement of
Theorem \ref{thm:LDselfadjointandspectraldecomp}. The proof of
Proposition \ref{prop:Lisselfadjoint} is completely analogous to the proof of
\cite[Prop.~2.7]{KoelS}, and is left to the reader. Note that we can
also introduce a one-parameter family of domains $\cD_\al$ as in
\cite{KoelS} so that $(L, \cD_\al)$ is also self-adjoint. In particular,
$L$ restricted to the finitely supported functions in $\cH_t$ is not
essentially self-adjoint.

In order to find the spectral decomposition we need to find sufficiently
many eigenfunctions. The first step is the following lemma, whose
proof follows \cite[Lemma~3.1, Prop.~3.2, Cor.~3.3.]{KoelS}.

\begin{lemma}\label{lem:dimVmu}
For $\mu\in \C$ we define
\[
V_\mu \, = \, \{f\in \cF_q \mid Lf(x)\, = \, \mu f(x)\ \text{for}\
x\in -q^{\N+1}\cup tq^\Z, \ f(0^+)\,=\, f(0^-), \ f'(0^+)\,=\, f'(0^-)\}.
\]
Then $\dim V_\mu\leq 2$. Moreover, for $f_1,f_2\in V_\mu$  the Casorati
determinant $D(f_1,f_2)$ is constant as a function on $-q^{\N+1}\cup tq^\Z$.
In case $\dim V_\mu = 2$, the restriction operator from $V_\mu$ to the
space $\{f\in \cF_q \mid Lf(x)\, = \, \mu f(x)\ \text{for}\
x\in tq^\Z\}$ is a bijection.
\end{lemma}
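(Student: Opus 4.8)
The plan is to reduce the problem to the two $q$-geometric sublattices $-q^\N$ and $tq^\Z$ separately, exploiting that, via \eqref{eq:defL}, the eigenvalue equation $Lf(x)=\mu f(x)$ is the three-term recurrence $A(x)f(qx)+B(x)f(x/q)=(A(x)+B(x)+\mu)f(x)$. First I would record that under Condition \ref{cond:onparameters} neither $A$ nor $B$ vanishes on $tq^\Z$, so that on the bi-infinite lattice $tq^\Z$ a solution is determined by, and may be freely prescribed at, two consecutive points; hence the space $S$ of solutions of $Lh=\mu h$ on $tq^\Z$ is exactly two-dimensional. The whole statement then follows once I show that the restriction map $R\colon V_\mu\to S$, $f\mapsto f|_{tq^\Z}$, is injective: this yields $\dim V_\mu\le\dim S=2$, and in the case $\dim V_\mu=2$ an injective linear map between spaces of equal finite dimension is automatically surjective, giving the asserted bijection.

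The injectivity of $R$ is the heart of the matter. Suppose $f\in V_\mu$ with $f|_{tq^\Z}=0$; then $f(0^+)=f'(0^+)=0$, and the matching conditions in the definition of $V_\mu$ force $f(0^-)=f'(0^-)=0$, so it remains to prove that $f\equiv 0$ on $-q^\N$. There $f$ obeys the same recurrence at the points $-q^j$ with $j\ge 1$, where again $A$ and $B$ are nonvanishing, so this solution space is two-dimensional as well. I would analyse the point $x=0$: substituting $f(x)\sim x^s$ into the leading part of $x^2(L-\mu)f$ gives the indicial equation $q^s+q^{1-s}=1+q$, with roots $s=0$ and $s=1$. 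Although these differ by an integer, I would check that the resonance is harmless, the source term feeding the would-be logarithmic coefficient vanishing identically, so that there are two linearly independent solutions $\psi_0,\psi_1$ normalised by $(\psi_0(0^-),\psi_0'(0^-))=(1,0)$ and $(\psi_1(0^-),\psi_1'(0^-))=(0,1)$, one of them given explicitly by the ${}_2\varphi_2$-series $\phi_\ga$ of \eqref{eq:phi=2phi2}. Since $\{\psi_0,\psi_1\}$ spans the solution space, writing $f=\al\psi_0+\be\psi_1$ gives $\al=f(0^-)=0$ and $\be=f'(0^-)=0$, whence $f\equiv 0$ on $-q^\N$ and $R$ is injective.

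For the statement on the Casorati determinant I would use the Lagrange-type identity behind Lemma \ref{lem:trinprod(f,g)=D(f,g)}, namely $\bigl((Lf)(x)g(x)-f(x)(Lg)(x)\bigr)(1-q)x\,w(x)=D(f,g)(x/q)-D(f,g)(x)$, which holds by the same computation, the underlying relation $A(x)v(x/q)=B(x)v(x)$ being a purely algebraic identity. For $f_1,f_2\in V_\mu$ the left-hand side vanishes at every $x\in-q^{\N+1}\cup tq^\Z$, so $D(f_1,f_2)$ is constant separately along $tq^\Z$ and along $-q^\N$. Letting $x\to 0$ on each lattice and using $w(x)\to 1$ from \eqref{eq:behaviourwatzero} together with the second expression for $D$ in \eqref{eq:D(f,g)} (for which $u(x)\to(1-q)^2$), both one-sided limits equal $(1-q)^2\bigl(f_1'(0)f_2(0)-f_1(0)f_2'(0)\bigr)$; the matching conditions $f_i(0^-)=f_i(0^+)$, $f_i'(0^-)=f_i'(0^+)$ make the two limits coincide, so the two constants agree and $D(f_1,f_2)$ is constant on all of $-q^{\N+1}\cup tq^\Z$.

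The main obstacle is the local analysis at $x=0$ used for injectivity: one must confirm that, notwithstanding the integer-separated indicial roots, the two solutions exist free of any logarithmic term and that the pair $(f(0^-),f'(0^-))$ is a faithful coordinate on the two-dimensional solution space on $-q^\N$. This is exactly the point carried out in \cite[Lemma~3.1, Prop.~3.2, Cor.~3.3]{KoelS}, whose argument I would adapt to the coefficients $A,B$ of \eqref{eq:defL}.
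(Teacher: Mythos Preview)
Your proposal is correct and follows essentially the same route the paper indicates: the paper does not spell out a proof but refers to \cite[Lemma~3.1, Prop.~3.2, Cor.~3.3]{KoelS}, and your outline---reducing to the two-dimensional solution space on $tq^\Z$, proving injectivity of restriction via the local analysis at $x=0$, and matching the Casorati constants across the two sublattices by letting $x\to 0$ in the second form of \eqref{eq:D(f,g)}---is precisely that argument adapted to the present coefficients $A$, $B$. Your identification of the delicate point (the integer-separated indicial roots and the absence of a logarithmic solution, so that $(f(0^-),f'(0^-))$ faithfully parametrises the two-dimensional solution space on $-q^\N$) is accurate, and it is indeed the content of the cited results in \cite{KoelS}.
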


So we don't impose the condition $Lf(x)\, = \, \mu f(x)$ at $x=-1$.


\subsection{$q$-Meixner functions}\label{ssec:qMeixnerfunctions}

It is time to study the $q$-Meixner functions \eqref{eq:phi=2phi2}
in more detail, and we take this up now.

The $q$-Meixner functions defined by \eqref{eq:phi=2phi2} are
obviously symmetric in $a$ and $b$, as well as self-dual, i.e.
symmetric in $x$ and $\ga$;
\begin{equation}\label{eq:dualitysymmetryab}
\phi_\ga(x) \, = \, \phi_x(\ga), \qquad
\phi_\ga(x;a,b;q)\, = \, \phi_\ga(x;b,a;q).
\end{equation}
Moreover, since $(-1/x;q)_n (ab\ga x)^n$ is a polynomial of degree $n$ in $x$,
it follows that $\phi_\ga(x)$ is an entire function in $x$, hence also in $\ga$.

Using transformation formulas for basic hypergeometric series we can find several more explicit expressions for the $q$-Meixner functions.
From applying \cite[(III.4)]{GaspR} with $(A,B,C,Z) = (-1/x, -b\ga, b, -ax)$ (we write the parameters $a,b,c,z$ from \cite{GaspR} in capitals in order to avoid confusion) we find
\begin{equation} \label{eq:phi=2phi1}
\phi_\ga(x) \, =\,  \frac{ (-ax;q)_\infty }{(a;q)_\infty } \rphis{2}{1}{-1/x, -b\ga}{b}{q,-ax}, \qquad |ax|<1,
\end{equation}
and applying Heine's transformation \cite[(III.2)]{GaspR} with $(A,B,C,Z) = (-1/x, -b\ga, b, -ax)$ then gives
\begin{equation} \label{eq:phi}
\phi_\ga(x)\, = \,  \frac{(ab\ga x,-1/\ga;q)_\infty}{(a,b;q)_\infty} \rphis{2}{1}{-a\ga, -b\ga}{ab\ga x}{q,-\frac{1}{\ga}}, \qquad |\ga|>1.
\end{equation}
Furthermore, applying \cite[(III.4)]{GaspR} to \eqref{eq:phi} with $(A,B,C,Z) = (-b\ga,-1/x, b, -ax)$ we find
\begin{equation} \label{eq:phi=2phi2A}
\phi_\ga(x) =  \frac{(ab\ga x;q)_\infty}{(a;q)_\infty}\rphis{2}{2}{-b\ga,-bx}{b,ab\ga x}{q,a}.
\end{equation}
Observe that the $_2\varphi_1$-series in \eqref{eq:phi=2phi1} terminates for $x \in -q^\N$, so in this case $\phi_\ga(x)$ is a polynomial in $\ga$,
and in particular $\phi_\ga(-1) = 1$.

So for $n\in \N$ we have by \eqref{eq:phi=2phi1} and \eqref{eq:dualitysymmetryab}
the reduction to Proposition \ref{prop:orthogpolMeixner};
\begin{equation}\label{eq:qMeixnerfunctionpols}
\phi_{-q^n}(x;a,b;q)\, = \, \frac{1}{(a;q)_n}\, \rphis{2}{1}{q^{-n}, -bx}{b}{q, aq^n}
\, = \, m_n(x;a,b;q).
\end{equation}

\begin{prop} \label{prop:Lphi}
The $q$-Meixner function $\phi_\ga$ satisfies $(L\phi_\ga)(x) \, =\, \mu(\ga)\phi_\ga(x)$
for $x\in \R\setminus\{0\}$, $\ga\in\C$.
\end{prop}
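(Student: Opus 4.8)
The plan is to verify the eigenvalue equation $(L\phi_\ga)(x) = \mu(\ga)\phi_\ga(x)$ by direct substitution of the series \eqref{eq:phi=2phi2} into the difference operator $L$ from \eqref{eq:defL}, exploiting the self-duality \eqref{eq:dualitysymmetryab} of $\phi_\ga$ to organize the computation. Writing $\phi_\ga(x) = \sum_{k=0}^\infty c_k(\ga)(-1/x;q)_k$ where $c_k(\ga) = \frac{(-1/\ga;q)_k}{(a,b;q)_k(q;q)_k}(ab\ga)^k$, the key observation is that $L$ acts naturally on the basis functions $g_k(x) = (-1/x;q)_k$. First I would compute $g_k(qx)$ and $g_k(x/q)$ in terms of $g_j(x)$: since $(-1/(qx);q)_k = \frac{(1+1/(qx))}{1}\cdots$, one uses the elementary factorizations $(-1/(qx);q)_k = (1+q^{-1}/x)(-1/x;q)_{k}\cdot\frac{(1-q^{k-1}(-1/x))^{-1}}{\cdots}$ — more cleanly, I would express the shifts via $(-q^{-1}/x;q)_k$ and $(-qx^{-1}\cdot\ldots)$ and reduce everything to a three-term combination of $g_{k-1}, g_k, g_{k+1}$.

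The heart of the argument is that $L$ should map the basis $\{g_k\}$ into itself in a banded (tridiagonal) fashion. Concretely, after applying $A(x)[g_k(qx)-g_k(x)] + B(x)[g_k(x/q)-g_k(x)]$ and clearing the rational factors $A(x) = (a+1/x)(b+1/x)$ and $B(x) = \frac{q}{x}(1+1/x)$, I expect the prefactors $1/x$ to recombine with the telescoping differences $g_k(qx)-g_k(x)$ so that the result is again a finite $\N$-linear combination of $g_j(x)$. The natural strategy is to show that the coefficient of $g_k(x)$ in $L\phi_\ga$, after collecting contributions from the $g_{k-1}, g_k, g_{k+1}$ terms of the expansion weighted by $c_{k\mp1}(\ga), c_k(\ga)$, equals $\mu(\ga)c_k(\ga) = -ab(1+\ga)c_k(\ga)$. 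This reduces the whole proposition to a finite set of polynomial identities in $q^{k}$, $\ga$, $a$, $b$, namely the standard three-term contiguous relations underlying the $q$-Meixner difference equation. Alternatively, and perhaps more efficiently, I would invoke \eqref{eq:linktoqMeixnerpols} together with the remark already made in the text that on a geometric grid $L$ reduces to the known second-order $q$-difference equation for the $q$-Meixner polynomials \cite{KoekS}; the eigenvalue there is known, and analytic continuation in the degree variable (replacing $q^{-n}$ by the continuous parameter encoded in $\ga$) extends it to all $\ga \in \C$.

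The main obstacle will be bookkeeping: keeping the index shifts and the $q$-shifted factorial ratios straight when one applies the dilations $x\mapsto qx$ and $x\mapsto x/q$ to the series, since the rational functions $A(x)$ and $B(x)$ introduce poles at $x=0$ that must cancel against the vanishing of the differences. I would control this by first establishing the action of $L$ on each $g_k$ in closed form (a clean lemma-style computation), thereby isolating the only genuinely nontrivial step; once $L g_k = \alpha_k g_{k+1} + \beta_k g_k + \gamma_k g_{k-1}$ is in hand with explicit $\alpha_k,\beta_k,\gamma_k$, matching coefficients against the recurrence satisfied by $c_k(\ga)$ is routine. Because $\phi_\ga$ is entire in $x$ by the remark following \eqref{eq:dualitysymmetryab}, the term-by-term application of $L$ and the rearrangement of the resulting absolutely convergent series require no separate justification beyond noting local uniform convergence. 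Finally, since the identity to be proved is between two entire functions of $x$ (for fixed $\ga\in\C$), it suffices to verify it on the grid $tq^\Z$, or indeed to check the coefficient identities formally, and the statement for all $x\in\R\setminus\{0\}$ then follows.
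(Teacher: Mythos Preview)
Your strategy is viable but differs from the paper's, and there is a slip in your setup that you should fix before executing it.

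\textbf{The slip.} In the ${}_2\varphi_2$ series \eqref{eq:phi=2phi2} the $k$th term carries the factor $(-1/x;q)_k\,(ab\ga x)^k\,(-1)^k q^{\frac12 k(k-1)}$, so the $x$-dependence is $(-1/x;q)_k\,x^k=\prod_{j=0}^{k-1}(x+q^j)$, not $(-1/x;q)_k$ alone. Your decomposition $\phi_\ga(x)=\sum_k c_k(\ga)(-1/x;q)_k$ with $c_k(\ga)=\frac{(-1/\ga;q)_k}{(q,a,b;q)_k}(ab\ga)^k$ drops both the $x^k$ and the $(-1)^k q^{\frac12 k(k-1)}$. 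Once you work instead with $p_k(x)=(-1/x;q)_k\,x^k$ (polynomials of degree $k$ in $x$), your plan goes through: $L$ maps polynomials of degree $\le k$ to polynomials of degree $\le k$, and the action of $L$ on $p_k$ is indeed a three-term combination in this basis, so matching coefficients against the obvious first-order recursion for the $c_k(\ga)$ finishes the job. Your alternative of ``analytic continuation in the degree variable'' from the polynomial case is not a proof as stated: replacing $q^{-n}$ by a continuous parameter is not an analytic continuation in any evident sense, and you would need to supply a genuine argument.

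\textbf{Comparison with the paper.} The paper avoids the term-by-term computation entirely. It uses the alternative representation \eqref{eq:phi}, valid for $|\ga|>1$, in which $x$ enters only through the lower parameter $C=ab\ga x$ of a ${}_2\varphi_1$. The dilations $x\mapsto qx$ and $x\mapsto x/q$ then become $C\mapsto qC$ and $C\mapsto C/q$, so the eigenvalue equation is exactly a classical three-term contiguous relation for ${}_2\varphi_1$ in $C$ (Gasper--Rahman, Exercise~1.10(iv)). One substitution and a multiplication by the prefactor give the result for $|\ga|>1$; analyticity in $\ga$ extends it to all $\ga\in\C$. This is considerably shorter and cleaner: the contiguous relation is quoted rather than rederived, and no series rearrangement or coefficient matching is needed. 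Your approach, once the basis is corrected, has the advantage of being self-contained and of working directly with the entire-in-$x$ expression \eqref{eq:phi=2phi2}, so no separate analytic continuation in $\ga$ is required.
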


\begin{proof}
This follows from one of Heine's $q$-contiguous relations, see \cite[Exer.1.10(iv)]{GaspR}. Denote
\[
\varphi(C)=\rphis{2}{1}{A,B}{C}{q,Z},
\]
then
\[
\begin{split}
(q-C)(ABZ-C)\varphi(Cq^{-1})+[C(q-C)+(C(A+B)-&AB(1+q))Z]\varphi(C)\\
&+\frac{ (C-A)(C-B)Z}{1-C}\varphi(Cq)=0.
\end{split}
\]
Substitute $(A,B,C,Z) \mapsto (-a\ga,-b\ga,ab\ga x, -1/\ga)$, and multiply by $\frac{(ab\ga x,-1/\ga;q)_\infty}{(a,b;q)_\infty}$, then using \eqref{eq:phi} we find
\[
-qab\ga(1+x)\phi_\ga(x/q) + ab\ga[qx-ab\ga x^2+ax+bx+1+q]\phi_\ga(x)- ab\ga(1+bx)(1+ax)\phi_\ga(qx)=0
\]
for $|\ga|>1$. By Condition \ref{cond:onparameters} $ab\not= 0$, so we find the
result for $|\ga|>1$. Since the expression is analytic in $\ga$ the result follows.
\end{proof}

For later use we list some useful properties of the $q$-Meixner functions.

\begin{lemma} \label{lem:propertiesofphi}
The $q$-Meixner function $\phi_\ga$ has the following properties.
\begin{enumerate}[(i)]
\item $\displaystyle (D_q \phi_\ga)(x) =  \frac{ -ab(1+\ga) }{(1-q)(1-a)(1-b)} \phi_{\ga/q}(x;aq,bq;q)$
\item $\displaystyle \lim_{x \to 0} \phi_\ga(x) = \frac{1}{(a;q)_\infty}\rphis{1}{1}{-b\ga}{b}{q,a}$
\item For $ab\ga t\notin q^\Z$,
\[
\phi_\ga(tq^k) = \frac{(-1/\ga;q)_\infty\te(ab\ga t)}{(a,b;q)_\infty} (-ab\ga t)^{-k} q^{-\hf k(k-1)}\Big(1+ \cO(q^{-k})\Big), \qquad k \to -\infty.
\]
\end{enumerate}
\end{lemma}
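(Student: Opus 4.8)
The plan is to establish each of the three properties by selecting the representation of $\phi_\ga$ from Subsection~\ref{ssec:qMeixnerfunctions} that is best adapted to the operation in question, and then reducing to a standard manipulation of basic hypergeometric series. For part~(i), the cleanest starting point is the defining series \eqref{eq:phi=2phi2}, since the $q$-derivative $D_q$ acts term by term in $x$. Recalling that $(-1/x;q)_n(ab\ga x)^n$ is a polynomial of degree $n$ in $x$, I would apply $D_q$ to each summand using the product/shift rules for $D_q$ on $q$-shifted factorials; the key identity is that $D_q$ lowers the degree and shifts the parameters $a,b\mapsto aq,bq$ while introducing the prefactor $\frac{-ab(1+\ga)}{(1-q)(1-a)(1-b)}$. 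I expect the bookkeeping to collapse because applying $D_q$ to $(-1/x;q)_k$ relates it to $(-1/x;q)_{k-1}$ with a $q$-power, after which reindexing the sum produces exactly the $_2\vp_2$ series defining $\phi_{\ga/q}(x;aq,bq;q)$. The appearance of $\phi_{\ga/q}$ (rather than $\phi_\ga$) is forced by the duality \eqref{eq:dualitysymmetryab} together with the parameter shift, and the factor $(1+\ga)$ is the vestige of the $k=0$ term vanishing, consistent with the eigenvalue $\mu(\ga)=-ab(1+\ga)$ from Proposition~\ref{prop:Lphi}.

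For part~(ii), the limit $x\to 0$ is immediate from the series \eqref{eq:phi=2phi2A}, which carries the prefactor $(ab\ga x;q)_\infty/(a;q)_\infty$. As $x\to 0$ one has $(ab\ga x;q)_\infty\to 1$, and setting $x=0$ termwise in the $_2\vp_2$ in \eqref{eq:phi=2phi2A} kills the $-bx$ upper parameter, collapsing $_2\vp_2$ to $_1\vp_1$. Concretely, $(-bx;q)_k\to 1$ and the $ab\ga x$ lower parameter is compensated by the prefactor, leaving $\frac{1}{(a;q)_\infty}\rphis{1}{1}{-b\ga}{b}{q,a}$; the only care needed is justifying the interchange of limit and sum, which follows from the entirety of $\phi_\ga$ in $x$ established just before Proposition~\ref{prop:Lphi} (the series converges locally uniformly).

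For part~(iii), which I expect to be the main obstacle, the goal is the asymptotics as $k\to-\infty$, i.e.\ as $t q^k\to\infty$. Here I would use the representation \eqref{eq:phi}, valid for $|\ga|>1$, since it exhibits the $\te$-function dependence on $x$ through the prefactor $(ab\ga x,-1/\ga;q)_\infty$. The difficulty is that $(ab\ga x;q)_\infty$ must be converted into a $\te$-function to extract the leading behavior: writing $x=tq^k$ and applying the $\te$-product identity \eqref{eq:thetaproduct} to $\te(ab\ga t)$ produces the factor $(-ab\ga t)^{-k}q^{-\hf k(k-1)}$, while the residual $_2\vp_1\left(\genfrac{.}{.}{0pt}{}{-a\ga,-b\ga}{ab\ga x}\,;q,-1/\ga\right)$ must be shown to tend to $1$ as its lower parameter $ab\ga x\to\infty$. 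That last point is the crux: I would argue that each term of the series beyond $k=0$ is $\cO(q^{-k})$ because the lower parameter growing like $q^k$ suppresses all higher terms, giving the stated error $1+\cO(q^{-k})$. Finally, since both sides are analytic in $\ga$ and the identity holds for $|\ga|>1$, the restriction may be removed by analytic continuation, exactly as in the proof of Proposition~\ref{prop:Lphi}, provided $ab\ga t\notin q^\Z$ so that $\te(ab\ga t)\ne 0$ and the expansion is legitimate.
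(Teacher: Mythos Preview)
Your treatment of parts~(i) and~(ii) matches the paper's proof: for~(i) the paper also differentiates \eqref{eq:phi=2phi2} termwise via the identity $(-1/x;q)_n-(-1/qx;q)_nq^n=(1-q^n)(-1/x;q)_{n-1}$ (and mentions the alternative through duality and a contiguous relation), and for~(ii) the paper likewise reads off the limit directly from \eqref{eq:phi=2phi2A}.

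For part~(iii) you take a different route than the paper, and there is a gap in your final step. Your argument via \eqref{eq:phi} is correct and in fact slicker than the paper's for $|\ga|>1$: the prefactor $(ab\ga tq^k;q)_\infty$ yields $\te(ab\ga t)(-ab\ga t)^{-k}q^{-\frac12 k(k-1)}(1+\cO(q^{-k}))$ via \eqref{eq:thetaproduct}, and the ${}_2\vp_1$ with lower parameter $ab\ga tq^k\to\infty$ tends to~$1$ with error $\cO(q^{-k})$. The problem is your last sentence. In Proposition~\ref{prop:Lphi} analytic continuation is legitimate because one is extending an \emph{identity} between two entire functions of~$\ga$. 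Here the statement is an asymptotic estimate, i.e.\ a bound $|E_k(\ga)|\le C(\ga)q^{-k}$ on the relative error, and such a bound does not propagate by analytic continuation: for $|\ga|\le 1$ the ${}_2\vp_1$ in \eqref{eq:phi} diverges, so your error term $E_k(\ga)$ is not even defined by a convergent series there, and knowing it is $\cO(q^{-k})$ on $|\ga|>1$ says nothing a priori about its size (as a meromorphic function) on the smaller disk.

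The paper avoids this by starting instead from \eqref{eq:phi=2phi2A}, whose ${}_2\vp_2$ converges for all~$\ga$. After rewriting $(-btq^k;q)_l$ and $(ab\ga tq^k;q)_\infty/(ab\ga tq^k;q)_l$ with \eqref{eq:thetaproduct}, dominated convergence gives, uniformly in~$\ga\neq 0$, the limit $\frac{\te(ab\ga t)}{(a;q)_\infty}\,{}_1\vp_1(-b\ga;b;q,-1/\ga)$ for the renormalised sequence, and the ${}_1\vp_1$ summation \cite[(II.5)]{GaspR} then supplies the factor $(-1/\ga;q)_\infty/(b;q)_\infty$. So the trade-off is: your route gives the leading coefficient for free but only on $|\ga|>1$; the paper's route works for every~$\ga$ at the cost of invoking one extra summation formula. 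To repair your argument you should either switch to \eqref{eq:phi=2phi2A} for~(iii), or supply a genuine uniform bound on the remainder valid across $|\ga|\le 1$ (e.g.\ by truncating the ${}_2\vp_1$ at a fixed order and bounding the truncation remainder directly from \eqref{eq:phi=2phi2A}).
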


It follows that $\phi_\ga(0^+)\, = \, \phi_\ga(0^-)$ and $\phi_\ga'(0^+)\, = \, \phi_\ga'(0^-)$
by Lemma \ref{lem:propertiesofphi}(i), (ii). However, Lemma \ref{lem:propertiesofphi}(iii) and
\eqref{eq:asymptforeltsofH} show that in general $\phi_\ga \notin\cH_t$. It remains
to investigate what happens in case the leading coefficient vanishes, i.e.
for $\ga \in -q^\N \cup q^\Z/abt$. Note that the behaviour of $\phi_\ga$ at $x\to 0$
suffices to have square integrability with respect to the weight $w$ at zero.

\begin{proof}
The proof of (i) can either be done straightforwardly using
\[
(-1/x;q)_n - (-1/qx;q)_n q^n =(1-q^n) (-1/x;q)_{n-1}.
 \]
and the expression \eqref{eq:phi=2phi2}. Or one can use the
duality \eqref{eq:dualitysymmetryab} and \eqref{eq:phi=2phi1}
and the contiguous relation \cite[Exerc. 1.9(ii)]{GaspR} to
prove the first statement.

The second statement follows immediately from \eqref{eq:phi=2phi2A}.

For the last statement we use \eqref{eq:phi=2phi2A} and some rewriting
to find for $k\to -\infty$
\begin{equation*}
\begin{split}
\phi_\ga(tq^k)\, &= \,
\frac{\te(ab\ga t)\, (-abt\ga)^{-k} q^{-\hf k(k-1)}}{(a;q)_\infty}
\sum_{l=0}^\infty \frac{(-b\ga;q)_l \ga^{-l} q^{\hf l(l-1)}}
{(q,b;q)_l} \, \frac{(-q^{-k}/bt;q^{-1})_l}{(q^{-k-l}/ab\ga t;q)_\infty} \\
&=\, \frac{\te(ab\ga tq^k)}{(a;q)_\infty}
(-abt\ga)^{-k} q^{-\hf k(k-1)}\, \rphis{1}{1}{-b\ga}{b}{q, -\frac{1}{\ga}}
\Bigl( 1 + \cO(q^{-k})\Bigr)
\end{split}
\end{equation*}
using the theta-product identity \eqref{eq:thetaproduct} and
dominated convergence. The ${}_1\vp_1$-summation formula
\cite[(II.5)]{GaspR} gives the result.
\end{proof}

We can characterize the solution $\phi_\ga$ to the eigenvalue equation
$Lf\, =- \, \mu f$.

\begin{prop} \label{prop:boundarycondforphiat-1}
The function $\phi_\ga$ satisfies $L\phi_\ga =\mu(\ga)\phi_\ga$ on $\R\setminus\{0\}$.
Moreover, if $f \in V_\mu(\ga)$ is such that $(Lf)(-1)\, =\, \mu(\ga)f(-1)$ and $f(-1)=1$,
then $f = \phi_\ga$ as elements of $\cF_q$.
\end{prop}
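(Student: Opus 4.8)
The first assertion is precisely Proposition \ref{prop:Lphi}, so only the uniqueness claim needs proof. The plan is to use that the point $x=-1$ is special: since $B(-1)=\frac{q}{-1}\bigl(1+\frac{1}{-1}\bigr)=0$, the eigenvalue equation $Lg=\mu(\ga)g$ at $x=-1$ collapses to the two-term relation
\[
A(-1)\bigl[g(-q)-g(-1)\bigr]=\mu(\ga)\,g(-1),\qquad A(-1)=(a-1)(b-1).
\]
This relation is meaningful for every $g\in\cF_q$, the undefined value $g(-1/q)$ entering only through the coefficient $B(-1)=0$; and under Condition \ref{cond:onparameters} one has $A(-1)=(a-1)(b-1)\neq0$, so it determines $g(-q)$ from $g(-1)$. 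By hypothesis $f$ satisfies this relation, and by Proposition \ref{prop:Lphi} so does $\phi_\ga$. Since $f(-1)=\phi_\ga(-1)=1$, I would first conclude $f(-q)=\phi_\ga(-q)$.

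Next I would propagate along the one-sided lattice $-q^\N$. For $k\geq1$ the equation $Lg=\mu(\ga)g$ at $x=-q^k$ expresses $g(-q^{k+1})$ through $g(-q^k)$ and $g(-q^{k-1})$ with leading coefficient $A(-q^k)=(a-q^{-k})(b-q^{-k})$, which is nonzero by Condition \ref{cond:onparameters} since there $a,b\neq q^{-k}$ for all $k\geq1$. As $f$ (being in $V_{\mu(\ga)}$) and $\phi_\ga$ both solve this recurrence and agree at $-1$ and at $-q$, an induction yields $f=\phi_\ga$ on all of $-q^\N$. In particular the limits at the origin coincide, $f(0^-)=\phi_\ga(0^-)$ and $f'(0^-)=\phi_\ga'(0^-)$; since $f,\phi_\ga\in V_{\mu(\ga)}$ obey the matching conditions $g(0^+)=g(0^-)$ and $g'(0^+)=g'(0^-)$, the boundary data at $0^+$ agree as well.

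The remaining lattice $tq^\Z$ is the main obstacle: it is two-sided, so the recurrence has no starting point and cannot be propagated from a single value. Here I would turn to the Casorati determinant $D(f,\phi_\ga)$ of \eqref{eq:D(f,g)}. Substituting $f(-q)$ and $\phi_\ga(-q)$ from the two-term relation at $x=-1$ gives $D(f,\phi_\ga)(-1)=\bigl(f(-1)\phi_\ga(-q)-f(-q)\phi_\ga(-1)\bigr)v(-1)=0$, and applying the eigenvalue equation at $x=-q$ then propagates this vanishing to $D(f,\phi_\ga)(-q)=0$. By Lemma \ref{lem:dimVmu} the Casorati determinant is constant on $-q^{\N+1}\cup tq^\Z$, whence $D(f,\phi_\ga)\equiv0$ throughout. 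Since $v$ is nonzero on $tq^\Z$ under Condition \ref{cond:onparameters}, a vanishing Casorati determinant for the nondegenerate three-term recurrence forces $f$ and $\phi_\ga$ to be proportional there, and the agreement of the $0^+$-data established above fixes the constant of proportionality to be $1$ (using that $\phi_\ga$ restricts to a nontrivial solution on $tq^\Z$, so its boundary data at $0^+$ do not both vanish, by Lemma \ref{lem:propertiesofphi}(ii)). Combining the two lattices gives $f=\phi_\ga$ as elements of $\cF_q$. I expect the genuinely delicate point to be exactly this last step—pinning down the proportionality constant on the two-sided lattice from the behaviour at the origin—rather than the elementary one-sided propagation on $-q^\N$.
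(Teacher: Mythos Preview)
Your proof is correct and follows essentially the same route as the paper: exploit $B(-1)=0$ to initialize, propagate along $-q^{\N}$ via the nondegenerate recursion, then use the constancy of the Casorati determinant from Lemma~\ref{lem:dimVmu} to carry the vanishing over to $tq^{\Z}$. The one place you work harder than necessary is the step you yourself flag as delicate: the paper pins down the proportionality constant on $tq^{\Z}$ not through the $0^+$ boundary data but simply by noting that $D(f,\phi_\ga)\equiv 0$ makes $f$ and $\phi_\ga$ linearly dependent \emph{in $V_{\mu(\ga)}$}, so $f=C\phi_\ga$ with a single constant $C$ on all of $-q^{\N+1}\cup tq^{\Z}$, and the already-established equality on $-q^{\N+1}$ forces $C=1$.
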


\begin{proof}
Proposition \ref{prop:Lphi} gives the first statement.
Lemma \ref{lem:propertiesofphi}(i) shows
\[
\phi_\ga(-q)-\phi_\ga(-1) =  \frac{-ab(1+\ga)}{(1-a)(1-b)}\phi_{\ga/q}(-1;aq,bq;q).
\]
Since $\phi_\ga(-1)=1$, see the remark following \eqref{eq:phi=2phi2A},  we have
\[
(1-a)(1-b)\Big(\phi_\ga(-q)-\phi_\ga(-1) \Big)=-ab(1+\ga)\phi_\ga(-1),
\]
or equivalently, $(L\phi_\ga)(-1) = \mu(\ga)\phi_{\ga}(-1)$ since $B(-1) = 0$.

So $\phi_\ga$ has the properties of $f$ as stated. Now assume that $f$ is a function
satisfying these properties. The values of $f$ on $-q^\N$ are completely determined by the
recurrence relation
\[
\begin{split}
A(-q^k)f(-q^{k+1}) &\, =\, [\mu(\ga)+A(-q^k)+B(-q^k)]f(-q^k)\, -\,  B(-q^k) f(-q^{k-1}),
\qquad k \in \N_{\geq 1},\\
A(-1)f(-q) &\, =\,  [\mu(\ga)+A(-1)]f(-1),
\end{split}
\]
which is just the eigenvalue equation $Lf\, =\, \mu(\ga)f$ on $-q^\N$. Note that
Condition \ref{cond:onparameters} implies that $A(-q^k) \, \not=\, 0$ for $k\in\N$.
So $f=\phi_\ga $ on $-q^\N$, since the solution space is one-dimensional and $f(-1)\, = \, \phi_\ga(-1)$.
In particular, $D(\phi_\ga,f)=0$ on $-q^\N$. By Lemma \ref{lem:dimVmu} we have $D(\phi_\ga,f)=0$ on $-q^{\N+1}\cup tq^\Z$, so that $f=C\,\phi_\ga$ on $-q^{\N+1}\cup tq^\Z$ for some nonzero constant $C$ which we have already determined as $1$. So $f=\phi_\ga$ on $-q^{\N}\cup tq^\Z$.
\end{proof}


\subsection{Asymptotic solutions}\label{ssec:asymptoticsols}

In order to describe the resolvent operator we need to have more
solutions to the eigenvalue equation, especially the ones that
behave nice in the points $tq^k$, $k\to-\infty$.

In order to describe this solution, we first consider another
solution.

\begin{lemma}\label{lem:dimVmu=2} The function $\psi_\ga$ defined by
\begin{equation*}
\psi_\ga(x)= \psi_\ga(x;a,b;q) = \frac{ (qa/b,aq\ga x,-bx;q)_\infty }{ (-qx,-q/b\ga,-q\ga;q)_\infty } \rphis{2}{2}{-a\ga,-ax}{aq\ga x,q/ab}{q,\frac{ q^2}{b}},
\end{equation*}
is  a solution to the eigenvalue equation $L\psi_\ga\, = \, \mu(\ga)\psi_\ga$
on $-q^{\N+1}\cup tq^\Z$, and
$\psi_\ga\in V_{\mu(\ga)}$.
\end{lemma}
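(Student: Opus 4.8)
The plan is to establish the statement in two independent parts: the $q$-difference equation $L\psi_\ga=\mu(\ga)\psi_\ga$ on $-q^{\N+1}\cup tq^\Z$, and the two matching conditions at the origin, so that $\psi_\ga$ lies in $V_{\mu(\ga)}$ as defined in Lemma \ref{lem:dimVmu}.

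For the difference equation I would follow the template of the proof of Proposition \ref{prop:Lphi}. Multiplying $Lf=\mu(\ga)f$ through by $x^2$ clears the denominators in $A$ and $B$ and turns the eigenvalue equation into the polynomial three-term recurrence
\begin{equation*}
(1+ax)(1+bx)\,\psi_\ga(qx)-\bigl[(1+ax)(1+bx)+q(1+x)+\mu(\ga)x^2\bigr]\psi_\ga(x)+q(1+x)\,\psi_\ga(x/q)=0,
\end{equation*}
exactly as for $\phi_\ga$ (the middle bracket equals $1+q+(a+b+q)x-ab\ga x^2$). Substituting the explicit expression for $\psi_\ga$, I would compute the ratios of the prefactor $\frac{(qa/b,aq\ga x,-bx;q)_\infty}{(-qx,-q/b\ga,-q\ga;q)_\infty}$ under $x\mapsto qx$ and $x\mapsto x/q$ (these are elementary, since $(cx;q)_\infty/(cqx;q)_\infty=1-cx$), and note that in ${}_2\vp_2$ the variable $x$ enters only through the upper entry $-ax$ and the lower entry $aq\ga x$. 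After factoring out these prefactor ratios, the recurrence should collapse to a three-term contiguous relation among ${}_2\vp_2$-series with $x$ shifted by $q^{\pm1}$, the analogue of the Heine contiguous relation \cite[Exer.~1.10]{GaspR} used for $\phi_\ga$, and I would identify it with, or derive it from, the contiguous relations of \cite[\S1.10]{GaspR}. Convergence is not an obstacle, since a ${}_2\vp_2$ (with $1+s-r=1$) converges for all values of its argument. I expect the organisation of the prefactor ratios and the regrouping of the shifted series into a \emph{recognisable} contiguous relation, rather than a brute-force term-by-term identity, to be the step that costs the most effort.

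It then remains to verify the behaviour at $0$. The prefactor is analytic at $x=0$, and in the ${}_2\vp_2$ the variable occurs only in the polynomial factor $(-ax;q)_k$ in the numerator and in $(aq\ga x;q)_k$ in the denominator, whose zeros lie away from the origin; hence each term is analytic near $x=0$ and $\psi_\ga$ extends analytically across $0$. Analyticity at $0$ forces $\psi_\ga(0^+)=\psi_\ga(0^-)=\lim_{x\to0}\psi_\ga(x)$, and, since $(D_qf)(x)\to f'(0)$ whenever $f$ is analytic at $0$, also $\psi_\ga'(0^+)=\psi_\ga'(0^-)$. Combining these matching conditions with the difference equation restricted to $-q^{\N+1}\cup tq^\Z$ yields $\psi_\ga\in V_{\mu(\ga)}$, completing the proof.
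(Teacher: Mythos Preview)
Your overall strategy is sound, and the argument for the boundary conditions at $0$ is essentially the paper's (analyticity of the ${}_2\vp_2$ and of the prefactor at $x=0$ gives matching values and matching $q$-derivatives). The difference is in how you handle the eigenvalue equation. You propose to work directly with the ${}_2\vp_2$-expression, in which $x$ sits simultaneously in an upper parameter $-ax$ and a lower parameter $aq\ga x$; the three-term relation you need then shifts both at once, and this is not a single standard Heine relation but a combination you would have to assemble by hand. The paper sidesteps this by first applying the transformation \cite[(III.2)]{GaspR} with $(A,B,C,Z)=(-a\ga,-q\ga,aq\ga x,-q/b\ga)$ to rewrite $\psi_\ga$ as
\[
\psi_\ga(x)=\frac{(aq\ga x,-bx;q)_\infty}{(-qx,-q\ga;q)_\infty}\,\rphis{2}{1}{-a\ga,-q\ga}{aq\ga x}{q,-\frac{q}{b\ga}},\qquad |b\ga|>q,
\]
so that $x$ occurs only in the lower parameter $C=aq\ga x$. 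Then the \emph{same} contiguous relation in $C$ used in the proof of Proposition~\ref{prop:Lphi} applies verbatim with $(A,B,C,Z)\mapsto(-a\ga,-q\ga,aq\ga x,-q/b\ga)$, and analytic continuation in $\ga$ removes the restriction $|b\ga|>q$. Your route would work, but the preliminary transformation buys a much cleaner computation and reuses an identity already in hand.
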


The function $\psi_\ga$ is in general not symmetric in $a$ and $b$, hence we
find yet another solution to the eigenvalue equation given by
$\psi_\ga(\cdot;b,a;q)$. Furthermore, since
$(c;q)_\infty \rphis{2}{2}{a,b}{c,d}{q,z}$ is analytic in $c$, we see that
$x\mapsto\psi_\ga(x)$ has simple poles at $-q^{-\N+1}$ and
$\ga \mapsto \psi_\ga(x)$ has poles at $-q^{-\N-1} \cup -b^{-1}q^{1+\N}$.
For generic values ($b\notin q^{2+\N}$) of $b$ these poles are simple.
Also, from the definition we find
$\frac{\te(-bx)}{\te(-b\ga)}\psi_x(\ga)\, = \, \psi_\ga(x)$, so this
solution is almost self-dual. The definition of $\psi_\ga$ is motivated
by the results in \cite[\S 3]{Groe}.

\begin{proof}
Applying \cite[(III.2)]{GaspR} with $(A,B,C,Z) = (-a\ga, -q\ga, aq\ga x, -q/b\ga)$ to obtain \begin{equation} \label{eq:psi}
\psi_\ga(x) = \psi_\ga(x;a,b;q) = \frac{ (aq\ga x,-bx;q)_\infty }{ (-qx,-q\ga;q)_\infty } \rphis{2}{1}{-a\ga, -q\ga}{aq\ga x}{q,-\frac{q}{b\ga}}, \qquad |b\ga|>q.
\end{equation}
Using \eqref{eq:psi} and the $q$-contiguous relation given in the proof of Proposition \ref{prop:Lphi} with the substitution $(A,B,C,Z) \mapsto (-a\ga,-q\ga,aq\ga x, -q/b\ga)$,
we find $L\psi_\ga\, = \, \mu(\ga)\psi_\ga$ after a straightforward calculation and
continuation with respect to $\ga$.

From the ${}_2\vp_2$-expression it is clear that $\lim_{x\to 0}\psi_\ga(x)$ exists.
Using the Leibniz rule for the $q$-derivative, see \cite[Ch.~1]{GaspR}, it suffices
to calculate the $q$-derivatives of $f$ and $g$ in
$\psi_\ga(x) = f(x)g(x)$ with $f(x)=\frac{(-bx;q)_\infty}{(-qx;q)_\infty}$ and
$g$ then given by the definition of $\psi_\ga$. Then the limits of $f$ and $g$ as
$x\to 0$ exist, and the $q$-derivatives $D_qf$, $D_qg$ follow by a straightforward calculation,
and we see that also the limits of $D_qf$ and $D_qg$ exist as
$x\to 0$. It follows that $\psi_\ga\in V_{\mu(\ga)}$.
\end{proof}

It follows that the function defined by
\begin{equation}\label{eq:defPhi1}
\Phi_\ga(x) \, = \, (a,b;q)_\infty\, \phi_\ga(x) \, - \, c(\ga)\, \psi_\ga(x),
\quad c(\ga)\, = \, \frac{\te(-qt, -q\ga, abt\ga)}{\te(aqt\ga, -bt)},
\quad x\in\C\setminus -q^{-\N-1}
\end{equation}
satisfies $\Phi_\ga\in V_{\mu(\ga)}$ for $\ga \notin (at)^{-1}q^\Z\cup -b^{-1}q^{1+\N}$
for generic values of $b$ ($b\notin q^{2+\N}$). For this note that the simple
poles $\ga\in -q^{-\N-1}$ of $\psi_\ga(x)$ are canceled by zeroes of $c(\ga)$.

Next we want to derive an explicit expression for $\Phi_\ga(tq^k)$.
We use \cite[(III.31)]{GaspR} with
$(A,B,C,Z) = (-b\ga,-a\ga,ab\ga x, -1/\ga)$ and multiplying by $(ab\ga x, -1/\ga;q)_\infty$ and using \eqref{eq:phi}, \eqref{eq:psi}, this gives
\[
\begin{split}
(a,b;q)_\infty \phi_\ga(x)\, =\, &e_\ga(x) \frac{\te(-bx)}{\te(-b\ga)}\psi_x(\ga)\, -\, \\
 &\frac{ (-ax, -a\ga,-1/\ga,q^2/ab\ga x;q)_\infty \te(b)}{(-q/bx,-q/b\ga;q)_\infty \te(a \ga x) }\,
\rphis{2}{1}{-q/ax, -q/bx}{q^2/ab\ga x}{q,-\frac{1}{\ga}}
\end{split}
\]
where
$e_\ga(x) \, = \, \frac{ \te(-q\ga,-qx,ab\ga x) }{\te(aq\ga x,-bx) }$.
Now $\frac{\te(-bx)}{\te(-b\ga)}\psi_x(\ga)\, = \, \psi_\ga(x)$ by the
definition of $\psi_\ga$.  Moreover $e_\ga$ is a $q$-periodic function, so that
restricted to $x$ in $tq^\Z$ it gives a constant, which is $c(\ga)$.
From this calculation we find for $|\ga|>1$
\begin{equation}\label{eq:defPhi}
\Phi_\ga(x) \, = \, \frac{ (-ax, -a\ga,-1/\ga,q^2/ab\ga x;q)_\infty \te(b)}{(-q/bx,-q/b\ga;q)_\infty \te(a \ga x) } \rphis{2}{1}{-q/ax, -q/bx}{q^2/ab\ga x}{q;-\frac{1}{\ga}}, \quad x\in tq^\Z.
\end{equation}
This expression can also be used to show that $\Phi_\ga$ is a solution
to the eigenvalue equation by \cite[Exer.~1.12(ii), 1.13]{GaspR}.
By Jackson's  transformation \cite[(III.4)]{GaspR} for $x\in tq^\Z$
\begin{equation}\label{eq:defPhi2}
\Phi_\ga(x)\, = \, \frac{(-ax,-a\ga, q^2/ab\ga x;q)_\infty\, \te(b)}
{(-q/bx, -q/b\ga, a\ga x;q)_\infty}\,
\rphis{2}{2}{-q/ax, -q/a\ga}{q^2/ab\ga x, q/a\ga x}{q, \frac{q}{b\ga x}}.
\end{equation}
Then \eqref{eq:defPhi2} is valid $(-q/bx, -q/b\ga;q)_\infty\, \te(a\ga x)\not=0$.
From \eqref{eq:defPhi2} we get the asymptotic
behaviour.

\begin{lemma}\label{lem:asymptoticPhi} For $\ga\in \C$ so that
$(-q/bx, -q/b\ga;q)_\infty\, \te(a\ga t)\not=0$,
\begin{equation*}
\Phi_\ga(tq^k) \, =\,
(-\ga)^k \frac{ (-a\ga;q)_\infty \te(b,-at)}{(-q/b\ga;q)_\infty \te(at\ga)}
\Big(1+\cO(q^{-k})\Big),  \qquad k\to -\infty.
\end{equation*}
\end{lemma}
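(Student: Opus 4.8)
The plan is to start from the ${}_2\varphi_2$-representation \eqref{eq:defPhi2}, substitute $x=tq^k$, and read off the behaviour of each factor as $k\to-\infty$. Since $0<q<1$ and $t>0$ we have $tq^k\to+\infty$ while $q^{-k}\to 0$, so $1/x=\cO(q^{-k})$. Consequently every $q$-shifted factorial whose argument tends to $0$ — namely $(q^2/ab\ga x;q)_\infty$ and $(-q/bx;q)_\infty$ — equals $1+\cO(q^{-k})$, and the series $\rphis{2}{2}{-q/ax,-q/a\ga}{q^2/ab\ga x, q/a\ga x}{q, q/b\ga x}$ likewise tends to its constant term $1+\cO(q^{-k})$, since its numerator parameter $-q/ax$, its base argument $q/b\ga x$, and the reciprocals of its $x$-dependent denominator parameters are all $\cO(q^{-k})$. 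This is where dominated convergence enters, exactly as in the proof of Lemma \ref{lem:propertiesofphi}(iii), except that here no nontrivial summation formula is needed because the series simply collapses to $1$.

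Next I would treat the two factors whose argument grows in magnitude, $(-ax;q)_\infty$ and $(a\ga x;q)_\infty$. For $|y|\to\infty$ the identity $\te(y)=(y,q/y;q)_\infty$ gives $(y;q)_\infty=\te(y)/(q/y;q)_\infty=\te(y)\bigl(1+\cO(1/y)\bigr)$, so with $y=-atq^k$ and $y=a\ga tq^k$ respectively one obtains $(-ax;q)_\infty=\te(-atq^k)\bigl(1+\cO(q^{-k})\bigr)$ and $(a\ga x;q)_\infty=\te(a\ga tq^k)\bigl(1+\cO(q^{-k})\bigr)$. The theta-product identity \eqref{eq:thetaproduct} then extracts the $k$-dependence:
\[
\te(-atq^k)=(at)^{-k}q^{-\hf k(k-1)}\te(-at),\qquad
\te(a\ga tq^k)=(-a\ga t)^{-k}q^{-\hf k(k-1)}\te(a\ga t).
\]

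Assembling \eqref{eq:defPhi2}, the factors $(-a\ga;q)_\infty\,\te(b)/(-q/b\ga;q)_\infty$ are constant in $x$ and survive untouched, the two occurrences of $q^{-\hf k(k-1)}$ in numerator and denominator cancel, and the power of the base collapses to $(at)^{-k}/(-a\ga t)^{-k}=(-1/\ga)^{-k}=(-\ga)^k$. Collecting the leftover theta factors $\te(-at)/\te(a\ga t)$ with $\te(b)$ yields precisely
\[
\Phi_\ga(tq^k)=(-\ga)^k\,\frac{(-a\ga;q)_\infty\,\te(b,-at)}{(-q/b\ga;q)_\infty\,\te(at\ga)}\bigl(1+\cO(q^{-k})\bigr),
\]
as claimed. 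The only delicate point is the bookkeeping: one must verify that each error term is genuinely of order $q^{-k}$ (equivalently of order $1/x$) so that they amalgamate into a single $1+\cO(q^{-k})$ factor, and one must track the signs in \eqref{eq:thetaproduct} carefully so that the base exponent simplifies to $(-\ga)^k$ rather than $\ga^k$. Beyond this, the argument is just a careful asymptotic reading of \eqref{eq:defPhi2}.
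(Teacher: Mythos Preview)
Your proof is correct and follows exactly the route the paper indicates: the text immediately preceding the lemma says only that ``from \eqref{eq:defPhi2} we get the asymptotic behaviour,'' and you have supplied precisely the missing details of that reading, including the $\te$-function replacement of the two diverging $q$-shifted factorials and the cancellation via \eqref{eq:thetaproduct}. There is nothing to add.
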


\begin{lemma}\label{lem:phiPhiCasoratidet}
\begin{equation*}
D(\Phi_\ga,\phi_\ga)\, =\, -\frac{(1-q)}{t} \frac{ (q/b,-1/\ga,-a\ga;q)_\infty \te(-qt,abt\ga)}{ (a,-q/b\ga;q)_\infty \te(aqt\ga, -bqt)}.
\end{equation*}
\end{lemma}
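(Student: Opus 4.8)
The plan is to reduce the computation to a single limit. Both functions lie in the eigenspace $V_{\mu(\ga)}$: the inclusion $\Phi_\ga\in V_{\mu(\ga)}$ is recorded after \eqref{eq:defPhi1}, while $\phi_\ga\in V_{\mu(\ga)}$ follows from Proposition \ref{prop:Lphi} together with the remark after Lemma \ref{lem:propertiesofphi} that $\phi_\ga(0^+)=\phi_\ga(0^-)$ and $\phi_\ga'(0^+)=\phi_\ga'(0^-)$. Hence Lemma \ref{lem:dimVmu} guarantees that $D(\Phi_\ga,\phi_\ga)$ is constant on $-q^{\N+1}\cup tq^\Z$, so it suffices to evaluate this constant, and the cleanest place to do so is in the limit $k\to-\infty$ along $x=tq^k$, where all three ingredients of $D$ have explicit asymptotics. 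Since the final expression is meromorphic in $\ga$, it is enough to carry this out for $|\ga|>1$ and generic $b$, the general case following by analytic continuation.

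I would insert into the defining formula \eqref{eq:D(f,g)},
\[
D(\Phi_\ga,\phi_\ga)(tq^k)=\big(\Phi_\ga(tq^k)\phi_\ga(tq^{k+1})-\Phi_\ga(tq^{k+1})\phi_\ga(tq^k)\big)\,v(tq^k),
\]
the leading asymptotics from Lemma \ref{lem:asymptoticPhi} ($\Phi_\ga(tq^k)\sim(-\ga)^k P$), Lemma \ref{lem:propertiesofphi}(iii) ($\phi_\ga(tq^k)\sim(-ab\ga t)^{-k}q^{-\hf k(k-1)}Q$) and \eqref{eq:asymptoticsv} ($v(tq^k)\sim(abt)^k q^{\hf k(k-1)}V$), with $P,Q,V$ the displayed leading constants. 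The key observation is that the Gaussian powers $q^{\pm\hf k(k-1)}$ cancel between $\phi_\ga$ and $v$, while the surviving power $(-\ga)^k(-ab\ga t)^{-k}(abt)^k$ collapses identically to $1$. Comparing the two products in the bracket, their quotient is $\cO(q^{-k})$, which tends to $0$ as $k\to-\infty$; hence the bracket is asymptotic to $-\Phi_\ga(tq^{k+1})\phi_\ga(tq^k)$, and after multiplying by $v(tq^k)$ its factor $(-\ga)^{k+1}$ together with this sign yields $D(\Phi_\ga,\phi_\ga)=\ga\,PQV$.

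It then remains to massage $\ga\,PQV$ into the stated closed form. First I would combine the $q$-shifted factorials, using $\te(b)=(b,q/b;q)_\infty$ to rewrite $\te(b)/(b;q)_\infty=(q/b;q)_\infty$; this already reproduces exactly the $q$-shifted factorials in the claim. The remaining work is purely with theta functions: after cancelling the factors $\te(ab\ga t)$, $\te(-qt)$ and $\te(-bqt)$ common to $\ga\,PQV$ and the claimed expression, one is left to verify $\te(-at)\,\te(aqt\ga)=-\ga^{-1}\te(at\ga)\,\te(-atq)$, which is immediate from two applications of $\te(qx)=-x^{-1}\te(x)$, a special case of \eqref{eq:thetaproduct}. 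The residual factor $-\ga^{-1}$ cancels the prefactor $\ga$ and flips the sign, producing the claimed $-\tfrac{1-q}{t}$ in front.

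The main obstacle is the apparent near-cancellation in the bracket: at leading order both products $\Phi_\ga(tq^k)\phi_\ga(tq^{k+1})$ and $\Phi_\ga(tq^{k+1})\phi_\ga(tq^k)$ carry the same exponential-times-Gaussian growth, so one must check that their difference does not degenerate. The resolution is that their ratio equals $\cO(q^{-k})\to0$, so the limit is governed by a single product rather than a genuine cancellation. Because $D(\Phi_\ga,\phi_\ga)$ is already known to be \emph{exactly} constant, the $\cO(q^{-k})$ relative errors in the three asymptotic formulas contribute only vanishing corrections, so this leading-order computation returns the exact value and no control of subleading terms is required.
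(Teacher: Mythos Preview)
Your proposal is correct and follows essentially the same approach as the paper's own proof: both use Lemma \ref{lem:dimVmu} to reduce to a single limit along $tq^k$ as $k\to-\infty$, feed in the asymptotics from Lemmas \ref{lem:propertiesofphi}(iii), \ref{lem:asymptoticPhi} and \eqref{eq:asymptoticsv}, observe that one of the two products in the bracket vanishes in the limit while the other yields $\ga C_\ga$, and then simplify via \eqref{eq:thetaproduct}. Your write-up is in fact slightly more explicit about the theta-function bookkeeping at the end and about why exact constancy of $D$ renders the leading-order computation rigorous.
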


\begin{proof}
Since $\phi_\ga$ and $\Phi_\ga$ are solutions to the eigenvalue
equation and are elements of $V_{\mu(\ga)}$ the Casorati determinant is constant on
by Lemma \ref{lem:dimVmu}.
We find the value of the determinant by letting $k \to -\infty$ in the explicit expression for $D(\Phi_\ga,\phi_\ga)(t q^{k})$, using Lemmas \ref{lem:propertiesofphi} and \ref{lem:asymptoticPhi} for the asymptotic behaviour of $\phi_\ga$ and $\Phi_\ga$, and \eqref{eq:asymptoticsv} for the behaviour of $v$. We have
\[
\begin{split}
\lim_{k \to -\infty} \Phi_\ga(tq^{k+1}) \phi_\ga(tq^k)  v(tq^k)  \,&=\,  C_\ga \lim_{k \to -\infty}(-\ga)^{k+1}  \Big((-ab\ga t^{-k} q^{-\hf k(k-1)}\Big) \Big( (abt)^k q^{\hf k(k-1)}\Big)\\ \, &=\, -\ga C_\ga,
\end{split}
\]
where
\[
C_\ga = \frac{(1-q)}{t}\frac{ (-1/\ga,-a\ga;q)_\infty \te(-qt,b,-at,ab\ga t)}{ (a,b,-q/b\ga;q)_\infty \te(at\ga,-aqt, -bqt)}.
\]
Similarly
\[
\begin{split}
\lim_{k \to -\infty}  \Phi_\ga(t q^{k})\phi_\ga(t q^{k+1}) v(t q^k)
&\, =\,  C_\ga \lim_{k \to -\infty} (-\ga)^{k} \Big((-ab\ga t)^{-k-1} q^{-\hf k(k+1)}\Big) \Big( (abt)^k q^{\hf k(k-1)}\Big) \\
&\, = \frac{C_\ga}{-ab\ga t} \lim_{k \to -\infty} q^{-k} =0.
\end{split}
\]
Now we obtain
\[
D(\Phi_\ga,\phi_\ga)\, =\, \lim_{k \to -\infty} \Big(\Phi_\ga(t q^{k})\phi_\ga(t q^{k+1})- \Phi_\ga(t q^{k+1}) \phi_\ga(t q^k)\Big) v(t q^k)\, =\, \ga C_\ga,
\]
which proves the result using \eqref{eq:thetaproduct}.
\end{proof}


\subsection{Spectral decomposition}\label{ssec:spectraldecomp}

Now that we have the solutions $\phi_\ga$ and $\Phi_\ga$ available we
can calculate the resolvent operator explicitly. From the resolvent
operator we can calculate explicitly the spectral measure, which
leads to a proof of Theorem \ref{thm:LDselfadjointandspectraldecomp}.

We define the Green kernel $K_\ga(x,y)$ for $x,y \in -q^\N\cup tq^\Z$ by
\[
K_\ga(x,y) =
\begin{cases}
\dfrac{\phi_\ga(x)\Phi_\ga(y)}{D(\ga)}, & x \leq y,\\ \\
\dfrac{\phi_\ga(y)\Phi_\ga(x)}{D(\ga)}, & x > y,
\end{cases}
\]
where $D(\ga) = D(\Phi_\ga,\phi_\ga)$, see Lemma \ref{lem:phiPhiCasoratidet} for the explicit expression. Observe that  for $x,y \in -q^\N\cup tq^\Z$ we have $K_\ga(x,\cdot), K_\ga(\cdot,y) \in \cH_t$. In order to determine the spectral decomposition of $L$ it is important to know where the poles of the Green kernel, considered as a function of $\ga$, are situated.

\begin{lemma} \label{lem:polesK}
Denote $S_{sing}=-q^{\N} \cup (1/abt)q^\Z$ and let $x,y \in -q^\N\cup tq^\Z$. Then $\ga \mapsto K_\ga(x,y)$ has simple poles in $S_{sing}$ and is analytic on $\C\setminus S_{sing}$.
\end{lemma}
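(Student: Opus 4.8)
The plan is to locate the poles of $\ga\mapsto K_\ga(x,y)$ by examining the three building blocks separately. Fix $x,y\in -q^\N\cup tq^\Z$ and recall that $-q^\N\subset[-1,0)$ while $tq^\Z\subset(0,\infty)$, so $K_\ga$ always evaluates $\phi_\ga$ at the smaller of $x,y$ and $\Phi_\ga$ at the larger, dividing by $D(\ga)=D(\Phi_\ga,\phi_\ga)$. Since $\phi_\ga$ is entire in $\ga$ (remark after \eqref{eq:phi=2phi2}), every singularity of $K_\ga(x,y)$ comes either from a pole of $\Phi_\ga$ at the larger argument or from a zero of $D(\ga)$, and I would show that these two sources interlock so that exactly the poles on $S_{sing}$ survive.

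First I would read the zero/pole pattern of $D(\ga)$ off the closed form in Lemma~\ref{lem:phiPhiCasoratidet}. The numerator factors $(-1/\ga;q)_\infty$, $(-a\ga;q)_\infty$, $\te(abt\ga)$ give simple zeros of $D$ on $-q^\N$, on $-a^{-1}q^{-\N}$ and on $(abt)^{-1}q^\Z$, while the denominator factors $(-q/b\ga;q)_\infty$ and $\te(aqt\ga)$ give simple poles of $D$ on $-b^{-1}q^{\N+1}$ and on $(at)^{-1}q^\Z$ (using the genericity in $b$ already assumed). Hence $1/D(\ga)$ has simple poles on $-q^\N\cup -a^{-1}q^{-\N}\cup (abt)^{-1}q^\Z$ and simple zeros on $-b^{-1}q^{\N+1}\cup (at)^{-1}q^\Z$. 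Only the poles on $-q^\N\cup (abt)^{-1}q^\Z=S_{sing}$ are admissible; the spurious poles on $-a^{-1}q^{-\N}$ must be killed by zeros of $\Phi_\ga$, and the zeros of $1/D$ must absorb the poles of $\Phi_\ga$.

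Next I would determine $\Phi_\ga$ at the larger argument $z$. For $z\in tq^\Z$ I would use \eqref{eq:defPhi2}, whose ${}_2\vp_2$ converges for all $\ga$: its prefactor and series parameters produce simple poles precisely on $-b^{-1}q^{\N+1}\cup (at)^{-1}q^\Z$, which cancel the zeros of $1/D$, and a simple zero from $(-a\ga;q)_\infty$ on $-a^{-1}q^{-\N}$, which cancels the spurious pole of $1/D$ (the factor $(q^2/ab\ga z;q)_\infty$ only cancels the series poles on $(abt)^{-1}q^{2+\N}$ and contributes nothing else), so for $z\in tq^\Z$ everything off $S_{sing}$ already cancels. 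For $z\in -q^\N$ I would use the additive form \eqref{eq:defPhi1} and track $c(\ga)\psi_\ga(z)$: the poles of $\psi_\ga$ on $-q^{-\N-1}$ are cancelled by the zeros of $c(\ga)$, leaving $\Phi_\ga(z)$ again with poles only on $-b^{-1}q^{\N+1}\cup (at)^{-1}q^\Z$, which cancel the zeros of $1/D$ as before; the zero on $-a^{-1}q^{-\N}$ is, however, not visible here. Since $c(\ga)$ vanishes on all of $S_{sing}$ one has $\Phi_\ga=(a,b;q)_\infty\phi_\ga$ there, so the residues are nonzero multiples of $\phi_\ga(x)\phi_\ga(y)$ and the poles on $S_{sing}$ are genuine and simple. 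I must avoid \eqref{eq:defPhi} near $\ga\in -q^\N$, where $|{-1/\ga}|\ge1$ lies outside its region of convergence.

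The main obstacle is therefore to show that $\Phi_{\ga_0}(z)=0$ at every $\ga_0\in -a^{-1}q^{-\N}$ also for $z\in -q^\N$. Here I would argue through the Casorati determinant: by Lemma~\ref{lem:phiPhiCasoratidet}, $D(\Phi_{\ga_0},\phi_{\ga_0})=0$, and by \eqref{eq:defPhi2} we already know $\Phi_{\ga_0}\equiv0$ on $tq^\Z$; since $\Phi_{\ga_0}\in V_{\mu(\ga_0)}$, the injectivity of the restriction in Lemma~\ref{lem:dimVmu} then forces $\Phi_{\ga_0}\equiv0$ on $-q^{\N+1}\cup tq^\Z$. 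For the remaining point $z=-1$, which enters only through the single case $x=y=-1$ (as $-q^\N\subset[-1,0)$), I would use that $D(\Phi_{\ga_0},\phi_{\ga_0})$ is constant across $x=-1$ as well: the telescoping computation of Lemma~\ref{lem:trinprod(f,g)=D(f,g)} applied at $x=-q$, where both functions satisfy the eigenvalue equation, gives $D(\Phi_{\ga_0},\phi_{\ga_0})(-1)=D(\Phi_{\ga_0},\phi_{\ga_0})(-q)=0$; expanding via \eqref{eq:D(f,g)} with $\Phi_{\ga_0}(-q)=0$, $\phi_{\ga_0}(-q)\neq0$ and $v(-1)\neq0$ yields $\Phi_{\ga_0}(-1)=0$. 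This produces, in every case, the simple zero cancelling the spurious pole at $-a^{-1}q^{-\N}$, so $K_\ga(x,y)$ is analytic off $S_{sing}$ and has only simple poles on $S_{sing}$, as claimed.
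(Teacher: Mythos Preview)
Your argument follows the same route as the paper's: $\phi_\ga$ is entire, so all singularities come from the poles of $\Phi_\ga$ and the zeros of $D(\ga)$; you then read off both from Lemma~\ref{lem:phiPhiCasoratidet} and \eqref{eq:defPhi1}/\eqref{eq:defPhi2}, observe that the poles of $\Phi_\ga$ on $-b^{-1}q^{\N+1}\cup (at)^{-1}q^\Z$ are absorbed by poles of $D$, and show that the extra zeros of $D$ on $-a^{-1}q^{-\N}$ are killed because $\Phi_{\ga_0}$ vanishes identically on $tq^\Z$ and hence, by Lemma~\ref{lem:dimVmu}, on $-q^{\N+1}\cup tq^\Z$. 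This is exactly the paper's proof.

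Your treatment of the single residual point $z=-1$ is more elaborate than necessary and introduces an unverified hypothesis. The paper's one-word ``hence'' is justified more directly: since $\Phi_{\ga_0}\in V_{\mu(\ga_0)}$ satisfies the eigenvalue equation at $x=-q$, and $B(-q)=\frac{1-q}{q}\neq0$, the relation $A(-q)\bigl[\Phi_{\ga_0}(-q^2)-\Phi_{\ga_0}(-q)\bigr]+B(-q)\bigl[\Phi_{\ga_0}(-1)-\Phi_{\ga_0}(-q)\bigr]=\mu(\ga_0)\Phi_{\ga_0}(-q)$ together with $\Phi_{\ga_0}(-q)=\Phi_{\ga_0}(-q^2)=0$ forces $\Phi_{\ga_0}(-1)=0$. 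Your Casorati route reaches the same conclusion but leans on the side claim $\phi_{\ga_0}(-q)\neq0$, which you do not check (and which need not hold for every $\ga_0\in -a^{-1}q^{-\N}$ without a genericity assumption); replacing that step by the direct recurrence argument above removes the gap.
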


\begin{proof}
Fix $x$, $y$ and denote $\cK(\ga) = K_\ga(x,y)$. Recall that $\ga\mapsto\phi_x(\ga)=\phi_\ga(x)$ is an entire function, so the only poles of $\cK$ are poles of $\ga\mapsto\Phi_\ga(x)$ or zeroes of the Casorati determinant $D(\cdot)$
and poles of $D$ may cancel possible poles of $\Phi_\cdot(x)$.
The poles of $\Phi_\cdot(x)$ are $-b^{-1}q^{\Z_{\geq 1}} \cup (at)^{-1}q^\Z$ by the
discussion in Section \ref{ssec:asymptoticsols}.
The poles of $D$ come from the factor $(-q/b\ga;q)_\infty \te(atq\ga)$ in the denominator of $D$. So the poles are simple and they lie in $-b^{-1}q^{\Z_{\geq 1}} \cup (at)^{-1}q^\Z$. Consequently, the poles of $D$ cancel the poles of $\Phi_\cdot(x)$, so the poles of $\Phi_\cdot(x)$ do not contribute to the poles of $\cK$. The zeroes of $D$ are in $-q^{\Z_{\geq 0}} \cup (-1/a)q^{\Z_{\leq 0}} \cup (1/abt)q^{\Z}$, which follows from Lemma \ref{lem:phiPhiCasoratidet}.  We assume that the parameters are generic, so that the
zeroes are all simple.

From \eqref{eq:defPhi2} it follows that for $\ga \in (-1/a)q^{\Z_{\leq 0}}$ the function $\Phi_\ga$ is identically zero on $tq^\Z$, which implies that it is identically zero on $-q^{\N+1}\cup tq^\Z$
by Lemma \ref{lem:dimVmu}, and hence on $-q^{\N}\cup tq^\Z$. So the zeroes of $D$ in $(-1/a)q^{\Z_{\leq 0}}$ are canceled by zeroes of $\ga\mapsto \Phi_\ga(x)$, so these do not contribute to the poles of $\cK$. We conclude that the poles of $\cK$ are the points in the set $S_{sing}$.
\end{proof}

We can describe the resolvent for $(L,\cD)$ with the Green kernel. We introduce the function $\ga\colon \C \to \C$ by $\ga_\la=-(\la/ab+1)$, so that $\mu(\ga_\la)=\la$.

\begin{prop}
Let $\mu \in \C\setminus \R$ and define $R_\mu \colon \cH_t \to \cF_q$ by
\[
(R_\mu f)(y) = \big\langle f, \overline{K_{\ga_\mu}(\cdot,y)} \big\rangle, \qquad f \in \cH_t,\quad
y \in -q^\N\cup tq^\Z,
\]
then $R_\mu$ is the resolvent of $(L,\cD)$.
\end{prop}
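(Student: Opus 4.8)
The plan is to verify directly that $R_\mu$ maps $\cH_t$ into $\cD$ and is a right inverse of $L-\mu$; since $(L,\cD)$ is self-adjoint by Proposition \ref{prop:Lisselfadjoint} and $\mu\notin\R$, the operator $L-\mu\colon\cD\to\cH_t$ is a bijection, so any such right inverse necessarily equals the resolvent $(L-\mu)^{-1}$. Writing $\ga=\ga_\mu$ and unfolding the inner product as $\langle f,\overline{K_{\ga}(\cdot,y)}\rangle=\int_{-1}^{\infty(t)}f(x)K_\ga(x,y)\,w(x)\,d_qx$, I first split at the diagonal following the definition of the Green kernel, obtaining
\[
(R_\mu f)(y)=\frac{\Phi_\ga(y)}{D(\ga)}\sum_{x\le y}f(x)\phi_\ga(x)\,w(x)(1-q)x+\frac{\phi_\ga(y)}{D(\ga)}\sum_{x>y}f(x)\Phi_\ga(x)\,w(x)(1-q)x,
\]
where the two sums run over the points of $-q^\N\cup tq^\Z$ indicated and $(1-q)x$ is the Jackson mass at $x$.

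The central step is to apply the difference operator $L$ in the variable $y$. Since $\phi_\ga$ solves $L\phi_\ga=\mu\phi_\ga$ on $\R\setminus\{0\}$ (Proposition \ref{prop:Lphi}) and $\Phi_\ga$ solves $L\Phi_\ga=\mu\Phi_\ga$ on $-q^{\N+1}\cup tq^\Z$ (Section \ref{ssec:asymptoticsols}), a direct computation shows that $(L_y-\mu)K_\ga(x,y)=0$ whenever the three shifted arguments $qy,y,y/q$ lie on one side of the diagonal; in particular the two near-diagonal points $y=qx$ and $y=x/q$ contribute nothing, as there the bracketed expressions collapse to $(L\phi_\ga-\mu\phi_\ga)(qx)=0$ and $(L\Phi_\ga-\mu\Phi_\ga)(x/q)=0$. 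Only the diagonal jump at $y=x$ survives, and I evaluate it using that the Casorati determinant $D(\Phi_\ga,\phi_\ga)=D(\ga)$ is constant on $-q^{\N+1}\cup tq^\Z$ (Lemmas \ref{lem:dimVmu} and \ref{lem:phiPhiCasoratidet}) together with the balance identity $v(x/q)=B(x)(1-q)x\,w(x)$, which is read off from the telescoping computation in the proof of Lemma \ref{lem:trinprod(f,g)=D(f,g)}. This yields the reproducing identity
\[
(L_y-\mu)K_\ga(x,y)=\frac{\de_{x,y}}{(1-q)\,x\,w(x)},
\]
so that, since $L_y$ is a finite difference operator and may be pulled through the $q$-integral, $(L-\mu)(R_\mu f)(y)=f(y)$ for all $y\in-q^\N\cup tq^\Z$; the endpoint $y=-1$ is handled separately using $B(-1)=0$ and the relation $(L\phi_\ga)(-1)=\mu\phi_\ga(-1)$ from Proposition \ref{prop:boundarycondforphiat-1}.

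It remains to check $R_\mu f\in\cD$. For square integrability I use the split formula: as $y=tq^k\to\infty$ (that is $k\to-\infty$) the first sum converges to a constant while $\Phi_\ga(y)$ decays, and the tail sum multiplying $\phi_\ga(y)$ tends to zero; combined with the asymptotics of $\Phi_\ga$ from Lemma \ref{lem:asymptoticPhi} and the Gaussian decay \eqref{eq:behaviourwatinfty} of $w$, and the finiteness of $\phi_\ga,\Phi_\ga$ near $0$ (Lemma \ref{lem:propertiesofphi}(ii) and the analogous limit for $\psi_\ga$), this gives $R_\mu f\in\cH_t$. Then $LR_\mu f=\mu R_\mu f+f\in\cH_t$ by the identity just proved. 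Finally, near $y=0$ the partial sums serving as coefficients converge, so $R_\mu f$ is there a fixed linear combination of $\phi_\ga$ and $\Phi_\ga$, both of which lie in $V_{\mu(\ga)}$ and hence satisfy $f(0^-)=f(0^+)$ and $f'(0^-)=f'(0^+)$; therefore $R_\mu f$ does too. Hence $R_\mu f\in\cD$, and by the bijectivity of $L-\mu$ we conclude $R_\mu=(L-\mu)^{-1}$.

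I expect the main obstacle to be the diagonal-jump computation: correctly accounting for which near-diagonal terms survive under $L_y$ and verifying that the Casorati-determinant normalization produces a coefficient exactly equal to $1$, together with the justification that the split sums converge and that $L_y$ may be interchanged with the $q$-integral.
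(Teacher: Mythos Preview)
Your approach is the standard Green-kernel construction of the resolvent for a second-order difference operator, and it is exactly what the paper intends: the paper's own proof is a one-line reference to \cite[Prop.~6.1]{KoelS}, which carries out precisely this argument. The diagonal computation $(L_y-\mu)K_\ga(x,y)=\de_{x,y}/((1-q)xw(x))$, the use of the constancy of the Casorati determinant, and the reduction to bijectivity of $L-\mu$ on $\cD$ for non-real $\mu$ are all correct.

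There is, however, one loose step in your verification that $R_\mu f\in\cH_t$. You assert that as $y=tq^k\to\infty$ ``the first sum converges to a constant while $\Phi_\ga(y)$ decays''. Neither claim is correct in general: from Lemma~\ref{lem:asymptoticPhi} one has $|\Phi_\ga(tq^k)|\sim C|\ga|^{k}$, which \emph{grows} for $|\ga_\mu|<1$ (and $\ga_\mu=-(\mu/ab+1)$ can certainly have modulus below $1$); and since $\phi_\ga\notin\cH_t$ (Lemma~\ref{lem:propertiesofphi}(iii)), the partial sum $\sum_{x\le y}f(x)\phi_\ga(x)w(x)(1-q)x$ need not converge as $y\to\infty$. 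The conclusion is nevertheless correct, but a cleaner justification is: first take $f$ finitely supported, so that $R_\mu f$ is a finite linear combination of the functions $K_\ga(x_i,\cdot)\in\cH_t$ and the domain conditions are immediate; then $R_\mu f=(L-\mu)^{-1}f$ by injectivity of $L-\mu$ on $\cD$. For general $f\in\cH_t$ approximate by finitely supported $f_n$; since $(L-\mu)^{-1}$ is bounded (self-adjointness, $\mu\notin\R$) and $(R_\mu f_n)(y)=\langle f_n,\overline{K_\ga(\cdot,y)}\rangle\to(R_\mu f)(y)$ pointwise, the identification $R_\mu=(L-\mu)^{-1}$ extends. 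Alternatively, a direct Cauchy--Schwarz estimate on the tail works, but it requires more care than you have indicated.
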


\begin{proof}
The proof is the same as the proof of \cite[Prop.~6.1]{KoelS}.
\end{proof}

Using the resolvent $R_\mu$  we can calculate explicitly the spectral measure $E$ for the self-adjoint operator $(L,\cD)$ with the formula,  \cite[Thm.XII.2.10]{DunfS},
\begin{equation} \label{eq:Stieltjes-Perron}
\langle E(\mu_1,\mu_2)f, g \rangle = \lim_{\de \downarrow 0} \lim_{\ep \downarrow 0} \frac{1}{2\pi i} \int_{\mu_1 + \de}^{\mu_2-\de} \Big( \langle R_{\mu+i\ep} f,g \rangle - \langle R_{\mu-i\ep} f,g \rangle \Big) d\mu,
\end{equation}
for $\mu_1<\mu_2$ and $f,g \in \cH$. Using the definition of the Green kernel we have
\begin{equation} \label{eq:intR}
\begin{split}
\langle R_\mu f, g \rangle &= \int_{-1}^{\infty(t)}\int_{-1}^{\infty(t)} f(x)\overline{g(y)} K_{\ga_\mu}(x,y) w(x)w(y) d_qx\, d_qy\\
&= \iint \limits_{x \leq y} \frac{ \phi_{\ga_\mu}(x) \Phi_{\ga_\mu}(y)}{D(\ga_\mu)} \Big(f(x)\overline{g(y)}+f(y)\overline{g(x)}\Big)\big(1-\hf\de_{xy}\big) w(x) w(y) d_qx\,d_qy.
\end{split}
\end{equation}
The Kronecker-delta function $\de_{xy}$ is needed here to prevent the terms on the diagonal $x=y$ from being counted twice. We are now in a position to determine the spectrum and the spectral measure $E$ for the self-adjoint operator $(L,\cD)$.

\begin{prop} \label{prop:spectralmeasure}
The spectrum of the self-adjoint operator $(L,\cD)$ consists of the simple
discrete spectrum $\mu(S_{sing})$ and $\{\mu(0)\}$. Let $\ga \in S_{sing}$ and
assume $\mu_1<\mu_2$ are chosen such that $(\mu_1,\mu_2)\cap \mu(S_{sing}) = \{ \mu(\ga) \}$, then
\[
\langle E(\mu_1,\mu_2) f,g \rangle = ab(a,b;q)_\infty \, \Res{\ga' = \ga}\frac{1}{D(\ga')} \,
\langle f, \phi_\ga \rangle\,  \langle \phi_\ga, g \rangle, \qquad f,g\in\cH_t.
\]
\end{prop}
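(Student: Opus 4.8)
The plan is to read off the spectral measure from the Stieltjes--Perron formula \eqref{eq:Stieltjes-Perron} applied to the explicit resolvent $R_\mu$, exploiting that $\mu\mapsto\langle R_\mu f,g\rangle$ is meromorphic with only simple poles. Fix $\ga\in S_{sing}$ and an interval with $(\mu_1,\mu_2)\cap\mu(S_{sing})=\{\mu(\ga)\}$; note this automatically forces $\mu(0)\notin(\mu_1,\mu_2)$, since $\mu(0)$ is an accumulation point of $\mu(S_{sing})$. By Lemma \ref{lem:polesK} the Green kernel $\ga'\mapsto K_{\ga'}(x,y)$ has simple poles exactly on $S_{sing}$, so for finitely supported $f,g$ the expression \eqref{eq:intR} is a finite sum of meromorphic functions of $\ga'=\ga_\mu$, hence meromorphic in $\mu$ with a single simple pole at $\mu(\ga)$ on a neighbourhood of $[\mu_1,\mu_2]$. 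For such a function the double limit in \eqref{eq:Stieltjes-Perron} reduces to minus the residue, giving $\langle E(\mu_1,\mu_2)f,g\rangle = -\Res{\mu=\mu(\ga)}\langle R_\mu f,g\rangle$. Since $\mu=\mu(\ga')=-ab(1+\ga')$, so that $d\ga_\mu/d\mu=-1/ab$, the change of variables yields
\[
\langle E(\mu_1,\mu_2)f,g\rangle = -\Res{\mu=\mu(\ga)}\langle R_\mu f,g\rangle = ab\,\Res{\ga'=\ga}\langle R_{\mu(\ga')}f,g\rangle,
\]
which is where the prefactor $ab$ enters.

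The heart of the computation is the residue of the Green kernel. I would use \eqref{eq:defPhi1}, $\Phi_{\ga'}=(a,b;q)_\infty\phi_{\ga'}-c(\ga')\psi_{\ga'}$, together with the observation that $c(\ga')=\te(-qt,-q\ga',abt\ga')/\te(aqt\ga',-bt)$ vanishes on all of $S_{sing}$: for $\ga'=-q^n$ the factor $\te(-q\ga')=\te(q^{n+1})$ vanishes, and for $\ga'=q^m/abt$ the factor $\te(abt\ga')=\te(q^m)$ vanishes. As $\psi_{\ga'}$ is regular at these (generic) points, it follows that $\Phi_\ga=(a,b;q)_\infty\phi_\ga$ at $\ga\in S_{sing}$; equivalently, at an eigenvalue the two solutions $\phi_\ga,\Phi_\ga\in V_{\mu(\ga)}$ are forced to be proportional because their Casorati determinant $D(\ga)$ vanishes there (Lemmas \ref{lem:phiPhiCasoratidet} and \ref{lem:dimVmu}). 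Since $\phi_{\ga'}$ is entire and $\Phi_{\ga'}$ is holomorphic near $\ga$, this produces the symmetric, separable residue
\[
\Res{\ga'=\ga}K_{\ga'}(x,y)=(a,b;q)_\infty\,\phi_\ga(x)\phi_\ga(y)\,\Res{\ga'=\ga}\frac{1}{D(\ga')},
\]
independent of the ordering of $x$ and $y$.

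I would then insert this into the first line of \eqref{eq:intR}, interchange the residue with the double $q$-integral, and factor the result. The interchange is the one genuinely analytic point: I would justify it first for finitely supported $f,g$, where the $q$-integrals are finite sums and the residue commutes trivially, and then pass to general $f,g\in\cH_t$ by density, using that $R_\mu$ is bounded off the spectrum so that $f\mapsto\Res{\ga'=\ga}\langle R_{\mu(\ga')}f,g\rangle$ is a continuous bilinear form (alternatively, dominated convergence with the asymptotics of Lemmas \ref{lem:propertiesofphi} and \ref{lem:asymptoticPhi}). Factoring gives the product $\big(\int f\phi_\ga\,w\,d_qx\big)\big(\int\phi_\ga\overline{g}\,w\,d_qy\big)$; because $\ga\in S_{sing}$ is real and $\phi_\ga$ is real on the real axis (for real $a,b$ directly, and for $a=\bar b$ by the symmetry \eqref{eq:dualitysymmetryab}), the first factor is $\langle f,\phi_\ga\rangle$ and the second $\langle\phi_\ga,g\rangle$, so that together with $ab(a,b;q)_\infty$ the claimed formula results.

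Finally, the spectral statement follows from the same analysis. On $\C\setminus(\mu(S_{sing})\cup\{\mu(0)\})$ the Green kernel is holomorphic in $\ga'=\ga_\mu$ and yields a bounded resolvent, so this set lies in the resolvent set and $\si(L)\subseteq\mu(S_{sing})\cup\{\mu(0)\}$. Each $\mu(\ga)$, $\ga\in S_{sing}$, is a genuine eigenvalue: $\phi_\ga\in\cH_t$ precisely when $\ga\in S_{sing}$ (the leading asymptotic coefficient in Lemma \ref{lem:propertiesofphi}(iii) then vanishes), $\phi_\ga$ satisfies the boundary conditions by Lemma \ref{lem:propertiesofphi}(i),(ii), and $L\phi_\ga=\mu(\ga)\phi_\ga\in\cH_t$, so $\phi_\ga\in\cD$; the rank-one residue above exhibits the eigenvalue as simple. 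The point $\mu(0)=-ab$, being the accumulation point of the eigenvalues $\mu(S_{sing})$, lies in the closed spectrum. The main obstacle is the bookkeeping just described---controlling the interchange of limit and $q$-integration and verifying no spectrum hides at the accumulation point $\mu(0)$---both of which follow the template of \cite[\S6]{KoelS}.
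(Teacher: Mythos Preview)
Your proposal is correct and follows essentially the same route as the paper: Stieltjes--Perron applied to the resolvent built from the Green kernel, the key observation that $c(\ga)$ vanishes on $S_{sing}$ so $\Phi_\ga=(a,b;q)_\infty\phi_\ga$ there, the residue producing a rank-one projection, and the factor $ab$ arising from the substitution $\mu\mapsto -ab(1+\ga)$. The paper carries this out via the triangular form of \eqref{eq:intR} and then symmetrizes, whereas you pass directly to the first line of \eqref{eq:intR} and factor; your added care with finitely supported $f,g$ and density is a reasonable way to handle the interchange that the paper leaves implicit.
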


\begin{proof}
From \eqref{eq:Stieltjes-Perron}, \eqref{eq:intR} and Lemma \ref{lem:polesK} we see that the only contribution to the spectral measure $E$ comes from the poles of $\ga\mapsto K_\ga(x,y)$. Assume $\ga$ is such a pole, i.e., $\ga \in S_{sing}$, and let $\mu_1<\mu_2$ be such that $(\mu_1,\mu_2) \cap \mu(S_{sing}) = \{ \mu(\ga) \}$. Then $\Phi_\ga(x)=(a,b;q)_\infty \phi_\ga(x)$ by \eqref{eq:defPhi1}, since the factor $\te(-q\ga,ab\ga t)$ in front of $\psi_\ga$ is equal to zero. So in this case $\phi_\ga \in \cH_t$ which implies that $\phi_\ga$ is an eigenfunction of $L$, hence $\mu(\ga)$ is in the discrete spectrum of $L$. Now \eqref{eq:Stieltjes-Perron} and \eqref{eq:intR} give
$\langle E(\mu_1,\mu_2) f,g \rangle = \frac{1}{2\pi i} \int_\cC \langle R_\mu f,g\rangle d\mu$,
where $\cC$ is a clockwise oriented, rectifiable contour encircling $\mu(\ga)$ once. Applying Cauchy's theorem we obtain
\[
\begin{split}
\langle& E(\mu_1,\mu_2) f,g \rangle =
ab(a,b;q)_\infty \\ &\times \Res{\ga' = \ga}\frac{1}{D(\ga')}\,  \iint_{x \leq y} \phi_{\ga}(x) \phi_{\ga}(y) \Big(f(x)\overline{g(y)}+f(y)\overline{g(x)}\Big)\big(1-\hf\de_{xy}\big) w(x) w(y) d_qx\,d_qy
\end{split}
\]
The factor $ab$ comes from the substitution $\mu \mapsto -ab(1+\ga)$, where the minus sign is canceled by reversing the orientation of $\cC$. The result now follows from symmetrizing the double $q$-integral.

Since the spectrum is closed, $\mu(0)$ must be in the spectrum of $(L,\cD)$.
\end{proof}

Before proving Corollary \ref{cor:thmLDselfadjointandspectraldecomp}, we calculate
the residue of Proposition \ref{prop:spectralmeasure}.

\begin{lemma} \label{lem:Res(1/D)=w}
For $\ga \in S_{sing}$ we have
\[
(ab) (a,b;q)_\infty\,  \Res{\ga' = \ga}\frac{1}{D(\ga')} \, =\, K_t\, |\ga| w(\ga;a,b;q),
\]
where $w$ is the weight function defined by \eqref{eq:defweight}, and
\[
K_t = K_t(a,b;q) = \frac{1}{1-q} \frac{ (a,b;q)_\infty^2 \, \te(-at,-bt) }{ (q;q)_\infty^2\,  \te(-t,-abt) }.
\]
\end{lemma}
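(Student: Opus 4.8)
The plan is to compute the residue at $\ga'=\ga$ directly from the explicit formula for $D(\ga')=D(\Phi_{\ga'},\phi_{\ga'})$ in Lemma \ref{lem:phiPhiCasoratidet}, and then match the result against $|\ga|w(\ga)$. Writing out the Casorati determinant we have
\[
D(\ga')\, =\, -\frac{(1-q)}{t}\frac{(q/b,-1/\ga',-a\ga';q)_\infty\,\te(-qt,abt\ga')}{(a,-q/b\ga';q)_\infty\,\te(aqt\ga',-bqt)}.
\]
Since $\ga\mapsto 1/D(\ga')$ has a simple pole exactly where $D(\ga')$ vanishes, and the zeroes relevant to $S_{sing}=-q^\N\cup(1/abt)q^\Z$ come from the factors $(-1/\ga';q)_\infty$ (vanishing on $-q^\N$) and $\te(abt\ga')$ (vanishing on $(1/abt)q^\Z$) in the numerator of $D$, I would treat these two families of points separately.

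For $\ga\in -q^\N$, say $\ga=-q^n$, the pole of $1/D$ comes from the zero of $(-1/\ga';q)_\infty$ at $\ga'=-q^n$. First I would isolate the offending factor, writing $(-1/\ga';q)_\infty=(1+1/\ga')(-q/\ga';q)_\infty$ near the relevant zero (more generally pulling out the single vanishing factor $(1+q^j/\ga')$), compute its derivative in $\ga'$ at the zero, and invert. The residue of $1/D$ is then $-1/D'(\ga)$ evaluated via the derivative of the single vanishing factor, with all remaining factors evaluated at $\ga'=\ga$. For $\ga\in(1/abt)q^\Z$, the pole comes instead from the zero of $\te(abt\ga')$; here I would use the $\te$-functional equations (recorded in the Notation section and \eqref{eq:thetaproduct}) to extract the simple zero and compute its $\ga'$-derivative, again inverting to get the residue. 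In both cases the computation of $\Res_{\ga'=\ga}1/D(\ga')$ reduces to an explicit derivative of one factor times the reciprocal of the product of the surviving factors.

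The second half of the argument is the algebraic identification: after multiplying the residue by $ab(a,b;q)_\infty$, I must show the result equals $K_t|\ga|w(\ga)$, where $w(\ga)=(-q\ga;q)_\infty/(-a\ga,-b\ga;q)_\infty$. This amounts to collecting the $q$-shifted factorials and $\te$-functions surviving from $D$, simplifying using $\te(x)=\te(q/x)$, $\te(x)=-x\,\te(1/x)$, and the product identity \eqref{eq:thetaproduct}, and checking that the $t$-dependent pieces factor out cleanly into the stated constant $K_t=\frac{1}{1-q}\frac{(a,b;q)_\infty^2\,\te(-at,-bt)}{(q;q)_\infty^2\,\te(-t,-abt)}$ independent of $\ga$. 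A convenient check is to verify the claim at one representative value in each family (for instance $\ga=-1$ in $-q^\N$ and $\ga=1/abt$ in $(1/abt)q^\Z$) and then confirm that the $\ga$-dependence on both sides propagates identically under $\ga\mapsto q\ga$, using \eqref{eq:thetaproduct} to track the powers of $\ga$; since both sides are determined by their value at one point and their behaviour under this shift, equality everywhere on $S_{sing}$ follows.

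The main obstacle I anticipate is the bookkeeping in the $\te$-function simplification: the expression for $D$ mixes $q$-shifted factorials $(-1/\ga,-a\ga;q)_\infty$ with theta-functions $\te(abt\ga,aqt\ga)$, and the $t$-dependence has to cancel precisely between the residue and $w(\ga)$ to produce a $\ga$-independent constant $K_t$. Keeping track of the signs and the half-integer powers $q^{-\hf k(k-1)}$ generated by \eqref{eq:thetaproduct}, and ensuring the two parameter families $-q^\N$ and $(1/abt)q^\Z$ yield the \emph{same} constant $K_t$, is where the calculation is most error-prone; the absolute value $|\ga|$ in the target is a useful consistency signal, since it tells me the sign of $\ga$ in each family must be absorbed correctly when passing from $-\ga C_\ga$-type expressions to the positive quantity $|\ga|w(\ga)$.
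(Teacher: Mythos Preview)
Your proposal is correct and follows essentially the same logic as the paper's proof, but the paper organizes the computation more efficiently. Rather than computing $\Res_{\ga'=\ga}1/D(\ga')$ first and then identifying the result with $K_t|\ga|w(\ga)$, the paper begins by rewriting
\[
\frac{ab(a,b;q)_\infty}{D(\ga)}\, =\, C\, f(\ga)\, w(\ga),\qquad f(\ga)=\frac{\te(atq\ga,-b\ga)}{\te(abt\ga,-q\ga)},
\]
with $C$ a constant independent of $\ga$ and $f$ a $q$-periodic function. This factorization pulls $w(\ga)$ out \emph{before} taking the residue, so one only needs $\Res_{\ga'=\ga}f(\ga')$; the $q$-periodicity of $f$ then immediately reduces each family to a single representative point ($\ga=-1$ and $\ga=1/abt$), with no separate shift-propagation check needed. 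Your ``verify at one point and propagate by $\ga\mapsto q\ga$'' step is precisely this $q$-periodicity in disguise, discovered after the residue rather than exploited before it. Both routes arrive at the same place; the paper's pre-factorization simply spares you most of the bookkeeping you flagged as the main obstacle.
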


\begin{proof}
From Lemma \ref{lem:phiPhiCasoratidet} we have
\[
\frac{(ab)(a,b;q)_\infty}{D(\ga)} = -\frac{abt}{(1-q)} \frac{ (a,a,b,-q\ga;q)_\infty \te(atq\ga, -b\ga,-bqt)}{ (q/b,-a\ga,-b\ga;q)_\infty \te(abt\ga,-q\ga,-qt)} = C\, f(\ga)w(\ga),
\]
where $C=-\frac{abt}{(1-q)} \frac{ (a,a,b;q)_\infty \te(-bqt)}{ (q/b;q)_\infty \te(-qt)}$ is a constant independent of $\ga$,
and $f(\ga) = \frac{\te(atq\ga, -b\ga)}{\te(abt\ga,-q\ga)}$ is the $q$-periodic function given by
For $\ga \in S_{sing}$ we have
$(ab) (a,b;q)_\infty\,  \Res{\ga' = \ga}\frac{1}{D(\ga')} = C \,w(\ga)\,  \Res{\ga' = \ga}f(\ga')$,
so we only need to calculate the residue of $f$. For $\ga=-q^{k} \in -q^{\Z_{\geq 0}}$ we have
\[
\begin{split}
\Res{\ga' = \ga} \frac{1}{f(\ga')} &= \lim_{z \to -1} (zq^k + q^k)f(zq^k )
= q^{k}\lim_{z \to -1} (z+1)f(z)\\
&= q^k\frac{ \te(-atq,b) }{\te(-abt)} \lim_{z \to -1} \frac{z+1}{\te(-qz)}
=\frac{-q^k \te(-atq,b) }{(q;q)_\infty^2\te(-abt)},
\end{split}
\]
which proves the result for $\ga \in -q^{\Z_{\geq 0}}$. Now assume $\ga =q^k/abt \in (1/abt)q^\Z$, then
\[
\begin{split}
\Res{\ga' = \ga} f(\ga') &= \frac{q^{k}}{abt}\lim_{z \to 1/abt} (abtz-1)f(z)
= \frac{q^{k}}{abt}\frac{ \te(-atq,b) }{\te(-abt)} \lim_{z \to 1/abt} \frac{abtz-1}{\te(abtz)}
= \frac{-q^{k}}{abt}\frac{\te(-atq,b) }{(q;q)_\infty^2\te(-abt)},
\end{split}
\]
which gives the result in this case.
\end{proof}

\begin{proof}[Proof of Corollary \ref{cor:thmLDselfadjointandspectraldecomp}]
Assume $\ga,\la \in S_{sing}$. Since $\phi_\ga$ and $\phi_{\la}$ are eigenfunction of a self-adjoint operator with distinct eigenvalues, they are orthogonal in $\cH_t$. In
case $\la=\ga$, pick $\mu_1<\mu_2$ as in Proposition \ref{prop:spectralmeasure}, so that
\[
\langle \phi_\ga,\phi_\ga \rangle \, = \,
\langle E(\mu_1,\mu_2)\phi_\ga,\phi_\ga\rangle\, =\, K_t\, |\ga|w(\ga)\, \langle \phi_\ga, \phi_\ga \rangle^2,
\]
so that $\langle \phi_\ga,\phi_\ga \rangle = \big( K |\ga|w(\ga) \big)^{-1}$.
After a rewrite the orthogonality relations of
Corollary \ref{cor:thmLDselfadjointandspectraldecomp} follow. We have already remarked,
cf. the discussion following Corollary \ref{cor:thmLDselfadjointandspectraldecomp}, that
the orthogonality relations are self-dual. This in particular implies that
$\{ \phi_\ga(\cdot;a,b;q)\colon \ga \in -q^\N \cup q^\Z/abt\}$ is an
orthogonal basis for $\cH_t$.
\end{proof}

\begin{proof}[Proof of Theorem \ref{thm:LDselfadjointandspectraldecomp}] This follows from Proposition \ref{prop:spectralmeasure}, except for the fact that we have not yet established that $\mu(0)$ is not
in the point spectrum. This follows from Corollary \ref{cor:thmLDselfadjointandspectraldecomp}, since
$\mu(0)\in\si_p(L)$ would imply that the dual orthogonality would not be valid.
\end{proof}


\section{Direct proofs}\label{sec:directproofs}

In this section we give direct proofs of the orthogonality relations of
Corollary \ref{cor:thmLDselfadjointandspectraldecomp} using
transformation and summation for basic hypergeometric series. Since
the polynomial part of Corollary \ref{cor:thmLDselfadjointandspectraldecomp}
is already proved in Proposition \ref{prop:orthogpolMeixner}, it suffices
to deal with the case $\la\in q^\Z/abt$. It should be noted that
direct proof actually extends the orthogonality relations of
Corollary \ref{cor:thmLDselfadjointandspectraldecomp} to a more general
set of parameters, since we do not use the fact that Condition \ref{cond:onparameters}
holds. We only need to assume the condition that the $q$-integrals are well defined.

\begin{proof}[Direct proof of Corollary \ref{cor:thmLDselfadjointandspectraldecomp}]
We need to evaluate the $q$-integrals
\[
I(\ga,\la)\, =\, \frac{1}{1-q}\int_{-1}^{\infty(t)} \phi_\ga(x)\, \phi_\la(x)\, w(x)\, d_q x,
\qquad \ga,\la \in -q^\N\cup q^\Z/abt.
\]
The case $\ga,\la\in -q^\N$ has been proved directly using
\eqref{eq:qMeixnerfunctionpols} and Proposition \ref{prop:orthogpolMeixner}. So
we restrict to the case $I_m(\ga)\, = \, I(\ga, 1/abtq^{m-1})$, $m\in\Z$.
Use \eqref{eq:phi=2phi2A} to write
\begin{equation}\label{eq:dirproof1}
\phi_{1/abtq^{m-1}}(x)\, = \, \sum_{n=0}^\infty \frac{(-q^{1-m}/at, -bx;q)_n}{(q,b;q)_n}
\frac{(q^{1+n-m}x/t;q)_\infty}{(a;q)_\infty}\, (-a)^n q^{\hf n(n-1)}
\end{equation}
and so we have for $p\in \N$
\[
\begin{split}
\int_{-1}^{\infty(t)} (-ax;q)_p\, \phi_{1/abtq^{m-1}}(x)\, w(x)\, d_q x \, = \,
\sum_{n=0}^\infty &\frac{(-q^{1-m}/at;q)_n\, (-a)^n q^{\hf n(n-1)}}{(q,b;q)_n\, (a;q)_\infty}
\\ &\times \int_{-1}^{tq^{m-n}} \frac{(-qx, q^{1+n-m}x/t;q)_\infty}{(-aq^px, -bq^nx;q)_\infty}\, d_qx.
\end{split}
\]
The interchange of summations at zero is no problem because of
\eqref{eq:behaviourwatzero}, and for $x=tq^k$, $k\to\infty$, the term
$(q^{1+n-m}x/t;q)_\infty$ in the weight function gives zero for $n-m+k<0$ and the other terms
yield a term $q^{\hf k(k-1)}$ assuring absolute convergence.
For $n,k \in \N$ and $m \in \Z$ we have
\begin{multline}\label{eq:finitecaseofProp}
\frac{1}{1-q}\int_{-q^{k}}^{tq^{m-n}} \frac{ (-q^{1-k}x,
q^{n-m+1}x/t;q)_\infty }{ (-ax,-bq^n x;q)_\infty}\,  d_q x =\\ \frac{
(q,-abtq^m;q)_\infty
 \te(-tq^{m-n}) }{(a,bq^n, -atq^{m-n}, -btq^m;q)_\infty } \frac{ (a,
bq^n;q)_k }{(-abtq^m;q)_k } (tq^{m-n})^k q^{-\hf k(k-1)},
\end{multline}
see the discussion following Lemma \ref{lem:full2psi2sum} and
\eqref{eq:t-=-1oflemmafull2psi2sum}. Using \eqref{eq:finitecaseofProp} and
straightforward manipulations we find
\[
\begin{split}
\int_{-1}^{\infty(t)} (-ax;q)_p\, \phi_{1/abtq^{m-1}}(x)\, w(x)\, d_q x \, = \,
C\, \rphis{1}{1}{-q^{1-m}/at}{-q^{1-p-m}/at}{q, \frac{1}{q^p}} \, = \,
\frac{C\, (q^{-p};q)_\infty}{(-q^{1-p-m}/at;q)_\infty}
\end{split}
\]
which is zero,
by the summation \cite[(II.5)]{GaspR}. Hence, by
\eqref{eq:qMeixnerfunctionpols} we find $I_m(\ga)=0$ for $\ga\in -q^\N$.

In order to deal with the last part, we start with the $q$-integral
\begin{equation}\label{eq:qintdirectproof}
\int_{-1}^{\infty(t)} \phi_\ga(x)\, (-bx;q)_n (q^{1-n+m}x/t;q)_\infty\, w(x)\, d_qx.
\end{equation}
Inserting \eqref{eq:phi=2phi2} in the form
\[
\phi_\ga(x) = \sum_{k=0}^\infty \frac{(-q^{1-k} x, -1/\ga;q)_k
}{(q,a,b;q)_k} (-ab\ga)^k q^{k(k-1)},
\]
and interchanging summations, which is easily justified since the summation
corresponding to the $q$-integral $\int_0^{\infty(t)}$ is a unilateral
sum in this case, we find, using \eqref{eq:finitecaseofProp}, that
\eqref{eq:qintdirectproof} equals
\begin{equation}\label{eq:qintdirectproof2}
\frac{(q,-abtq^m;q)_\infty\, \te(-tq^{m-n})}{(a,bq^n,-atq^{m-n},-btq^m;q)_\infty}
\, \rphis{2}{2}{-1/\ga, bq^n}{b, -abtq^m}{q, ab\ga tq^{m-n}}.
\end{equation}
Now put $\ga= 1/abtq^{r-1}$, $r\in\Z$, so that
using \eqref{eq:dirproof1}, \eqref{eq:qintdirectproof}, \eqref{eq:qintdirectproof2} we find
\begin{equation*}
\begin{split}
I_m(\frac{1}{abtq^{r-1}})\, = \, &
\sum_{n=0}^\infty \frac{(-q^{1-m}/at;q)_n\, (-a)^n q^{\hf n(n-1)}}{(q,b;q)_n\, (a;q)_\infty}
\frac{(q,-abtq^m;q)_\infty\, \te(-tq^{m-n})}{(a,bq^n,-atq^{m-n},-btq^m;q)_\infty} \\
& \qquad \times  \, \rphis{2}{2}{-abtq^{r-1}, bq^n}{b, -abtq^m}{q, q^{1-r+m-n}}.
\end{split}
\end{equation*}
where we interchanging summation and $q$-integration, which can be justified
using the estimates in Lemma \ref{lem:asymptoticPhi} and $\phi_{1/abtq^{r-1}} =
(a,b;q)_\infty^{-1}\, \Phi_{1/abtq^{r-1}}$, see \eqref{eq:defPhi}.

We can transform the ${}_2\vp_2$-series
using \cite[(III.23)]{GaspR} to a terminating ${}_2\vp_1$-series.
Using elementary rewritings and \eqref{eq:thetaproduct} we find
\begin{equation*}
I_m(\frac{1}{abtq^{r-1}}) =
\frac{(q, q^{1+m-r};q)_\infty\, \te(-tq^m)}{(a,a,b,-atq^m,-btq^m;q)_\infty}
\sum_{n=0}^\infty \frac{(-1)^n q^{\hf n(n-1)}}{(q;q)_n}
\, \rphis{2}{1}{q^{-n}, -abtq^{r-1}}{b}{q, q^{1-r+m}}
\end{equation*}
which is zero for $r>m$. In case $r=m$ the ${}_2\vp_1$-series is summable
by the $q$-Vandermonde summation \cite[(II.6)]{GaspR}, and the resulting series
is a summable ${}_1\vp_1$-sum by \cite[(II.5)]{GaspR}. This gives
\[
I_m(\frac{1}{abtq^{m-1}})\, =\, \left(\frac{(q;q)_\infty}{(a,b;q)_\infty}\right)^2
\frac{\te(-tq^m)\, (-abtq^{m-1};q)_\infty}{(-atq^m,-btq^m;q)_\infty}
\]
and collecting the results proves Corollary \ref{cor:thmLDselfadjointandspectraldecomp}
using \eqref{eq:thetaproduct}.
\end{proof}

Another direct proof of the orthogonality is based on Lemma \ref{lem:trinprod(f,g)=D(f,g)}  and
Proposition \ref{prop:Lphi}, and showing that the right hand side of
Lemma \ref{lem:trinprod(f,g)=D(f,g)} vanishes for $f=\phi_\la$, $g=\phi_\ga$. This
gives $\langle\phi_\ga,\phi_\la \rangle=0$ for $\ga\not=\la$.

It is also possible to prove $I_m(\ga)=0$ for arbitrary $\ga$ satisfying
$|\ga| < |1/abtq^{m-1}|$.


\section{Orthogonality relations on $\R$}\label{sec:orthorelsonR}
In Proposition \ref{prop:orthogpolMeixner} we have obtained orthogonality relations for the $q$-Meixner polynomials $m_n$ with respect to the indefinite inner product
\[
(f,g) = \int_{\infty(t_-)}^{\infty(t_+)} f(x) g(x) w(x) d_qx,
\]
where $w$ is the weight functions defined by \eqref{eq:defweight}. In this section we show that there are more functions orthogonal with respect to this indefinite inner product.
Here we assume still that $t_-<0$ and $t_+>0$, but we don't require any other
conditions on the parameters $a$ and $b$ except that $t_\pm q^\Z$ are not zeroes
of the denominator of $w$.

\subsection{Direct proofs on $\R$}\label{ssec:directproofsonR}
We consider the function $\Phi_\ga$ as defined by \eqref{eq:defPhi1} with $t=t_+$. First we study the case $\ga= -q^{1+n}/a$ with $n \in \N$. In this case it follows from \eqref{eq:defPhi} and duality that $\Phi_\ga$ can be expressed in terms of a terminating $_2\varphi_1$-series:
\begin{equation} \label{eq:Phiterminatingseries}
\begin{split}
\Phi_{-q^{1+n}/a}(x)&=\frac{ (-ax, q^{n+1},-q^{1-n}/bx;q)_\infty \te(b)}{(-qx,-q/bx,aq^{-n}/b;q)_\infty } (qx)^n q^{\hf n(n-1)} \rphis{2}{1}{q^{-n}, aq^{-n}/b}{-q^{1-n}/bx}{q;-\frac{1}{x}}\\
&= \frac{ (-ax, q^{n+1};q)_\infty \te(b)}{(-qx,a/b;q)_\infty } \rphis{2}{1}{q^{-n}, -bx}{qb/a}{q,\frac{q^{2+n}}{a}},
\end{split}
\end{equation}
which remains valid for $x\in\R$.
The second expression follows from reversing the order of summation in the first $_2\varphi_1$-series. Comparing this with the polynomials $m_n$ defined in Proposition \ref{prop:orthogpolMeixner} we see that
\[
\Phi_{-q^{1+n}/a}(x)=\frac{(q^2/a;q)_n (-ax, q^{1+n};q)_\infty \te(b) }{ (-qx,a/b;q)_\infty} m_n(ax/q;q^2/a,qb/a;q)
\]
From the orthogonality relations for $m_n$ given in Proposition \ref{prop:orthogpolMeixner} we can now derive orthogonality relations for $\Phi_{-q^{1+n}/a}$.

\begin{prop} \label{prop:orthogonalityPhionR}
For $m,n \in \N$,
\[
(\Phi_{-q^{1+m}/a}, \Phi_{-q^{1+n}/a})  =
\de_{mn} (1-q)\frac{ q^{-n} (q^2/a;q)_n }{ (q, qb/a;q)_n } \frac{ (q;q)_\infty^3\, \te(b)^2\,  \te(qbt_-t_+,t_-/t_+, q^2/a)}{(q^2/a, a/b;q)_\infty\, \te(-qt_-,-qt_+,-bt_-,-bt_+)}.
\]
\end{prop}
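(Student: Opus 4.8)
The plan is to reduce the whole computation to the already-established polynomial orthogonality of Proposition \ref{prop:orthogpolMeixner}, exploiting the explicit identification of $\Phi_{-q^{1+n}/a}$ with a $q$-Meixner polynomial displayed just before the statement. Writing the envelope factor as $E_n(x) = \frac{(q^2/a;q)_n(-ax,q^{1+n};q)_\infty\,\te(b)}{(-qx,a/b;q)_\infty}$, we have $\Phi_{-q^{1+n}/a}(x) = E_n(x)\, m_n(ax/q; q^2/a, qb/a; q)$. First I would substitute this into the inner product and form the product $\Phi_{-q^{1+m}/a}(x)\,\Phi_{-q^{1+n}/a}(x)\,w(x)$. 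The key observation is that the $x$-dependent part of $E_m(x)E_n(x)w(x)$ collapses: the factor $(-ax;q)_\infty^2/(-qx;q)_\infty^2$ coming from $E_mE_n$, multiplied by the weight $w(x) = (-qx;q)_\infty/(-ax,-bx;q)_\infty$ of \eqref{eq:defweight}, reduces to $(-ax;q)_\infty/[(-qx;q)_\infty(-bx;q)_\infty]$, which is precisely $w(ax/q; q^2/a, qb/a; q)$, the orthogonality weight for the shifted Meixner polynomials. Everything else is $x$-independent.

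Next I would change variables $y = ax/q$ in the Jackson $q$-integral. Since $\int_{\infty(t_-)}^{\infty(t_+)} g(ax/q)\,d_qx = \frac{q}{a}\int_{\infty(at_-/q)}^{\infty(at_+/q)} g(y)\,d_qy$, this rescales the integral by $q/a$ and shifts the endpoints to $at_\pm/q$. The integral then becomes exactly the orthogonality integral of Proposition \ref{prop:orthogpolMeixner} for $m_n(\cdot; q^2/a, qb/a; q)$ over $[\infty(at_-/q),\infty(at_+/q)]$. Applying that proposition produces the Kronecker delta $\de_{mn}$ — which for $m\neq n$ already yields the claimed vanishing — together with the normalization $h_n(q^2/a, qb/a) = \frac{q^{-n}(q;q)_n}{(q^2/a, qb/a;q)_n}$ and the evaluation $I(q^2/a, qb/a; at_-/q, at_+/q)$, i.e. the right-hand side of \eqref{eq:c=0oflemmafull2psi2sum} with those arguments.

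Finally, for $m=n$ I would assemble the constant part of $E_n(x)^2$, the Jacobian $q/a$, and the factor $h_n\cdot I$, and simplify. With $A=q^2/a$, $B=qb/a$, $s_\pm=at_\pm/q$ one has $AB\,s_-s_+ = qbt_-t_+$, $s_-/s_+ = t_-/t_+$, $-As_\pm=-qt_\pm$ and $-Bs_\pm=-bt_\pm$, so the theta quotient of \eqref{eq:c=0oflemmafull2psi2sum} feeds directly into the stated closed form. The only genuine simplification beyond bookkeeping is to note, via $\te(x)=(x,q/x;q)_\infty$, that $\te(qb/a)/(qb/a;q)_\infty = (a/b;q)_\infty$, which cancels one power of $(a/b;q)_\infty^{-2}$ from the envelope and leaves the single $(a/b;q)_\infty^{-1}$ together with the isolated $\te(q^2/a)$ and $(q^2/a;q)_\infty^{-1}$ appearing in the answer; the three powers of $(q;q)_\infty$ arise as $(q^{1+n};q)_\infty^2 = (q;q)_\infty^2/(q;q)_n^2$ times the single $(q;q)_\infty$ from $I$. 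I expect the main obstacle to be entirely one of organization: executing the endpoint shift and Jacobian of the Jackson $q$-integral correctly, and then threading the many $q$-shifted factorials and $\te$-factors through the final simplification (using \eqref{eq:thetaproduct} where the shifted endpoints introduce $\te$-shifts) without slips. No new idea is required beyond the reduction to Proposition \ref{prop:orthogpolMeixner}.
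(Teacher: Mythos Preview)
Your proposal is correct and follows essentially the same route as the paper: both proofs use the identification of $\Phi_{-q^{1+n}/a}$ with a rescaled $q$-Meixner polynomial, perform the substitution $y=ax/q$ in the Jackson $q$-integral (with Jacobian $q/a$ and endpoints $at_\pm/q$), and then invoke Proposition~\ref{prop:orthogpolMeixner} with parameters $(q^2/a,qb/a)$. Your write-up merely spells out in more detail the bookkeeping of the final simplification that the paper leaves implicit.
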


\begin{proof}
From the substitution rule
$\int_0^{\infty(z)} f(x)\, d_qx \, =\,  \alpha \int_0^{\infty(z/\alpha)} f(\alpha y) \, d_q y$,
$\al \neq 0$,
we find, using the substitution $y=ax/q$ and Proposition \ref{prop:orthogpolMeixner},
\[
\begin{split}
\int_{\infty(t_-)}^{\infty(t_+)}& \Phi_{-q^{1+m}/a}(x)\Phi_{-q^{1+n}/a}(x) \frac{ (-qx;q)_\infty }{(-ax,-bx;q)_\infty} d_q x \\
=&\,\frac{(q^2/a;q)_m\, (q^2/a;q)_n\, (q^{1+m},q^{1+n};q)_\infty \, \te(b)^2 }{ (a/b;q)_\infty^2} \\
& \times\,  \frac{q}{a} \int_{\infty(at_-/q)}^{\infty(at_+/q)} m_m(y;q^2/a,qb/a;q) m_n(y;q^2/a,qb/a;q) \frac{ (-qy;q)_\infty }{(-q^2y/a,-qby/a;q)_\infty} d_qy \\
=&\,\de_{mn} \frac{q}{a}\left(\frac{(q^2/a;q)_n  (q^{1+n};q)_\infty \te(b) }{ (a/b;q)_\infty}\right)^2 h_n(q^2/a,qb/a)\, I(q^2/a, qb/a;at_-/q,at_+/q).
\end{split}
\]
This proves the result.
\end{proof}

The weight function $w(\,\cdot\,;a,b;q)$ in Proposition \ref{prop:orthogonalityPhionR} is symmetric in $a,b$, but the asymptotic solution $\Phi_\ga(\,\cdot\,;a,b;q)$ is not. Therefore, interchanging $a$ and $b$ in Proposition \ref{prop:orthogonalityPhionR} gives orthogonality relations with respect to $(\cdot,\cdot)$ for yet another set of functions. We define
\[
\begin{split}
\Phi_\ga^\dag(x)\, =\, \Phi_\ga^\dag(x;a,b;q)\,  =\,  K(x)\Phi_\ga(x),\quad
K(x)\, =\, \frac{ \te(-bx,-b\ga,a, a\ga x) }{ \te(-ax, -a\ga, b, b\ga x)},
\end{split}
\]
then it is easily verified using \eqref{eq:defPhi} that $\Phi_\ga^\dag(x;a,b;q) = \Phi_\ga(x;b,a;q)$.
$K$ is a $q$-periodic function, so that $K$ is the constant function $K(t_\pm)$ on $t_\pm q^\Z$, and $\Phi_\ga^\dag$ is actually a multiple of $\Phi_\ga$ on $t_\pm q^\Z$. Now, interchanging $a$ and $b$ in Proposition \ref{prop:orthogonalityPhionR} gives us the following orthogonality relations for $\Phi_{-q^{1+n}/b}^\dag$.

\begin{cor} \label{cor:orthogonalityPhidagonR}
For $m,n \in \N$,
\[
(\Phi_{-q^{1+m}/b}^\dag, \Phi_{-q^{1+n}/b}^\dag)   =
\de_{mn} \, (1-q)\, \frac{ q^{-n} (q^2/b;q)_n }{ (q, qa/b;q)_n } \frac{ (q;q)_\infty^3\,  \te(a)^2 \te(qat_-t_+,t_-/t_+, q^2/b)}{(q^2/b, b/a;q)_\infty\,  \te(-qt_-,-qt_+,-at_-,-at_+)}.
\]
\end{cor}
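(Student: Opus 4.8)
The plan is to deduce this corollary directly from Proposition \ref{prop:orthogonalityPhionR} by exploiting the two symmetries already recorded above: the relation $\Phi_\ga^\dag(x;a,b;q) = \Phi_\ga(x;b,a;q)$ and the invariance of the weight function $w(\cdot;a,b;q)$ under interchanging $a$ and $b$. Since the indefinite inner product $(\cdot,\cdot)$ is built from this symmetric weight, swapping $a$ and $b$ leaves the measure unchanged, so the entire computation can be transported from the $a$-labelled functions to the $b$-labelled ones without any new integral evaluation.

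More precisely, I would first write out the inner product explicitly as
\[
(\Phi_{-q^{1+m}/b}^\dag, \Phi_{-q^{1+n}/b}^\dag) = \int_{\infty(t_-)}^{\infty(t_+)} \Phi_{-q^{1+m}/b}^\dag(x;a,b;q)\, \Phi_{-q^{1+n}/b}^\dag(x;a,b;q)\, w(x;a,b;q)\, d_qx,
\]
and then substitute $\Phi_\ga^\dag(x;a,b;q) = \Phi_\ga(x;b,a;q)$ in both factors together with $w(x;a,b;q) = w(x;b,a;q)$. This recognizes the right-hand side as exactly the inner product $(\Phi_{-q^{1+m}/b}, \Phi_{-q^{1+n}/b})$ evaluated with the roles of $a$ and $b$ interchanged throughout. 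Applying Proposition \ref{prop:orthogonalityPhionR} with $a$ and $b$ swapped then yields the stated expression verbatim.

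The only real care needed is bookkeeping. In Proposition \ref{prop:orthogonalityPhionR} the label is $-q^{1+n}/a$, so under the swap the matching label becomes $-q^{1+n}/b$, which is precisely the label appearing in the corollary; likewise every occurrence of $a$ in the normalization constant---inside $(q^2/a;q)_n$, $(qb/a;q)_n$, $\te(b)^2$, the argument of $\te(qbt_-t_+,t_-/t_+, q^2/a)$, the factor $(q^2/a, a/b;q)_\infty$, and $\te(-bt_-,-bt_+)$---is transformed by $a\leftrightarrow b$ into the corresponding factor of the corollary. I do not expect any genuine obstacle: the content of the corollary is entirely the observation that interchanging $a$ and $b$ produces a genuinely new family of orthogonal functions $\Phi_{-q^{1+n}/b}^\dag$ for the same symmetric weight, and the orthogonality constant is then obtained for free from the already-established Proposition \ref{prop:orthogonalityPhionR}.
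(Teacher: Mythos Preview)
Your proposal is correct and is exactly the approach the paper takes: the corollary is obtained from Proposition~\ref{prop:orthogonalityPhionR} by interchanging $a$ and $b$, using the symmetry $w(x;a,b;q)=w(x;b,a;q)$ of the weight together with the identity $\Phi_\ga^\dag(x;a,b;q)=\Phi_\ga(x;b,a;q)$ recorded just before the corollary. No additional argument is needed.
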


Next we show that the sets $\{m_n \mid n \in \N\}$, $\{ \Phi_{-q^{1+n}/a} \mid n \in \N\}$ and $\{ \Phi_{-q^{1+n}/b}^\dag \mid n \in \N\}$ are orthogonal to each other.

\begin{prop}
For $n,k \in \N$,
\[
( \Phi_{-q^{1+n}/a},  m_k) = ( \Phi_{-q^{1+n}/b}^\dag,  m_k)  =
 (\Phi_{-q^{1+n}/a} , \Phi_{-q^{1+k}/b}^\dag) = 0.
\]
\end{prop}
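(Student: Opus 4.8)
The plan is to recognise all three families as eigenfunctions of the \emph{single} difference operator $L$ and then to run a Sturm--Liouville orthogonality argument for the bilinear form $(\cdot,\cdot)$. The operator $L$ of \eqref{eq:defL} is symmetric in $a$ and $b$, since $A(x)=(a+\frac1x)(b+\frac1x)$ and $B$ is independent of $a,b$. Hence $m_k=\phi_{-q^k}$, $\Phi_{-q^{1+n}/a}$ and $\Phi^\dag_{-q^{1+k}/b}=\Phi_{-q^{1+k}/b}(\cdot;b,a;q)$ all satisfy $Lh=\mu h$ on $\R\setminus\{0\}$, with eigenvalues $\mu(-q^k)=-ab(1-q^k)$, $\mu(-q^{1+n}/a)=-ab+bq^{1+n}$ and $\mu(-q^{1+k}/b)=-ab+aq^{1+k}$ respectively; here I use Proposition \ref{prop:Lphi} for $m_k$, the construction of Section \ref{ssec:asymptoticsols} together with the closed form \eqref{eq:Phiterminatingseries} (valid on all of $\R$) for $\Phi_{-q^{1+n}/a}$, and the $a\leftrightarrow b$ symmetry for $\Phi^\dag$. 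For generic $a,b$ these eigenvalues are pairwise distinct across the three families.

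First I would record the bilinear Lagrange identity underlying Lemma \ref{lem:trinprod(f,g)=D(f,g)}: dropping the complex conjugate, for $f,g\in\cF_q$ and real $x\neq0$,
\[
\bigl((Lf)(x)g(x)-f(x)(Lg)(x)\bigr)(1-q)x\,w(x)=D(f,g)(x/q)-D(f,g)(x),
\]
with the bilinear Casorati determinant $D(f,g)(x)=\bigl(f(x)g(qx)-f(qx)g(x)\bigr)v(x)$. Summing this over $t_+q^\Z$ and over $t_-q^\Z$ and using the definition of $\int_{\infty(t_-)}^{\infty(t_+)}$, the two sums telescope to
\[
(Lf,g)-(f,Lg)=\bigl[D(f,g)(+\infty)-D(f,g)(0^+)\bigr]-\bigl[D(f,g)(-\infty)-D(f,g)(0^-)\bigr],
\]
where the boundary values are the limits of $D(f,g)(t_\pm q^j)$ as $j\to-\infty$ (for $\pm\infty$) and as $j\to+\infty$ (for $0^\pm$).

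Next I would show that all four boundary terms drop out. Each of $m_k,\Phi_{-q^{1+n}/a},\Phi^\dag_{-q^{1+k}/b}$ is analytic in a neighbourhood of $0$ (their closed forms have no pole there), so $f(x)g(qx)-f(qx)g(x)$ vanishes to first order at $x=0$ while $v(x)\sim(1-q)/x$; hence $x\mapsto D(f,g)(x)$ extends analytically across $0$ and its one-sided limits agree, $D(f,g)(0^+)=D(f,g)(0^-)$, so those two terms cancel. For the terms at $\pm\infty$ I would combine the asymptotics of the functions---Lemma \ref{lem:asymptoticPhi} for $\Phi$ (and its $a\leftrightarrow b$ analogue for $\Phi^\dag$), and the polynomial growth of $m_k$---with the super-exponential decay $v(t_\pm q^j)=\cO\bigl((\text{const})^j q^{\frac12 j(j-1)}\bigr)$ from \eqref{eq:asymptoticsv}. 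Since each factor entering $D(f,g)$ grows at most like $(\text{const})^j$ whereas $v$ carries the Gaussian factor $q^{\frac12 j(j-1)}\to0$, we obtain $D(f,g)(\pm\infty)=0$. Therefore $(Lf,g)-(f,Lg)=(\mu_f-\mu_g)(f,g)=0$, and distinctness of the eigenvalues forces $(f,g)=0$ for each of the three pairings.

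The main obstacle I anticipate is bookkeeping rather than a genuine difficulty: getting the signs right in the bilateral telescoping, and confirming that all three families really solve $Lh=\mu h$ on both half-lines $t_\pm q^\Z$ (and not merely where they were originally introduced). The non-generic parameter values, where two of the eigenvalues collide, are handled at the end by analytic continuation in $a,b$: the pairings are analytic in the parameters and vanish on the generic set, hence vanish identically.
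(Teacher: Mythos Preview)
Your argument is correct, but it follows a genuinely different route from the paper's proof of this proposition. The paper proceeds by direct computation: first it observes from \eqref{eq:c=0oflemmafull2psi2sum} that $\int_{\infty(t_-)}^{\infty(t_+)} p(x)/(-cx;q)_\infty\,d_qx=0$ for any polynomial $p$ (the right-hand side contains $\te(q^{1+k})=0$), and then uses the closed form \eqref{eq:Phiterminatingseries} to factor each product $\Phi_{-q^{1+n}/a}\,m_k\,w$, respectively $\Phi_{-q^{1+n}/a}\,\Phi^\dag_{-q^{1+k}/b}\,w$, as a polynomial divided by a single infinite product $(-cx;q)_\infty$. This avoids any appeal to $L$ and works uniformly in the parameters, with no genericity assumption.

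Your Sturm--Liouville route is exactly the method the paper deploys \emph{later}, in \S\ref{ssec:resolventproofsonR}, to prove the companion orthogonality (Proposition~\ref{prop:orthogonalityPhiganothers}); you have in effect anticipated that argument and applied it here. What it buys is conceptual uniformity: one mechanism handles all three pairings at once. What the paper's direct argument buys is that it sidesteps the boundary analysis and the genericity/continuation step entirely. One small caveat on your last paragraph: the analytic-continuation-in-$(a,b)$ argument needs a word of care, since $\Phi_{-q^{1+n}/a}$ and $\Phi^\dag_{-q^{1+k}/b}$ depend on $a,b$ both through the eigenvalue label and through the prefactors $(a/b;q)_\infty^{-1}$, $(b/a;q)_\infty^{-1}$ in \eqref{eq:Phiterminatingseries}; the pairings are therefore only meromorphic in $(a,b)$, and you should phrase the continuation accordingly (vanishing of a meromorphic function on an open set).
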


\begin{proof}
First observe that by \eqref{eq:c=0oflemmafull2psi2sum}
\[
\int_{\infty(t_-)}^{\infty(t_+)} \frac{ (-qx;q)_k }{(-cx;q)_\infty} d_q x=\int_{\infty(t_-)}^{\infty(t_+)} \frac{ (-qx;q)_\infty }{(-q^{1+k}x,-cx;q)_\infty} d_q x =0,\qquad k \in \N,
\]
for any $c \not\in -t_\pm^{-1} q^\Z$, because of the factor $\te(q^{1+k})$ in Lemma
\ref{lem:full2psi2sum}.
This result implies that
\begin{equation} \label{eq:int=0}
\int_{\infty(t_-)}^{\infty(t_+)} \frac{ p(x) }{(-cx;q)_\infty} d_q x=0,
\end{equation}
for any polynomial $p$. From \eqref{eq:Phiterminatingseries} it follows that
\[
\Phi_{-q^{1+n}/a}(x)m_k(x)\,  =\,  \frac{ (-ax;q)_\infty }{(-qx;q)_\infty}p(x),
\]
with $p$ a polynomial of degree $k+n$. Now we find
\[
\int_{\infty(t_-)}^{\infty(t_+)} \Phi_{-q^{1+n}/a}(x) \,  m_k(x)\,  \frac{ (-qx;q)_\infty }{(-ax,-bx;q)_\infty}\,  d_q x\,
 = \,  \int_{\infty(t_-)}^{\infty(t_+)} \frac{ p(x) }{(-bx;q)_\infty}\,  d_qx\,  =\, 0,
\]
which proves the first identity. The second identity follows from interchanging $a$ and $b$ in the first one.
For the third identity we note that
\[
\Phi_{-q^{1+n}/a}(x)\,  \Phi_{-q^{1+k}/b}^\dag(x)\,  = \,
\frac{ (-ax,-bx;q)_\infty }{ (-qx;q)_\infty^2 } \, p(x),
\]
with $p$ a polynomial of degree $n+k$. Then applying \eqref{eq:int=0} gives us
\[
\int_{\infty(t_-)}^{\infty(t_+)} \Phi_{-q^{1+n}/a}(x)\,  \Phi_{-q^{1+k}/b}^\dag(x)\,  \frac{ (-qx;q)_\infty }{(-ax,-bx;q)_\infty}\,  d_qx\,  =\,
\int_{\infty(t_-)}^{\infty(t_+)}  \frac{ p(x) }{(-qx;q)_\infty}\,  d_qx\,  =\, 0. \qedhere
\]
\end{proof}

\subsection{Indirect proofs using spectral analytic ideas}\label{ssec:resolventproofsonR}
In this section we prove orthogonality relations with respect to $(\,\cdot\,,\,\cdot\,)$ for $\Phi_\ga$ with $\ga \in (-1/abt_-t_+)q^\Z$. The proves are inspired by the spectral analytic method from Section \ref{sec:spectraldecomposition}, but we don't use spectral theory for self-adjoint operators here.

First we need an analogue of Lemma \ref{lem:dimVmu}.

\begin{lemma}\label{lem:dimVmuR}
For $\mu\in \C$ we define
\[
V_\mu \, = \, \{f:t_-q^\Z \cup t_+q^\Z \to \C \mid Lf(x)\, = \, \mu f,\  f(0^+)\,=\, f(0^-), \ f'(0^+)\,=\, f'(0^-)\}.
\]
Then $\dim V_\mu\leq 2$. Moreover, for $f_1,f_2\in V_\mu$  the Casorati
determinant $D(f_1,f_2)$ is constant as a function on $t_-q^\Z \cup t_+q^\Z$. In case $\dim V_\mu = 2$, the restriction operators from $V_\mu$ to the spaces $\{f:t_-q^\Z \cup t_+q^\Z \to \C \mid Lf(x)\, = \, \mu f(x)\ \text{for}\
x\in t_\pm q^\Z\}$ are bijections.
\end{lemma}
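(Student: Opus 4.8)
The plan is to follow the proof of the one-sided analogue Lemma \ref{lem:dimVmu}, which itself follows \cite[Lemma~3.1, Prop.~3.2, Cor.~3.3]{KoelS}, transporting the argument from the lattice $-q^\N\cup tq^\Z$ (with a distinguished boundary point at $-1$) to the symmetric two-sided lattice $t_-q^\Z\cup t_+q^\Z$, on which $L$ acts as a genuine second-order $q$-difference operator. Throughout I work in the generic situation $-1\notin t_-q^\Z$, so that $B$ does not vanish on $t_-q^\Z$; together with the standing hypothesis that $t_\pm q^\Z$ avoids the zeros $-1/a,-1/b$ of $A$, this guarantees that the equation $Lf=\mu f$ has non-vanishing leading coefficients on each of $t_-q^\Z$ and $t_+q^\Z$, so that its solution space on each single lattice is exactly two-dimensional.

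The analytic heart of the argument is the local behaviour at the accumulation point $0$. Multiplying the eigenvalue equation by $x^2$ and using $x^2A(x)\to1$, $x^2B(x)\to q$ as $x\to0$ in \eqref{eq:defL}, one finds that near $0$ the equation degenerates to $f(qx)+qf(x/q)=(1+q)f(x)$, whose two fundamental behaviours are the regular ones $f\sim 1$ and $f\sim x$. I would first record, exactly as in \cite[Lemma~3.1]{KoelS}, that consequently every solution of $Lf=\mu f$ on $t_+q^\Z$ (resp. $t_-q^\Z$) has well-defined limits $f(0^+),f'(0^+)$ (resp. $f(0^-),f'(0^-)$), and that the linear map sending such a solution to its boundary data $(f(0^\pm),f'(0^\pm))\in\C^2$ is an isomorphism. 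The main obstacle lies precisely here: since the two indicial exponents differ by the integer $1$ there is a potential resonance, and one must check that no logarithmic-type solution occurs. I would verify this by the explicit recursive normalisation of \cite{KoelS}, noting that the genuine solutions $\phi_\ga$ and $\psi_\ga$ of Sections~\ref{ssec:qMeixnerfunctions}--\ref{ssec:asymptoticsols} already exhibit both regular behaviours at $0$.

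Granting this local isomorphism, the bound $\dim V_\mu\le 2$ follows from injectivity of restriction to a single lattice. Indeed, if $f\in V_\mu$ restricts to $0$ on $t_+q^\Z$, then $f(0^+)=f'(0^+)=0$; the matching conditions $f(0^-)=f(0^+)$ and $f'(0^-)=f'(0^+)$ force $f(0^-)=f'(0^-)=0$, and the isomorphism on $t_-q^\Z$ then gives $f\equiv 0$ there as well, so $f=0$. Hence restriction to $t_+q^\Z$ is an injection into the two-dimensional solution space. For the Casorati determinant, the algebraic identity derived in the proof of Lemma \ref{lem:trinprod(f,g)=D(f,g)} reads $D(f_1,f_2)(x/q)-D(f_1,f_2)(x)=\bigl((Lf_1)f_2-f_1(Lf_2)\bigr)(x)\,(1-q)x\,w(x)$ for any $f_1,f_2\in\cF_q$, and vanishes when both are eigenfunctions with the same eigenvalue $\mu$; thus $D(f_1,f_2)$ is constant along each lattice $t_\pm q^\Z$ separately. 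To identify the two constants I would use the second form in \eqref{eq:D(f,g)}, namely $D(f_1,f_2)=\bigl((D_qf_1)f_2-f_1(D_qf_2)\bigr)u$ with $u(x)\to(1-q)^2$ as $x\to0$, and pass to the limits $x\to0^\pm$; the matching of values and $q$-derivatives at $0$ makes $\lim_{x\to0^+}D(f_1,f_2)=\lim_{x\to0^-}D(f_1,f_2)$, so a single constant is shared on $t_-q^\Z\cup t_+q^\Z$.

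Finally, when $\dim V_\mu=2$ the map sending $f\in V_\mu$ to its restriction $f|_{t_+q^\Z}$, a solution of $Lf=\mu f$ on $t_+q^\Z$, is an injective linear map between two spaces of dimension $2$, hence a bijection; the symmetric argument, interchanging the roles of $t_+$ and $t_-$, yields the corresponding bijection onto the solutions on $t_-q^\Z$. This settles all three assertions. The only point requiring genuine care beyond bookkeeping is the resonance in the indicial analysis at $0$; everything else is the linear-algebra skeleton of \cite[\S3]{KoelS} carried over to the two-sided lattice.
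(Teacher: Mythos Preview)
Your proposal is correct and follows precisely the route the paper intends: the paper gives no separate proof of this lemma, merely presenting it as the two-sided analogue of Lemma~\ref{lem:dimVmu} (itself referred to \cite[Lemma~3.1, Prop.~3.2, Cor.~3.3]{KoelS}), and your argument carries out that transport faithfully, including the key gluing of the two lattices at $0$ via the matching conditions to pin down both $\dim V_\mu\le 2$ and the constancy of the Casorati determinant across $t_-q^\Z\cup t_+q^\Z$.
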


The Casorati determinant $D(\cdot,\cdot)$ is defined by \eqref{eq:D(f,g)}. Similar as in Section \ref{sec:spectraldecomposition} it follows that the functions $\phi_\ga$ and $\psi_\ga$ are elements of $V_{\mu(\ga)}$.
We define $\Phi_\ga^+ \in V_{\mu(\ga)}$, respectively $\Phi_\ga^-\in V_{\mu(\ga)}$, as \eqref{eq:defPhi1} with $t=t_+$, respectively $t=t_-$. Explicitly,
\begin{equation}\label{eq:defPhi+-}
\Phi_\ga^\pm(x) \, = \, (a,b;q)_\infty\, \phi_\ga(x) \, - \, c_\pm(\ga)\, \psi_\ga(x),
\quad c_\pm(\ga)\, = \, \frac{\te(-qt_\pm, -q\ga, abt_\pm\ga)}{\te(aqt_\pm \ga, -bt_\pm)}.
\end{equation}
Note that the function $\Phi_\ga$ defined in \eqref{eq:defPhi}  is the function $\Phi_\ga^+$. As in Lemma \ref{lem:asymptoticPhi} we find asymptotic behaviour
\begin{equation} \label{eq:asymptoticsPhi+-}
\Phi_\ga^\pm(t_\pm q^k) = (-\ga)^k \frac{(-a\ga;q)_\infty \te(b, -at_\pm)}{-q/b\ga;q)_\infty \te(at_\pm \ga)}\left(1+\mathcal O(q^{-k})\right), \qquad k \to -\infty,
\end{equation}
so that $\Phi_\ga^+$ is square $q$-integrable on $t_+q^\Z$ with respect to $w$, and similarly $\Phi_\ga^-$ is square $q$-integrable on $t_-q^\Z$.

\begin{lemma} \label{lem:Phi+Phi-Casdet}
\[
D(\Phi^+_\ga,\Phi_\ga^-) = -qbt_+(1-q) \frac{ (-1/\ga,-a\ga;q)_\infty \te(b,-abqt_+t_-\ga,-a\ga, b/q, t_-/t_+) }{ (-q/b\ga;q)_\infty \te(aqt_-\ga,aqt_+\ga, -bt_-, -bt_+)}.
\]
\end{lemma}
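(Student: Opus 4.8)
The plan is to reduce the determinant $D(\Phi^+_\ga,\Phi_\ga^-)$ to the already computed determinant $D(\Phi_\ga,\phi_\ga)$ of Lemma \ref{lem:phiPhiCasoratidet} (taken with $t=t_+$), exploiting that the Casorati determinant \eqref{eq:D(f,g)} is bilinear and antisymmetric and that $\Phi_\ga^+$ and $\Phi_\ga^-$ are built from the \emph{same} pair of solutions $\phi_\ga,\psi_\ga$. First I would record that, since $\Phi_\ga^\pm\in V_{\mu(\ga)}$, Lemma \ref{lem:dimVmuR} guarantees that $D(\Phi_\ga^+,\Phi_\ga^-)$ is constant on $t_-q^\Z\cup t_+q^\Z$, so the quantity to be computed is a genuine constant and may be evaluated by any convenient means.

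From \eqref{eq:defPhi+-} both functions have the form $(a,b;q)_\infty\phi_\ga-c_\pm(\ga)\psi_\ga$, so that $\Phi_\ga^--\Phi_\ga^+=(c_+(\ga)-c_-(\ga))\psi_\ga$. Using bilinearity, antisymmetry, and $D(f,f)=0$, I obtain
\begin{equation*}
D(\Phi_\ga^+,\Phi_\ga^-)=(c_+(\ga)-c_-(\ga))\,D(\Phi_\ga^+,\psi_\ga),
\end{equation*}
and then, solving $\Phi_\ga^+=(a,b;q)_\infty\phi_\ga-c_+(\ga)\psi_\ga$ for $\psi_\ga$ and using $D(\Phi_\ga^+,\Phi_\ga^+)=0$ once more,
\begin{equation*}
D(\Phi_\ga^+,\psi_\ga)=\frac{(a,b;q)_\infty}{c_+(\ga)}\,D(\Phi_\ga^+,\phi_\ga).
\end{equation*}
Substituting the explicit value $D(\Phi_\ga^+,\phi_\ga)=D(\Phi_\ga,\phi_\ga)$ from Lemma \ref{lem:phiPhiCasoratidet} with $t=t_+$ (legitimate, since this determinant involves only $\Phi_\ga^+$ and the $t$-independent $\phi_\ga$), together with the expressions for $c_\pm(\ga)$ from \eqref{eq:defPhi+-}, reduces everything to theta bookkeeping: the $q$-shifted factorials collapse via $(b;q)_\infty(q/b;q)_\infty=\te(b)$ and the ratio $\te(-bt_+)/\te(-bqt_+)=bt_+$ coming from \eqref{eq:thetaproduct}, after which the factor $\te(-q\ga)$ cancels.

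What remains, and where the real work lies, is to combine the difference $c_+(\ga)-c_-(\ga)$ of the two theta quotients into a single theta product. Concretely, one must establish the three-term theta relation
\begin{equation*}
\te(-qt_+,abt_+\ga,aqt_-\ga,-bt_-)-\te(-qt_-,abt_-\ga,aqt_+\ga,-bt_+)=qt_+\,\te(-abqt_+t_-\ga,-a\ga,b/q,t_-/t_+),
\end{equation*}
which I have checked produces exactly the stated expression for $D(\Phi^+_\ga,\Phi_\ga^-)$ after insertion. This is an instance of the fundamental (Riemann) three-term theta identity; I would prove it either by matching parameters against the standard identity, or directly by verifying that both sides, regarded as functions of $\ga$, share the same zero set, transform identically under $\ga\mapsto q\ga$ via \eqref{eq:thetaproduct}, and agree at one convenient value of $\ga$. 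The main obstacle is precisely this theta identity: the algebraic reduction above is routine, but getting the signs and the prefactor $qt_+$ right in the parameter matching requires care.
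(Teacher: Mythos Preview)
Your proposal is correct and follows essentially the same route as the paper: reduce $D(\Phi_\ga^+,\Phi_\ga^-)$ to $(c_+(\ga)-c_-(\ga))\frac{(a,b;q)_\infty}{c_+(\ga)}D(\Phi_\ga^+,\phi_\ga)$, insert Lemma~\ref{lem:phiPhiCasoratidet} with $t=t_+$, and then collapse the resulting difference of theta quotients via a three-term theta relation. The only point where the paper is more explicit is the final step: it invokes the identity $\te(xv,x/v,yw,y/w)-\te(xw,x/w,yv,y/v)=\tfrac{y}{v}\te(xy,x/y,vw,v/w)$ from \cite[Exer.~2.16(i)]{GaspR} and writes down the required substitutions for $x,y,v,w$, rather than leaving the parameter matching as an exercise.
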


\begin{proof}
From the expansion \eqref{eq:defPhi+-} of $\Phi_\ga^-$ in terms of $\phi_\ga$ and $\psi_\ga$ it follows that
\[
D(\Phi^+_\ga,\Phi_\ga^-) = (a,b;q)_\infty D(\Phi_\ga^+, \phi_\ga) - c_-(\ga) D(\Phi_\ga^+, \psi_\ga).
\]
The Casorati determinant $D(\Phi_\ga^+, \phi_\ga)$ is given in Lemma \ref{lem:phiPhiCasoratidet} with $t=t_+$, and from expanding $\psi_\ga$ in terms of $\phi_\ga$ and $\Phi_\ga^+$, see \eqref{eq:defPhi+-}, we obtain
\[
D(\Phi_\ga^+, \psi_\ga) = \frac{(a,b;q)_\infty }{c_+(\ga)} D(\Phi_\ga^+,\phi_\ga).
\]
This gives us the following explicit expression
\[
D(\Phi^+_\ga,\Phi_\ga^-) =(1-q) \frac{ (-1/\ga,-a\ga;q)_\infty \te(b) }{ (-q/b\ga;q)_\infty } \left(t_-^{-1}\frac{ \te(-qt_-,abt_-\ga)}{ \te(aqt_-\ga, -bqt_-)}-t_+^{-1}\frac{ \te(-qt_+,abt_+\ga)}{ \te(aqt_+\ga, -bqt_+)} \right).
\]
Now we apply the identity, see \cite[Exer.2.16(i)]{GaspR},
\[
\te(xv,x/v,yw,y/w)-\te(xw,x/w,yv,y/v) = \frac{y}{v} \te(xy,x/y,vw,v/w)
\]
with
\begin{align*}
x &= ie^{i\be/2} a\ga \sqrt{ -|b|qt_+t_-}, & y&= -ie^{i\be/2}\sqrt{ -|b|qt_+t_-}, \\
v&= ie^{i\be/2}\sqrt{ -\frac{|b|t_-}{qt_+}}, & w &= -ie^{i\be/2}\sqrt{ -\frac{|b|t_+}{qt_-}},
\end{align*}
where $b=|b|e^{i\be}$, then the result follows.
\end{proof}

For $\ga \in (-1/abt_-t_+)q^\Z$ the Casorati determinant in Lemma \ref{lem:Phi+Phi-Casdet} equal zero, hence $\Phi_\ga^+ = k\Phi_\ga^-$ on $t_-q^\Z\cup t_+q^\Z$ for some nonzero constant $k$, so in this case the inner product $(\Phi_\ga^+, \Phi_\ga^+)$ is finite,
since summability at zero is valid as well.
Using \eqref{eq:defPhi+-} we can check that $k=1$, and therefore we omit the superscript $+$ or $-$ in this case.

Let us write $\ga_n = -q^n/abt_-t_+$ for $n \in \Z$. We are going to determine orthogonality relations for the functions $\Phi_{\ga_n}$. We start with an easy result.

\begin{prop} \label{prop:orthogonalityPhiganothers}
Let $at_-t_+,bt_-t_+,abt_-t_+ \not\in q^\Z$, then for $n \in \Z$, $k \in \N$,
\[
(\Phi_{\ga_n},m_k) =  (\Phi_{\ga_n},\Phi_{-q^{1+k}/a})=(\Phi_{\ga_n},\Phi_{-q^{1+k}/b}^\dag)=0.
\]
\end{prop}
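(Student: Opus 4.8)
The plan is to run the discrete Green's identity for $L$ on the two-sided lattice $t_-q^\Z\cup t_+q^\Z$ and to exploit that $\Phi_{\ga_n}$ and each of the three companion functions are $L$-eigenfunctions with \emph{distinct} eigenvalues. Write $g$ for any of $m_k$, $\Phi_{-q^{1+k}/a}$ or $\Phi^\dag_{-q^{1+k}/b}=\Phi_{-q^{1+k}/b}(\,\cdot\,;b,a;q)$. By Proposition \ref{prop:Lphi}, together with the $a\leftrightarrow b$ symmetry of $L$ coming from \eqref{eq:defL}, each such $g$ satisfies $Lg=\mu(\de)g$ on $\R\setminus\{0\}$, with $\de$ equal to $-q^k$, $-q^{1+k}/a$ and $-q^{1+k}/b$ respectively, while $L\Phi_{\ga_n}=\mu(\ga_n)\Phi_{\ga_n}$. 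Since $\mu(\ga)=-ab(1+\ga)$ is injective ($ab\neq 0$), the three hypotheses $abt_-t_+,bt_-t_+,at_-t_+\notin q^\Z$ are exactly what forces $\ga_n\neq\de$, hence $\mu(\ga_n)\neq\mu(\de)$, in the three cases. Summing the pointwise identity underlying the proof of Lemma \ref{lem:trinprod(f,g)=D(f,g)}, now in its bilinear form (no complex conjugation, so no reality of $A,B$ is needed), over a truncated lattice and telescoping gives $(\mu(\ga_n)-\mu(\de))\,(\Phi_{\ga_n},g)_{\mathrm{trunc}}$ equal to four boundary Casorati determinants $D(\Phi_{\ga_n},g)$ as in \eqref{eq:D(f,g)}, evaluated near $0$ and near $\infty$ along each of the rays $t_\pm q^\Z$. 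It then suffices to show these boundary terms vanish in the limit.

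The two contributions coming from $x\to 0$ cancel. Each companion $g$, being assembled from $\phi_\ga$ and $\psi_\ga$, lies in the space $V_{\mu(\de)}$ of Lemma \ref{lem:dimVmuR}, as does $\Phi_{\ga_n}\in V_{\mu(\ga_n)}$ (recall $\Phi_{\ga_n}^+=\Phi_{\ga_n}^-$ for $\ga_n\in(-1/abt_-t_+)q^\Z$ by Lemma \ref{lem:Phi+Phi-Casdet}); hence both satisfy the matching conditions $f(0^+)=f(0^-)$, $f'(0^+)=f'(0^-)$. Writing $D(\,\cdot\,,\,\cdot\,)$ through its $q$-derivative form in \eqref{eq:D(f,g)} and using $u(x)\to(1-q)^2$ as $x\to 0$, the limits of $D(\Phi_{\ga_n},g)$ along $t_+q^k$ and along $t_-q^k$ as $k\to\infty$ both equal $(1-q)^2\big(\Phi_{\ga_n}'(0)g(0)-\Phi_{\ga_n}(0)g'(0)\big)$. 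Since the $t_-$-ray enters $\int_{\infty(t_-)}^{\infty(t_+)}=\int_0^{\infty(t_+)}-\int_0^{\infty(t_-)}$ with the opposite sign, these two equal limits cancel.

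The two contributions coming from $x\to\infty$ vanish individually. On each ray the relevant weight factor decays super-exponentially: $v(t_\pm q^k)\sim (abt_\pm)^k q^{\hf k(k-1)}$ as $k\to-\infty$ by \eqref{eq:asymptoticsv}, whereas $\Phi_{\ga_n}$ grows only geometrically, $\Phi_{\ga_n}(t_\pm q^k)$ being asymptotic to a constant times $(-\ga_n)^k$ by \eqref{eq:asymptoticsPhi+-} (valid on $t_+q^\Z$ via $\Phi^+$ and on $t_-q^\Z$ via $\Phi^-$, which coincide for $\ga=\ga_n$). The companion $g$ likewise grows at most geometrically on both rays: $m_k$ is a polynomial, while the terminating expression \eqref{eq:Phiterminatingseries} shows $\Phi_{-q^{1+k}/a}(t_\pm q^j)\sim\mathrm{const}\cdot(q^{1+k}/a)^j$, and symmetrically $\Phi^\dag_{-q^{1+k}/b}(t_\pm q^j)\sim\mathrm{const}\cdot(q^{1+k}/b)^j$, as $j\to-\infty$. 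Substituting these asymptotics into \eqref{eq:D(f,g)}, the factor $q^{\hf k(k-1)}$ swamps every geometric factor, so $D(\Phi_{\ga_n},g)(t_\pm q^k)\to 0$; the same decay shows $(\Phi_{\ga_n},g)$ converges absolutely, the behaviour near $0$ being harmless since all factors are bounded there.

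Combining the last two paragraphs, the right-hand side of the Green's identity tends to $0$, so $(\mu(\ga_n)-\mu(\de))\,(\Phi_{\ga_n},g)=0$, and distinctness of the eigenvalues forces $(\Phi_{\ga_n},g)=0$ in all three cases. The step needing the most care is the boundary analysis at $\infty$: one must secure the geometric growth of $g$ on the $t_-$-ray as well, where \eqref{eq:asymptoticsPhi+-} is not stated directly but is read off from \eqref{eq:Phiterminatingseries}, and one must check that the two leading geometric terms in the Casorati determinant are individually dominated by $q^{\hf k(k-1)}$ rather than conspiring in a way that could offset the super-exponential decay.
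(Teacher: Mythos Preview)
Your proof is correct and follows essentially the same approach as the paper: both use that the functions are $L$-eigenfunctions with distinct eigenvalues and exploit the symmetry of $L$ via the discrete Green's identity, with boundary terms at $0$ handled by the matching conditions and boundary terms at $\infty$ by asymptotic decay. Your version spells out the boundary analysis more explicitly---in particular the geometric growth of the companions on the $t_-$-ray via \eqref{eq:Phiterminatingseries}---whereas the paper abbreviates by noting $m_k=k_\pm\Phi^\pm_{-q^k}$ and referring back to Section~\ref{sec:spectraldecomposition}.
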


\begin{proof}
First note that by Lemmas \ref{lem:phiPhiCasoratidet} and \ref{lem:Phi+Phi-Casdet} $m_k =\phi_{-q^k} = k_+ \Phi_{-q^{k}}^+=k_-\Phi_{-q^{-k}}^-$ for certain nonzero constants $k_\pm$. Using \eqref{eq:asymptoticsPhi+-} one can now check that the integrals $(L\Phi_{\ga_n},m_k)$, $(L\Phi_{\ga_n},\Phi_{-q^{1+k}/a})$, $(L\Phi_{\ga_n},\Phi_{-q^{1+k}/b}^\dag)$ are finite. All functions in the inner products in the proposition are eigenfunctions of the difference operator $L$ for mutually different eigenvalues. The orthogonality relations follow using the fact that
$L$ is symmetric with respect to $(\cdot,\cdot)$, which is proved completely
analogously as in Section \ref{sec:spectraldecomposition}. For this we also
note that all solutions satisfy $f(0^+)=f(0^-)$ and $f'(0^+)=f'(0^-)$.
\end{proof}

Next we consider the inner products $(\Phi_{\ga_m},\Phi_{\ga_n})$, $m,n \in \Z$. We will prove the following result.

\begin{prop} \label{prop:orthogonalityPhiganm}
For $m,n \in \Z$,
\[
\begin{split}
(\Phi_{\ga_m},\Phi_{\ga_n}) =&\, \de_{mn} \frac{(1-q)t_+}{q}  \frac{(b/q)^n   (q/abt_-t_+;q)_n }{ (1/at_-t_+,1/bt_-t_+;q)_n }\\
& \times \frac{ (q;q)^2 (abt_-t_+, 1/bt_-t_+;q)_\infty }{ (aqt_-t_+;q)_\infty } \frac{ \te(b)^2 \te(bt_-t_+,t_-/t_+) }{ \te(-bt_-,-bt_+)^2}.
\end{split}
\]
\end{prop}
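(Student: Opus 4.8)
The plan is to treat the off-diagonal case $m\neq n$ and the diagonal case $m=n$ separately, following the spectral-analytic philosophy of Section \ref{sec:spectraldecomposition} but replacing the appeal to the spectral theorem (unavailable here, since $(\,\cdot\,,\,\cdot\,)$ need not be positive) by a direct residue computation of an explicit Green kernel. Throughout I use that $\ga_n=-q^n/abt_-t_+$ lies in $(-1/abt_-t_+)q^\Z$, so that by the discussion following Lemma \ref{lem:Phi+Phi-Casdet} one has $\Phi_{\ga_n}^+=\Phi_{\ga_n}^-=\Phi_{\ga_n}$, a single function which by \eqref{eq:asymptoticsPhi+-} decays geometrically at $t_\pm q^k$ as $k\to-\infty$ and hence lies in the $q$-$L^2$ space for $(\,\cdot\,,\,\cdot\,)$.

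For $m\neq n$ I would argue exactly as in Proposition \ref{prop:orthogonalityPhiganothers}. The eigenvalues $\mu(\ga_m)$ and $\mu(\ga_n)$ are distinct, both $\Phi_{\ga_m}$ and $\Phi_{\ga_n}$ are square $q$-integrable and satisfy the matching conditions $f(0^+)=f(0^-)$, $f'(0^+)=f'(0^-)$ of $V_{\mu}$ (Lemma \ref{lem:dimVmuR}), so the $\R$-analogue of Lemma \ref{lem:trinprod(f,g)=D(f,g)} gives $(L\Phi_{\ga_m},\Phi_{\ga_n})=(\Phi_{\ga_m},L\Phi_{\ga_n})$: the Casorati boundary terms at $0^\pm$ cancel by the matching conditions, and those at $t_\pm q^k$, $k\to-\infty$, vanish by square integrability exactly as in Lemma \ref{lem:D(f,g)(infty)}. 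Hence $(\mu(\ga_m)-\mu(\ga_n))(\Phi_{\ga_m},\Phi_{\ga_n})=0$ and the inner product vanishes.

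For the diagonal case I would build the Green kernel $G_\ga(x,y)=\Phi_\ga^-(x_<)\Phi_\ga^+(x_>)/\big((1-q)D(\ga)\big)$, where $x_<=\min(x,y)$, $x_>=\max(x,y)$ in the natural order on $t_-q^\Z\cup t_+q^\Z$ and $D(\ga)=D(\Phi_\ga^+,\Phi_\ga^-)$ is the explicit Casorati determinant of Lemma \ref{lem:Phi+Phi-Casdet}. As in Section \ref{ssec:spectraldecomp}, the constancy of $D$ on the support makes $R_\mu f(y)=\int G_{\ga_\mu}(x,y)f(x)w(x)\,d_qx$ a right inverse of $L-\mu$. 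The factor $\te(-abqt_+t_-\ga)$ of $D$ vanishes at $\ga=\ga_n$, its argument becoming $q^{n+1}$, so $D$ has a simple zero there; consequently $\ga\mapsto G_\ga(x,y)$ has a simple pole at $\ga_n$ with residue $\Phi_{\ga_n}(x)\Phi_{\ga_n}(y)/\big((1-q)D'(\ga_n)\big)$, using $\Phi_{\ga_n}^\pm=\Phi_{\ga_n}$. Applying $R_\mu$ to the eigenfunction $\Phi_{\ga_n}$, for which $R_\mu\Phi_{\ga_n}=(\mu(\ga_n)-\mu)^{-1}\Phi_{\ga_n}$ (verified directly by telescoping, using that $\Phi_{\ga_n}$ solves $Lf=\mu(\ga_n)f$ and that $G_\ga$ is assembled from solutions, rather than via spectral theory), and comparing residues in $\mu$ at $\mu=\mu(\ga_n)$ with Jacobian $\mu'(\ga)=-ab$, one obtains
\[
(\Phi_{\ga_n},\Phi_{\ga_n})\;=\;-(1-q)\,\mu'(\ga_n)\,D'(\ga_n)\;=\;(1-q)\,ab\,D'(\ga_n)
\]
up to the normalization of $G_\ga$, so that the norm is, up to an explicit constant depending only on $q$ and $ab$, the reciprocal of $\Res{\ga=\ga_n}\frac{1}{D(\ga)}$. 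This is precisely the $\R$-counterpart of Lemma \ref{lem:Res(1/D)=w}.

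It then remains to extract $D'(\ga_n)$ from the closed form in Lemma \ref{lem:Phi+Phi-Casdet}. Since only $\te(-abqt_+t_-\ga)$ vanishes at $\ga_n$, the value $D'(\ga_n)$ is the product of the remaining $q$-shifted factorials and $\te$-functions evaluated at $\ga_n$ times $\tfrac{d}{d\ga}\te(-abqt_+t_-\ga)\big|_{\ga_n}$, the latter being elementary from $\te(x)=(x,q/x;q)_\infty$. The main obstacle will be the ensuing $\te$-function bookkeeping: one must apply the product identity \eqref{eq:thetaproduct} repeatedly to convert all factors of the form $\te(\,\cdot\,q^n)$ and $(\,\cdot\,q^n;q)_\infty$ into the finite $q$-Pochhammer symbols $(q/abt_-t_+;q)_n$, $(1/at_-t_+;q)_n$, $(1/bt_-t_+;q)_n$ and the power $(b/q)^n$ of the statement, while separating off the $n$-independent prefactor. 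This is parallel to the computation in the proof of Lemma \ref{lem:Res(1/D)=w}, but heavier because of the larger number of $\te$-arguments in Lemma \ref{lem:Phi+Phi-Casdet}. A secondary technical point, needed to legitimize the residue argument, is the convergence of the $q$-integrals defining $R_\mu$ and the vanishing of the associated boundary Casorati terms, which follows from the asymptotics in \eqref{eq:asymptoticsPhi+-}, Lemma \ref{lem:asymptoticPhi} and Lemma \ref{lem:propertiesofphi}(iii).
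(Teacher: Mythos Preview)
Your proposal is correct and follows essentially the same strategy as the paper: the off-diagonal case is handled exactly as in Proposition~\ref{prop:orthogonalityPhiganothers} via the symmetry of $L$, and the diagonal case by building the Green kernel from $\Phi_\ga^\pm$ and reducing the squared norm to the residue of $1/D(\ga)$ at $\ga_n$, followed by the $\te$-bookkeeping using \eqref{eq:thetaproduct}. The only organisational difference is that the paper, in Lemma~\ref{lem:RmuPhi}, avoids asserting the pointwise identity $R_\mu\Phi_{\ga_n}=(\mu(\ga_n)-\mu)^{-1}\Phi_{\ga_n}$ and instead uses the symmetry of $L$ once more to obtain only the inner-product statement $(R_\mu\Phi_{\ga_n},\Phi_{\ga_n})=(\mu(\ga_n)-\mu)^{-1}(\Phi_{\ga_n},\Phi_{\ga_n})$, then applies Cauchy's theorem and symmetrizes the resulting double $q$-integral to arrive at $(\Phi_{\ga_n},\Phi_{\ga_n})=\bigl(ab\,\Res_{\ga'=\ga_n}\tfrac{1}{D(\ga')}\bigr)^{-1}$; your normalising constants should be checked against this.
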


For $m\neq n$ this is proved in the same way as Proposition \ref{prop:orthogonalityPhiganothers}. For $m=n$ the proof of Proposition \ref{prop:orthogonalityPhiganm} basically mimics the proof for the orthogonality relations from Corollary \ref{cor:thmLDselfadjointandspectraldecomp} given in Section \ref{sec:spectraldecomposition}, but without using any theory for self-adjoint operators on Hilbert spaces.\\

We define for $x,y \in t_-q^\Z \cup t_+q^\Z$
\[
K_\ga(x,y) =
\begin{cases}
\dfrac{\Phi_\ga^-(x)\Phi_\ga^+(y)}{D(\ga)}, & x \leq y,\\ \\
\dfrac{\Phi_\ga^-(y)\Phi_\ga^+(x)}{D(\ga)}, & x > y,
\end{cases}
\]
where $D(\ga) = D(\Phi_\ga^+,\Phi^-_\ga)$. The explicit expression for $D$ is given in Lemma \ref{lem:Phi+Phi-Casdet}. For $x,y \in t_-q^\Z\cup t_+q^\Z$, the functions $K_\ga(x,\,\cdot\,)$ and $K_\ga(\,\cdot\,,y)$ are square integrable on $t_-q^\Z\cup t_+q^\Z$ with respect to $w$. We need to know the location of the poles of $\ga \mapsto K_\ga(x,y)$
and for this we assume that the parameters are chosen generically, i.e.
$a,b,b/a, abt_-t_+, at_-t_+, bt_-t_+ \notin q^\Z$.

\begin{lemma}
For $x,y \in t_-q^\Z\cup t_+q^\Z$ the function $\ga \mapsto K_\ga(x,y)$ has simple poles in
\[
S_{sing}= -q^{\N} \cup (-q/a)q^{\N} \cup  (-q/b)q^{\N} \cup (-1/abt_-t_+) q^\Z,
\]
and is analytic on $\C\setminus S_{sing}$.
\end{lemma}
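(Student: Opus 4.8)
The plan is to mirror the proof of Lemma~\ref{lem:polesK}, the only genuinely new point being that the numerator of the Green kernel is now a product of two \emph{meromorphic} functions rather than one entire and one meromorphic. Fix $x,y\in t_-q^\Z\cup t_+q^\Z$ and set $\cK(\ga)=K_\ga(x,y)$; up to interchanging $x$ and $y$ this is $\Phi_\ga^-(x)\,\Phi_\ga^+(y)/D(\ga)$ with $D(\ga)=D(\Phi_\ga^+,\Phi_\ga^-)$ given explicitly in Lemma~\ref{lem:Phi+Phi-Casdet}. Since $\ga\mapsto\phi_\ga(x)$ is entire, the candidate singularities of $\cK$ are the poles of $\ga\mapsto\Phi_\ga^\pm(\cdot)$ and the zeroes of $D$, while poles of $D$ may cancel poles of the numerator. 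The whole argument is then a matter of comparing the \emph{orders} of these zeroes and poles at each relevant lattice point, and because the numerator is a product it may carry double poles; this is exactly what produces the term $(-q/b)q^\N$ in $S_{sing}$.

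First I would locate the poles of $\Phi_\ga^\pm$. From \eqref{eq:defPhi+-}, $\Phi_\ga^\pm=(a,b;q)_\infty\phi_\ga-c_\pm(\ga)\psi_\ga$; the simple poles of $\psi_\ga$ at $-q^{-\N-1}$ are killed by the zero of $\te(-q\ga)$ in $c_\pm(\ga)$, leaving the simple poles of $\psi_\ga$ at $(-q/b)q^\N$ together with the simple poles $(at_\pm)^{-1}q^\Z$ produced by $\te(aqt_\pm\ga)$ in the denominator of $c_\pm$. Reading off the denominator of $D$ in Lemma~\ref{lem:Phi+Phi-Casdet}, the poles of $D$ are precisely the simple poles at $(-q/b)q^\N\cup(at_-)^{-1}q^\Z\cup(at_+)^{-1}q^\Z$, so the poles of $D$ and of the numerator sit at the same places. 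At $(at_+)^{-1}q^\Z$ only $\Phi_\ga^+$ is singular (for generic parameters $(at_-)^{-1}q^\Z$ and $(at_+)^{-1}q^\Z$ are disjoint, since $t_-/t_+<0$), so the numerator has at most a simple pole there, cancelled by the simple pole of $D$; the same happens at $(at_-)^{-1}q^\Z$ with the roles of $\pm$ reversed. At $\ga\in(-q/b)q^\N$, however, both $\Phi_\ga^+$ and $\Phi_\ga^-$ are singular, so the numerator has a pole of order at most two while $D$ has only a simple pole, and the quotient retains at most a simple pole: this is the summand $(-q/b)q^\N$.

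Next I would read the zeroes of $D$ off the numerator in Lemma~\ref{lem:Phi+Phi-Casdet}. The factor $(-1/\ga;q)_\infty$ gives simple zeroes at $-q^\N$ and $\te(-abqt_-t_+\ga)$ gives simple zeroes at $(-1/abt_-t_+)q^\Z$; at both the numerator of $\cK$ is finite, so each contributes at most a simple pole, yielding the summands $-q^\N$ and $(-1/abt_-t_+)q^\Z$. (That these are genuine poles is visible at $\ga=-q^n$, where $c_\pm(-q^n)=0$ forces $\Phi_{-q^n}^\pm=(a,b;q)_\infty\phi_{-q^n}=(a,b;q)_\infty m_n\not\equiv0$ by \eqref{eq:qMeixnerfunctionpols}, the finiteness of $c_\pm$ there being guaranteed by the standing assumption that $t_\pm q^\Z$ avoid the zeroes of the denominator of $w$.) The remaining $\ga$-dependent factors combine as $(-a\ga;q)_\infty\,\te(-a\ga)=(-a\ga;q)_\infty^2(-q/a\ga;q)_\infty$, which has \emph{double} zeroes at $(-1/a)q^{\Z_{\leq0}}$ and \emph{simple} zeroes at $(-q/a)q^\N$. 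The simple zeroes at $(-q/a)q^\N$ meet a finite numerator, since there $\Phi_\ga^+$ reduces to the terminating series \eqref{eq:Phiterminatingseries}, and hence give at most simple poles, the summand $(-q/a)q^\N$.

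It remains to show that $(-1/a)q^{\Z_{\leq0}}$ does not contribute, and this is the step I expect to be the main obstacle, because here $D$ has a double zero and I must exhibit a matching double zero in the numerator. Using \eqref{eq:defPhi2} one sees that for $\ga\in(-1/a)q^{\Z_{\leq0}}$ the factor $(-a\ga;q)_\infty$ forces $\Phi_\ga^+\equiv0$ on $t_+q^\Z$; since $\Phi_\ga^+\in V_{\mu(\ga)}$, the bijectivity of the restriction maps in Lemma~\ref{lem:dimVmuR} upgrades this to $\Phi_\ga^+\equiv0$ on all of $t_-q^\Z\cup t_+q^\Z$, and the same argument with $t_-$ gives $\Phi_\ga^-\equiv0$. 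As both $\ga\mapsto\Phi_\ga^\pm(\cdot)$ are analytic near such $\ga$ and vanish there, each has a zero of order at least one, so their product has a zero of order at least two, cancelling the double zero of $D$. Throughout I would invoke the genericity hypothesis $a,b,b/a,abt_-t_+,at_-t_+,bt_-t_+\notin q^\Z$ (supplemented by $t_\pm\notin -q^\Z$) to ensure that the four families are pairwise disjoint and that all zeroes and poles above are simple. Collecting the four surviving families gives exactly $S_{sing}$, and analyticity on $\C\setminus S_{sing}$ follows because every candidate singularity has been accounted for.
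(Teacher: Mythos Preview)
Your proposal is correct and follows essentially the same approach as the paper's own proof: locate the poles of $\Phi_\ga^\pm$ and match them against the poles of $D$, then read off the zeroes of $D$ from Lemma~\ref{lem:Phi+Phi-Casdet} and use \eqref{eq:defPhi2} together with Lemma~\ref{lem:dimVmuR} to show that the double zero of $D$ at $(-1/a)q^{\Z_{\le 0}}$ is cancelled by matching zeroes of both $\Phi_\ga^+$ and $\Phi_\ga^-$. Your write-up is in fact somewhat more detailed than the paper's (you spell out the factorisation $(-a\ga;q)_\infty\te(-a\ga)=(-a\ga;q)_\infty^2(-q/a\ga;q)_\infty$ and the genericity hypotheses needed to keep the lattices disjoint), but the logic is identical.
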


\begin{proof}
Fix $x,y \in t_-q^\Z\cup t_+q^\Z$ and denote $K_\ga(x,y)$ by $\cK(\ga)$. Possible contributions to the poles of $\cK$ come from the poles of $\ga\mapsto \Phi_\ga^\pm(x)$, so possible simple poles are in $(1/at_-)q^\Z \cup (1/at_+)q^\Z$, and possible double poles are in $(-q/b)q^{\N}$. But $D$ also has simple poles in $(1/at_-)q^\Z \cup (1/at_+)q^\Z$, so $\cK$ has no poles in this set. Furthermore, $D$ has simple poles in $(-q/b)q^{\N}$, so $\cK$ also has simple poles in this set.

Other possible poles of $\cK$ come from the zeroes of $D$, so possible simple poles in $-q^{\N} \cup (-q/a)q^{\N} \cup  (-q/b)q^{\N} \cup (-1/abt_-t_+) q^\Z$, and possible double poles in $-(1/a)q^{-\N}$. From \eqref{eq:defPhi2} we see that both
$\ga\mapsto \Phi_\ga^+(t_+q^k)$ and $\ga\mapsto \Phi_\ga^-(t_-q^k)$ have simple zeroes in $-(1/a)q^{-\N}$. By Lemma \ref{lem:dimVmuR} the functions $\ga\mapsto \Phi_\ga^+(t_-q^k)$ and $\ga\mapsto \Phi_\ga^-(t_+q^k)$ are also zero on $-(1/a)q^{-\N}$. So we find that $\cK$ has only simple poles in $S_{sing}$, and is analytic on $\C\setminus S_{sing}$.
\end{proof}

The main step for the proof on Proposition \ref{prop:orthogonalityPhiganm} is to prove the following result.

\begin{lemma} \label{lem:RmuPhi}
Let $n \in \Z$. Define for $\mu \in \C$ such that $\ga_\mu=-(\mu/ab+1) \not\in S_{sing}$ the function $R_\mu\Phi_{\ga_n}$ on $t_-q^\Z \cup t_+q^\Z$ by
\[
(R_\mu\Phi_{\ga_n})(y) = \Big(\Phi_{\ga_n}, K_{\ga_\mu}(\,\cdot\,,y)\Big), \qquad y \in t_-q^\Z \cup t_+q^\Z,
\]
then
\[
(\Phi_{\ga_n},\Phi_{\ga_n}) = \frac{-1}{2\pi i} \int_{\mathcal C} (R_{\mu}\Phi_{\ga_n} , \Phi_{\ga_n}) d\mu.
\]
where $\mathcal C$ is a counterclockwise oriented, rectifiable contour that encircles $\mu(\ga_n)$ once, and no other points in $S_{sing}$.
\end{lemma}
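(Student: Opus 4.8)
The plan is to prove that $R_\mu$ acts on the eigenfunction $\Phi_{\ga_n}$ exactly as a resolvent, namely
\[
R_\mu\Phi_{\ga_n}\, =\, \frac{1}{\mu(\ga_n)-\mu}\,\Phi_{\ga_n},
\]
for all $\mu$ with $\ga_\mu\notin S_{sing}$, and then to extract $(\Phi_{\ga_n},\Phi_{\ga_n})$ by Cauchy's theorem. Throughout write $\ga=\ga_\mu$ and $\mu_n=\mu(\ga_n)$. Since $\ga_n\in(-1/abt_-t_+)q^\Z$, the discussion preceding the lemma gives $\Phi_{\ga_n}^+=\Phi_{\ga_n}^-=\Phi_{\ga_n}$, so that $\Phi_{\ga_n}$ is square $q$-integrable on all of $t_-q^\Z\cup t_+q^\Z$ and satisfies $L\Phi_{\ga_n}=\mu_n\Phi_{\ga_n}$, whereas $\Phi_\ga^\pm$ solve $L\Phi_\ga^\pm=\mu\Phi_\ga^\pm$.

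First I would substitute the explicit Green kernel and split the $q$-integral at the cut $x=y$,
\[
(R_\mu\Phi_{\ga_n})(y)=\frac{\Phi_\ga^+(y)}{D(\ga)}\int_{x\le y}\Phi_{\ga_n}\,\Phi_\ga^-\,w\,d_qx+\frac{\Phi_\ga^-(y)}{D(\ga)}\int_{x>y}\Phi_{\ga_n}\,\Phi_\ga^+\,w\,d_qx.
\]
On each half I would apply the pointwise Lagrange identity underlying the bilinear analogue of Lemma \ref{lem:trinprod(f,g)=D(f,g)}, namely
\[
(\mu_n-\mu)\,\Phi_{\ga_n}(x)\Phi_\ga^\pm(x)(1-q)x\,w(x)=D(\Phi_{\ga_n},\Phi_\ga^\pm)(x/q)-D(\Phi_{\ga_n},\Phi_\ga^\pm)(x),
\]
which holds because $A,B$ are the coefficients of $L$ and $L$ is symmetric for $(\,\cdot\,,\,\cdot\,)$. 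Each sum then telescopes to Casorati determinants at the boundary of its region of summation.

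Next I would discard the boundary contributions. At the ends at infinity ($t_\pm q^k$, $k\to-\infty$) the Casorati determinants vanish by the argument of Lemma \ref{lem:D(f,g)(infty)}, using \eqref{eq:asymptoticsPhi+-} and the square integrability of $\Phi_\ga^\pm$ and of $\Phi_{\ga_n}$. At $x=0$ the two limits $D(\Phi_{\ga_n},\Phi_\ga^-)(0^+)$ and $D(\Phi_{\ga_n},\Phi_\ga^-)(0^-)$ coincide — since $u(x)=(1-q)^2(-qx;q)_\infty/(-aqx,-bqx;q)_\infty\to(1-q)^2$ and all functions satisfy $f(0^+)=f(0^-)$, $f'(0^+)=f'(0^-)$ — and they enter the telescoping with opposite signs, so they cancel. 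What survives are the jump terms at $x=y$, giving
\[
(R_\mu\Phi_{\ga_n})(y)=\frac{\Phi_\ga^+(y)\,D(\Phi_{\ga_n},\Phi_\ga^-)(y/q)-\Phi_\ga^-(y)\,D(\Phi_{\ga_n},\Phi_\ga^+)(y/q)}{(\mu_n-\mu)\,D(\ga)}.
\]
Expanding the two Casorati determinants by their definition \eqref{eq:D(f,g)} and cancelling the cross terms, the numerator collapses to $\Phi_{\ga_n}(y)\,D(\Phi_\ga^+,\Phi_\ga^-)(y/q)=\Phi_{\ga_n}(y)\,D(\ga)$, where the last equality is the constancy of $D(\Phi_\ga^+,\Phi_\ga^-)$ from Lemma \ref{lem:dimVmuR}. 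Hence $R_\mu\Phi_{\ga_n}=(\mu_n-\mu)^{-1}\Phi_{\ga_n}$.

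Finally, pairing with $\Phi_{\ga_n}$ yields $(R_\mu\Phi_{\ga_n},\Phi_{\ga_n})=(\mu_n-\mu)^{-1}(\Phi_{\ga_n},\Phi_{\ga_n})$, a meromorphic function of $\mu$ whose only singularity inside $\mathcal C$ is the simple pole at $\mu=\mu_n$ with residue $-(\Phi_{\ga_n},\Phi_{\ga_n})$; since $\mathcal C$ runs counterclockwise once around it, $\frac{-1}{2\pi i}\int_{\mathcal C}(R_\mu\Phi_{\ga_n},\Phi_{\ga_n})\,d\mu=(\Phi_{\ga_n},\Phi_{\ga_n})$, which is the claim. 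The main obstacle I anticipate is the precise bookkeeping of the telescoping — in particular keeping the $q$-shift straight so that the surviving jump terms are the Casorati determinants at $y/q$ rather than at $y$, which is exactly what makes the three-term determinant cancellation produce $D(\ga)$; once the shift is correct, the remaining steps are routine.
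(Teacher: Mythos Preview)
Your argument is correct. The final Cauchy step is identical to the paper's, but you reach $(R_\mu\Phi_{\ga_n},\Phi_{\ga_n})=(\mu(\ga_n)-\mu)^{-1}(\Phi_{\ga_n},\Phi_{\ga_n})$ by a different route. The paper first establishes the abstract resolvent identity $(L-\mu)\bigl(R_\mu\Phi_{\ga_n}\bigr)=\Phi_{\ga_n}$ (by the same telescoping computation you describe, referred to \cite[Prop.~6.1]{KoelS}) and then moves $(L-\mu)$ across the pairing using the symmetry of $L$ with respect to $(\,\cdot\,,\,\cdot\,)$; this yields only the scalar relation for the \emph{pairing}. You instead carry the telescoping all the way and exploit that $\Phi_{\ga_n}$ is itself an eigenfunction, obtaining the stronger \emph{pointwise} identity $R_\mu\Phi_{\ga_n}=(\mu(\ga_n)-\mu)^{-1}\Phi_{\ga_n}$ via the three-term Casorati cancellation that produces $D(\Phi_\ga^+,\Phi_\ga^-)=D(\ga)$. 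Your route is more explicit and self-contained (no appeal to an external resolvent lemma and no separate use of the symmetry of $L$), while the paper's route is shorter on the page because it packages the telescoping into the cited result. Your anticipated bookkeeping with the $q$-shift is handled correctly: the surviving boundary contributions are indeed at $y/q$, and the cancellation at $0^\pm$ goes through exactly because $\Phi_{\ga_n},\Phi_\ga^\pm\in V_{\mu(\cdot)}$.
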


\begin{proof}
First of all, we have $(L-\mu) (R_\mu\Phi_{\ga_n})  = \Phi_{\ga_n}$ as an identity on $t_-q^\Z\cup t_+q^\Z$. This is proved similarly as \cite[Prop. 6.1]{KoelS}. Now we find
\[
(\Phi_{\ga_n},\Phi_{\ga_n}) = \big( (L-\mu) (R_\mu\Phi_{\ga_n}) ,\Phi_{\ga_n}\big)= \big( R_\mu\Phi_{\ga_n}, (L-\mu)(\Phi_{\ga_n})\big)= (\ga_n-\mu)\big(R_\mu\Phi_{\ga_n},\Phi_{\ga_n}\big).
\]
This gives us
\[
(\Phi_{\ga_n},\Phi_{\ga_n}) = \frac{1}{2\pi i} \int_{\mathcal C} \frac{ (\Phi_{\ga_n},\Phi_{\ga_n}) }{ \mu-\ga_n} d\mu= \frac{-1}{2\pi i} \int_{\mathcal C} (R_\mu\Phi_{\ga_n},\Phi_{\ga_n})d\mu.
\]
where $\mathcal C$ is a contour as described in the lemma.
\end{proof}
\begin{proof}[Proof of Proposition \ref{prop:orthogonalityPhiganm}]
We use Lemma \ref{lem:RmuPhi}, where we write out the inner product inside the contour integral as a double $q$-integral, and we apply dominated convergence, then
\[
\begin{split}
(\Phi_{\ga_n},\Phi_{\ga_n})& =
ab \Res{\ga' = \ga_n}\frac{1}{D(\ga')} \\
&\times \iint \limits_{x \leq y} \Phi_{\ga_n}(x) \Phi_{\ga_n}(y) \Big(\Phi_{\ga_n}(x)\Phi_{\ga_n}(y)+\Phi_{\ga_n}(y)\Phi_{\ga_n}(x)\Big)\big(1-\hf\de_{xy}\big) w(x) w(y) d_qx\,d_qy.
\end{split}
\]
Symmetrizing the double $q$-integral then gives
\[
(\Phi_{\ga_n},\Phi_{\ga_n}) =
ab \Res{\ga' = \ga_n}\frac{1}{D(\ga')}(\Phi_{\ga_n},\Phi_{\ga_n})^2.
\]
Proposition \ref{prop:orthogonalityPhiganm} now follows after evaluating the residue.
\end{proof}
The orthogonality relations from Section \ref{ssec:directproofsonR} can be proved in the same way as Proposition \ref{prop:orthogonalityPhiganm}.


\section{Limit transitions}\label{sec:limittransitions}

Indeterminate moment problems in the $q$-Askey scheme have been
studied by Christiansen \cite{Chri}, and quite a few of the
cases in \cite{Chri} have been studied using related techniques. We
are inspired by the scheme \cite[p.24]{Chri} in discussing the
limit transitions.

\subsection{Limit from continuous
dual $q^{-1}$-Hahn polynomials}
The study of the big $q$-Jacobi function transform \cite{KoelS} leads to
an explicit orthogonality measure for the continuous
dual $q^{-1}$-Hahn polynomials, which are at the top of
the indeterminate moment problems in \cite{Chri}. In the
big $q$-Jacobi functions
\[
\tilde\phi_{\tilde\ga} (x;\tilde a, \tilde b, \tilde c;q)\, = \,
\rphis{3}{2}{\tilde a \tilde\ga, \tilde a/\tilde\ga, -1/x}{\tilde a\tilde b, \tilde a\tilde c}{q, -\tilde b\tilde c x}
\]
we substitute $\tilde \ga = -\tilde a \ga$, $\tilde a\tilde b = a$,
$\tilde a\tilde c= b$ and we let $\tilde b\to 0$. Then the big $q$-Jacobi function
tends to the $q$-Meixner function \eqref{eq:phi=2phi2}. Also, after multiplying
by $\tilde b \tilde c$, the operator \cite[(2.2-3)]{KoelS}
tends to $L$ defined by \eqref{eq:defL}. In this formal limit for the eigenvalue
equation we see that the continuous spectrum in \cite{KoelS} shrinks to zero,
the finite part of the discrete spectrum of  \cite{KoelS} tends to the spectrum $-q^\N$ and
the semifinite discrete part of the spectrum of \cite{KoelS} tends to doubly infinite
discrete spectrum $q^\Z/abt$ for the $q$-Meixner functions. Note that the
limit case discussed in this paper is self-dual, whereas the big $q$-Jacobi transform
is not self-dual.
The conditions
\cite[(2.1)]{KoelS} on  the parameters for the big $q$-Jacobi functions
lead to $0<a,b<1$, that is Condition \ref{cond:onparameters}(ii).

\subsection{Limit to $q$-Laguerre polynomials}
The $q$-Laguerre polynomials are a well-known set of orthogonal polynomials
corresponding to an indeterminate moment problem, see \cite{CiccKK}, \cite{GaspR}
and references given there. One of the standard orthogonality relations
is related to Ramanujan's ${}_1\psi_1$-summation, which can be viewed
as a $q$-integral over $tq^\Z$. Replacing $x$ and $a$ in
$xc$ and $a/c$ and letting $c\to \infty$ we find
that \eqref{eq:phi=2phi1} with $x\leftrightarrow\ga$ by self-duality tends
to
${}_1\vp_1(-1/\ga; b;q, abx\ga)$
which are the functions studied in \cite{CiccKK}. Then in the limit the support
of the orthogonality measure reduces to a constant times $q^\Z$, and the
structure of the spectrum remains unchanged, so $-q^\N$ corresponds to
the $q$-Laguerre polynomials and the constant times $q^\Z$ corresponds to
the big $q$-Bessel functions of \cite{CiccKK}. The limit in the
eigenvalue equation reduces to the operator studied in \cite{CiccKK}.
A classical limit then also leads from the $q$-Laguerre polynomials back
to the Stieltjes-Wigert polynomials, see \cite{Chri}.

\subsection{$q$-Charlier polynomials and Al-Salam--Carlitz II polynomials}
For the indeterminate moment problems for the
$q$-Charlier polynomials and Al-Salam--Carlitz II polynomials
have not be studied by this method, so that the formal limit transition is
not known. We refer to \cite{Chri} for more information and
references.



\begin{thebibliography}{99}


\bibitem{Akhi} N.I.~Akhiezer, \emph{The classical moment problem and some related questions in analysis}, Hafner, 1965.


\bibitem{Chri} J.S.~Christiansen,  \emph{Indeterminate moment problems within the
Askey-scheme}, PhD thesis, University of Copenhagen, 2004.

\bibitem{ChriK-JAT} J.S.~Christiansen, E.~Koelink, \emph{Self-adjoint difference operators and classical solutions to the Stieltjes-Wigert moment problem},  J. Approx. Theory  \textbf{140}  (2006), 1--26.

\bibitem{ChriK-CA} J.S.~Christiansen, E.~Koelink, \emph{Self-adjoint difference operators and symmetric Al-Salam--Chihara polynomials},  Constr. Approx.  \textbf{28}  (2008), 199--218.

\bibitem{CiccKK} N.~Ciccoli, E.~Koelink, T.H.~Koornwinder,
\emph{$q$-Laguerre polynomials and big $q$-Bessel functions and their orthogonality relations}, Methods Appl. Anal.  \textbf{6}  (1999), 109--127.

\bibitem{DunfS} N.~Dunford, J.T.~Schwartz, \emph{Linear operators II}, Interscience, 1963.

\bibitem{GaspR} G.~Gasper, M.~Rahman, \emph{Basic hypergeometric series}, 2nd ed., Cambridge Univ. Press,
2004.

\bibitem{Groe} W.~Groenevelt,
\emph{The vector-valued big q-Jacobi transform}, Constr. Approx. \textbf{29} (2009), 85--127.

\bibitem{GroeKK} W.~Groenevelt, E.~Koelink, J.~Kustermans, \emph{The dual quantum group for the quantum group analogue of the normalizer of $SU(1,1)$ in $SL(2,\C)$}, \texttt{arXiv:0905.2830}, IMRN, to appear.

\bibitem{Isma}  M.E.H.~Ismail, \emph{Classical and quantum orthogonal polynomials in one variable},
Cambridge Univ. Press, 2005.


\bibitem{Kjel} T.H.~Kjeldsen, \emph{The early history of the moment problem},
Historia Math.  \textbf{20}  (1993),  19--44.

\bibitem{KoekS} R.~Koekoek, R.F.~Swarttouw, \emph{The Askey-scheme of hypergeometric orthogonal polynomials and its $q$-analogue}, Report
98-17, Technical University Delft, 1998.

\bibitem{KoelLaredo} E.~Koelink, \emph{Spectral theory and special functions}, p.~45--84 in \emph{Laredo Lectures on
Orthogonal Polynomials and Special Functions},
(eds. R. \'Alvarez-Nodarse, F. Marcell\'an, W. Van Assche),
Nova Sci. Publ., 2004.


\bibitem{KoelS} E.~Koelink, J.V.~Stokman, \emph{The big $q$-Jacobi function transform},  Constr. Approx. \textbf{19} (2003), 191--235.

\bibitem{Stie} T.J.~Stieltjes, \emph{Recherches sur les fractions continues},
Ann. Fac. Sci. Toulouse \textbf{8} (1894), J.1--122, \textbf{9} (1895), A.1--47,
pp. 406--563 in Vol. II,  \emph{\OE uvres compl\`etes/Collected papers}, Springer, 1993.

\bibitem{VileK} N.J.~Vilenkin, A.U.~Klimyk, \emph{Representation of Lie groups and special functions},
Vol I, Kluwer, 1991.


\end{thebibliography}
\end{document}